\newcounter{dummy} \numberwithin{dummy}{section}
\newtheorem{theorem}[dummy]{Theorem}
\newtheorem*{theorem*}{Theorem}
\newtheorem{conjecture}[dummy]{Conjecture}
\newtheorem{lemma}[dummy]{Lemma}
\newtheorem{cor}[dummy]{Corollary}
\newtheorem{prop}[dummy]{Proposition}
\theoremstyle{remark}
\newtheorem{remark}[dummy]{Remark}
\newcommand{\fdavis}{\Phi}
\newcommand{\fchamber}{K^f}
\newcommand{\Q}{\mathbf{q}}
\DeclareMathOperator{\vcd}{vcd}
\date{}
\newcommand{\vertex}{\node[vertex]}
\title[Fattened Davis complex and weighted $L^2$-(co)homology]{The fattened Davis complex and weighted $L^2$-(co)homology of Coxeter groups}
\author{Wiktor J. Mogilski}
\address{Department of Mathematical Sciences, University of Wisconsin-Milwaukee, Milwaukee, WI 53211, USA}
\email{mogilski@uwm.edu}
\begin{document}

\begin{abstract}
Associated to a Coxeter system $(W,S)$ there is a contractible simplicial complex $\Sigma$ called the Davis complex on which $W$ acts properly and cocompactly by reflections. Given a positive real multiparameter $\Q$, one can define the weighted $L^2$-(co)homology groups of $\Sigma$ and associate to them a nonnegative real number called the weighted $L^2$-Betti number. Not much is known about the behavior of these groups when $\Q$ lies outside a certain restricted range, and weighted $L^2$-Betti numbers have proven difficult to compute. In this article we propose a program to compute the weighted $L^2$-(co)homology of $\Sigma$ by considering a thickened version of this complex. The program proves especially successful provided that the weighted $L^2$-(co)homology of certain infinite special subgroups of $W$ vanishes in low dimensions. We then use our complex to perform computations for many examples of Coxeter groups, in most cases providing explicit formulas for the weighted $L^2$-Betti numbers.
\end{abstract}
\maketitle
\section{Introduction}
Given a Coxeter system $(W,S)$ with nerve $L$, Davis defines a contractible simplicial complex $\Sigma_L$ on which $W$ acts properly and cocompactly. We provide the definition in Section $2$, but more details can be found in \cite{Davis,Davis1}. Given an $S$-tuple $\Q=(q_s)_{s\in S}$ of positive real numbers, where $q_s=q_{s'}$ if $s$ and $s'$ are conjugate in $W$, one defines the weighted $L^2$-(co)chain complex $L^2_\Q C_\ast(\Sigma_L)$ and the weighted $L^2$-(co)homology spaces $L^2_\Q H_k(\Sigma_L)$ (see \cite{DDJO}). They are special in the sense that they admit a notion of dimension: one can attach a nonnegative real number to each of the Hilbert spaces $L^2_\Q H_k(\Sigma_L)$ called the von Neumann dimension. Hence one can define weighted $L^2$-Betti numbers. We present a brief introduction to this theory in this article, but more details can be found in \cite{Davis,DDJO,Dymara}.

In \cite{DDJO}, weighted $L^2$-(co)homology was explicitly computed for CW-complexes on which a Coxeter group acts properly and cocompactly by reflections whenever $\Q\in\bar{\mathcal{R}}\cup\bar{\mathcal{R}}^{-1}$, where $\mathcal{R}$ denotes the region of convergence of the growth series of the Coxeter group. These formulas generalize those of Dymara \cite{Dymara} for $\Sigma_L$ and also compute the ordinary $L^2$-(co)homology of buildings of type $(W,S)$ with large integer thickness vectors. Furthermore, the Weighted Singer Conjecture (see Conjecture \ref{conj:Singer}) is proved in dimension less than or equal to $4$ when the Coxeter group is assumed to be right-angled, and more generally, it is proved that the odd dimensional Weighted Singer Conjecture implies the even dimensional Weighted Singer Conjecture. Weighted $L^2$-Betti numbers have proved difficult to compute in general, very little being known when $\Q\notin\bar{\mathcal{R}}\cup\bar{\mathcal{R}}^{-1}$. The aim of this article is to propose a method to compute them.

The results of this article can be summarized as follows.

\begin{itemize}
  \item We construct complex, called the fattened Davis complex, that serves as a successful program for computing the weighted $L^2$-(co)homology of many Coxeter groups.
  \item We observe that it is possible to use any acyclic complex on which the Coxeter group acts properly and cocompactly by reflections to compute the weighted $L^2$-Betti numbers of Coxeter groups. In particular, we can use a $\vcd W$-dimensional complex of Bestvina \cite{Bestvina} for computations.
  \item Using the above tools, we compute the weighted $L^2$-(co)homology for many classes Coxeter groups, in most cases providing explicit formulas for the weighted $L^2$-Betti numbers. Of note is that mostly all of the computations are performed for $\Q\geq\mathbf{1}$, and hence they compute the ordinary $L^2$-(co)homology of buildings associated to these Coxeter groups with thickness vector $\Q$.
  \item We prove a version of the Weighted Singer Conjecture in dimensions three and four for the case where $\Sigma_L$ is a manifold with (nonempty) boundary. In dimension four, we also prove a special case of the conjecture where the Coxeter group is $2$-spherical.
\end{itemize}

The article is structured as follows. We first introduce some basic notions and definitions, and then we proceed to construct what we call the fattened Davis complex. The idea is to ``fatten'' $\Sigma_L$ to a (homology) manifold with boundary so that we have standard algebraic topology tools (such as Poincar\'{e} duality) at our disposal. We carefully perform this fattening in such a way so that we can understand the weighted $L^2$-(co)homology of the boundary. In fact, understanding the weighted $L^2$-(co)homology of the boundary will simply amount to understanding the weighted $L^2$-(co)homology of certain non-spherical special subgroups of $W$. A large portion of the article will be dedicated to studying the structure and algebraic topology of the fattened Davis complex.

We then perform computations for many examples of Coxeter groups. For the purpose of stating our theorems, we label the edges of the nerve as follows, resulting in what we call the \emph{labeled nerve}. The vertices of the nerve are the generators for the Coxeter system, and on each edge we put the corresponding Coxeter relation between the generators on that edge. This allows us to recover the Coxeter system (up to isomorphism) by reading the presentation from the one-skeleton of the nerve.

We first restrict our attention to the case where the nerve $L$ of the Coxeter group is a graph. Suppose that the labeled nerve $L$ is the one-skeleton of an $n$-dimensional cell complex $\Lambda$, where $n\geq 2$. We say that a cell of labeled cell complex $\Lambda$ is \emph{Euclidean} if the corresponding special subgroup generated by the vertices of that cell is a Euclidean reflection group. With this terminology, we state the first main theorem.

\begin{theorem*}[Theorem \ref{thm:l2cellulationghs}]
Suppose that the labeled nerve $L$ is the one-skeleton of a cell complex that is a generalized homology $n$-sphere, $n\geq 2$, where all $2$-cells are Euclidean. If $\Q\geq\mathbf{1}$ then $L_\Q^2b_\ast(\Sigma_L)$ is concentrated in degree $2$.
\end{theorem*}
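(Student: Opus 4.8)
\emph{Proof strategy.} The plan is to transfer the computation from $\Sigma_L$ to the fattened Davis complex $\fdavis$ built from $\Lambda$, where Poincar\'{e}--Lefschetz duality becomes available, and then to feed the Euclidean hypothesis in through the boundary $\partial\fdavis$. Because $\Lambda$ is a generalized homology $n$-sphere, $\fdavis$ is a generalized homology $(n+1)$-manifold with boundary on which $W$ acts properly and cocompactly, and the construction is arranged so that $\fdavis$ is $W$-equivariantly homotopy equivalent to $\Sigma_L$; hence $L^2_\Q b_k(\fdavis)=L^2_\Q b_k(\Sigma_L)$ for all $k$ and all $\Q$. Since $\Sigma_L$ is $2$-dimensional, $L^2_\Q b_k(\Sigma_L)=0$ for $k\ge 3$ automatically, and since every $2$-cell is Euclidean the group $W$ contains an infinite special subgroup and is therefore infinite, so $L^2_\Q b_0(\Sigma_L)=0$ for $\Q\ge\mathbf 1$. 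Thus the whole content of the theorem is the single vanishing statement $L^2_\Q b_1(\Sigma_L)=0$ for $\Q\ge\mathbf 1$.

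To obtain this I would first apply weighted Poincar\'{e}--Lefschetz duality on the $(n+1)$-dimensional homology manifold-with-boundary $\fdavis$, which in the weighted setting inverts the multiparameter:
\[
L^2_\Q b_1(\Sigma_L)=L^2_\Q b^1(\fdavis)=L^2_{\Q^{-1}}\,b_{n}(\fdavis,\partial\fdavis).
\]
As $\Q\ge\mathbf 1$, the relevant parameter on the right is $\Q^{-1}\le\mathbf 1$. Running the (weakly exact) long exact sequence of the pair $(\fdavis,\partial\fdavis)$ in weighted $L^2$-homology at the parameter $\Q^{-1}$, and using $L^2_{\Q^{-1}}H_{n}(\fdavis)\cong L^2_{\Q^{-1}}H_{n}(\Sigma_L)=0$ (valid as soon as $n\ge 3$, since $\Sigma_L$ is $2$-dimensional), one gets $L^2_{\Q^{-1}}b_{n}(\fdavis,\partial\fdavis)\le L^2_{\Q^{-1}}b_{n-1}(\partial\fdavis)$, so it suffices to prove $L^2_{\Q^{-1}}b_{n-1}(\partial\fdavis)=0$ for all $\Q\ge\mathbf 1$. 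The case $n=2$ is genuinely borderline: there $L^2_{\Q^{-1}}H_2(\fdavis)\cong L^2_{\Q^{-1}}H_2(\Sigma_L)$ need not vanish, and the reduction instead comes down to the vanishing of the top weighted $L^2$-(co)homology of the $2$-complex $\Sigma_L$ for parameters $\le\mathbf 1$, which must be handled by a direct argument on the weighted $L^2$-cochains, again using that the $2$-cells are Euclidean.

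The substantial step is the boundary. The analysis of $\partial\fdavis$ developed in the preceding sections identifies its weighted $L^2$-(co)homology in terms of that of the Davis complexes $\Sigma_{L_T}$ of the non-spherical special subgroups $W_T$ attached to the cells of $\Lambda$. This is precisely where the hypothesis enters: for each $2$-cell $T$ the subgroup $W_T$ is a Euclidean reflection group, hence virtually abelian, so its growth series converges on $\{\,\Q<\mathbf 1\,\}$ and its closed region of convergence contains every multiparameter $\le\mathbf 1$; by the computations of Dymara \cite{Dymara} and of \cite{DDJO} it follows that $L^2_{\Q^{-1}}H_*(\Sigma_{L_T})$ is concentrated in degree $0$ whenever $\Q\ge\mathbf 1$. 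Assembling these contributions through the decomposition of $\partial\fdavis$ then forces $L^2_{\Q^{-1}}H_*(\partial\fdavis)$ to vanish in degree $n-1$, which closes the argument. The chief obstacle in executing this plan is constructing $\fdavis$ so that it genuinely is a homology manifold whose boundary has weighted $L^2$-(co)homology splitting cell-by-cell over $\Lambda$ as described; this, together with the $n=2$ endgame, is where the real work lies.
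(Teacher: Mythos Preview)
Your overall architecture is right and is essentially Poincar\'{e}-dual to the paper's route: the paper runs the long exact sequence of the pair at $\Q$ in degree $1$ and proves $L^2_\Q b_1(\partial\fdavis_L)=0$ by induction on $n$, while you run it at $\Q^{-1}$ in degree $n$ and aim for $L^2_{\Q^{-1}}b_{n-1}(\partial\fdavis_L)=0$. Since $\partial\fdavis_L$ is a closed homology $n$-manifold these targets are equivalent. But there are two connected gaps in your plan. First, for $n\ge 3$ the poset $\mathcal{N}_P$ contains the vertex sets of \emph{all} cells of $\Lambda$ of dimension $\ge 2$, not only the $2$-cells. In the spectral sequence the term $E_1^{n-3,2}$ picks up chains whose minimum is a $3$-cell $T$, and there you need $L^2_{\Q^{-1}}b_2(\Sigma_{L_T})=0$, where $L_T$ is the $1$-skeleton of the boundary of a $3$-cell; the group $W_T$ is not Euclidean, so the Euclidean hypothesis alone does not dispose of this term. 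The paper handles the analogous issue (in its dual form $L^2_\Q b_1(\Sigma_{L_T})=0$) by induction on $n$, the base case being precisely $n=2$.

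Second, and this is the crux, the $n=2$ case cannot be closed by a ``direct argument on the weighted $L^2$-cochains, again using that the $2$-cells are Euclidean.'' What is actually needed is $L^2_{\Q'}b_2(\Sigma_L)=0$ for all $\Q'\le\mathbf 1$ whenever $L$ is the $1$-skeleton of a cellulation of $S^2$; this is Lemma~\ref{lemma:l2spherecellulations}, and its proof does not use the Euclidean hypothesis at all. One cones off the $2$-cells (with right-angled labels) to produce a nerve $L'$ triangulating $S^2$, so that $\Sigma_{L'}$ is a closed $3$-manifold, and then invokes the $3$-dimensional Singer conjecture (Lott--L\"uck together with Geometrization) to get $L^2_{\mathbf 1}b_2(\Sigma_{L'})=0$; a Mayer--Vietoris bookkeeping gives $L^2_{\mathbf 1}b_2(\Sigma_L)=0$, and Lemma~\ref{lemma:pushingupcycles} (using $\vcd W=2$) pushes this to all $\Q'\le\mathbf 1$. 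Without this deep $3$-manifold input your argument does not close, either at $n=2$ or, through the $3$-cells in the boundary, for larger $n$.
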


We also provide an explicit formula for $L_\Q^2b_2(\Sigma_L)$. For the special case when $n=2$, we explicitly compute the $L^2_\Q$-Betti numbers for \emph{all} $\Q$, not just for $\Q\geq\mathbf{1}$. We then derive various corollaries from this theorem, noting that if we place some restrictions on either the labels or the cell complex, then the formulas for the $L_\Q^2$-Betti numbers become relatively simple. We then discuss how Theorem \ref{thm:l2cellulationghs} can be used to produce Coxeter groups which satisfy the Weighted Singer Conjecture in dimensions three and four (see Theorem \ref{thm:singerghs}).

We then turn our attention to a class of Coxeter groups which in literature are sometimes called quasi-L\'{a}nner groups.

\begin{theorem*}[Theorem \ref{thm:l2QL}]
Suppose that $W$ acts properly but not cocompactly on hyperbolic space $\mathbb{H}^n$ by reflections with fundamental chamber an $n$-simplex of finite volume. Then $L_\Q^2b_k(\Sigma_L)=0$ whenever $k\geq n-1$ and $\Q\leq\mathbf{1}$, or $k\leq 1$ and $\Q\geq\mathbf{1}$.
\end{theorem*}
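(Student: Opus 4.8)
The plan is to push the fattened Davis complex through the machinery assembled above. First I would record the combinatorial input. Since the fundamental chamber is an $n$-simplex, $|S|=n+1$; a special subgroup generated by the vertices of a face meeting $\mathbb{H}^n$ in an ordinary point is spherical, whereas for $t$ an ideal vertex of the simplex the facet subgroup $W_{S\setminus\{t\}}$ is an irreducible affine Coxeter group of rank $n$, acting properly and cocompactly by reflections on its Davis complex $\Sigma_{L_{S\setminus\{t\}}}\cong\mathbb{E}^{n-1}$. Thus the non-spherical proper special subgroups are precisely these facet subgroups, and $\vcd W=n-1$. Applying the fattening construction to $(W,S)$ produces $\fdavis$, a $W$-cocompact contractible homology $n$-manifold with boundary that is equivariantly homotopy equivalent to $\Sigma_L$ --- so $L^2_\Q H_k(\fdavis)\cong L^2_\Q H_k(\Sigma_L)$ for all $k$ and $\Q$ --- and whose boundary $\partial\fdavis$ is, up to collars, $\bigsqcup_t\, W\times_{W_{S\setminus\{t\}}}\mathbb{E}^{n-1}$, the union taken over the ideal vertices $t$.

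Next I would compute the weighted $L^2$-Betti numbers of $\partial\fdavis$. The key point is that each facet subgroup is affine, and the growth series of an irreducible affine Coxeter group converges on all of $(0,\mathbf 1)$; hence $(0,\mathbf 1]$ lies in the closure of its region of convergence, so $\mathbf 1$ lies in $\bar{\mathcal R}\cap\bar{\mathcal R}^{-1}$ for that subgroup and the Davis--Dymara--Januszkiewicz--Okun computation applies on both sides of $\mathbf 1$. It gives that $L^2_\Q H_\ast(\mathbb{E}^{n-1})$ is concentrated in degree $0$ for $\Q\leq\mathbf 1$, and --- using also weighted Poincar\'e duality for the closed orbifold $\mathbb{E}^{n-1}/W_{S\setminus\{t\}}$ --- in degree $n-1$ for $\Q\geq\mathbf 1$. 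Carrying this through the induced-module description of $\partial\fdavis$, we obtain $L^2_\Q H_k(\partial\fdavis)=0$ for all $k\geq 1$ when $\Q\leq\mathbf 1$, and $L^2_\Q H_k(\partial\fdavis)=0$ for all $k\leq n-2$ when $\Q\geq\mathbf 1$.

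Now I would combine duality with the long exact sequence of the pair $(\fdavis,\partial\fdavis)$. Weighted Poincar\'e--Lefschetz duality for the homology manifold $\fdavis$ gives $L^2_\Q H_k(\fdavis)\cong L^2_{\Q^{-1}}H^{\,n-k}(\fdavis,\partial\fdavis)$; taking $k=n$ and using that $\fdavis$ is connected with nonempty boundary yields $L_\Q^2b_n(\Sigma_L)=0$ for every $\Q$, while $L_\Q^2b_0(\Sigma_L)=0$ whenever $W(\Q)=\infty$, in particular for $\Q\geq\mathbf 1$. For the remaining two degrees I would use the hypothesis $\vcd W=n-1$: fatten a $(n-1)$-dimensional acyclic $W$-cocompact reflection complex (such as Bestvina's) to a homology $(n-1)$-manifold with boundary, still equivariantly acyclic with the same weighted $L^2$-homology as $\Sigma_L$ and with boundary a nonempty union of induced affine Davis complexes; Poincar\'e--Lefschetz duality for it in top degree $n-1$ then gives $L_\Q^2b_{n-1}(\Sigma_L)=0$ for all $\Q$, which already disposes of the range $k\geq n-1$, $\Q\leq\mathbf 1$ (together with $L_\Q^2b_n=0$). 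Finally, when $\Q\geq\mathbf 1$ the vanishing $L^2_\Q H_0(\partial\fdavis)=L^2_\Q H_1(\partial\fdavis)=0$ from the previous step collapses the long exact sequence to $L^2_\Q H_1(\fdavis)\cong L^2_\Q H_1(\fdavis,\partial\fdavis)\cong L^2_{\Q^{-1}}H^{\,n-1}(\fdavis)$, whose von Neumann dimension equals $L_{\Q^{-1}}^2b_{n-1}(\Sigma_L)=0$; with $L_\Q^2b_0(\Sigma_L)=0$ this settles the range $k\leq 1$, $\Q\geq\mathbf 1$.

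The substantial work, carried out in the preceding sections, is the construction of $\fdavis$ with boundary identified exactly as induced affine Davis complexes, and the weighted forms of Poincar\'e--Lefschetz duality and of the long exact sequence of a pair for homology manifolds with boundary. The most delicate point in the argument above is the step using the $\vcd W$-dimensional model: one must check that such a model admits a fattening to a homology $(n-1)$-manifold with (necessarily nonempty) boundary, since it is this lower-dimensional duality --- and not anything internal to $\fdavis$ --- that turns the isomorphisms produced by the long exact sequence into genuine vanishing rather than a relation among Betti numbers; here the fact that all non-spherical special subgroups are affine is what keeps the relevant links simple enough. The smallest values of $n$, where the concentration degree $n-1$ of the affine boundary collides with degree $1$ and the fattening of the $\vcd W$-dimensional model degenerates, must be handled separately, and are exactly the cases covered by the explicit two-dimensional computations obtained earlier.
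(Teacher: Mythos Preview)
Your overall architecture for the $\Q\geq\mathbf 1$ half matches the paper: build $\fdavis_L$ from $P=\Delta^n$, identify $\partial\fdavis_L$ as induced from Euclidean special subgroups, use Theorem~\ref{thm:weightedeuclidean} to kill $L_\Q^2H_1(\partial\fdavis_L)$, and then feed $L_{\Q^{-1}}^2b_{n-1}(\Sigma_L)=0$ into the long exact sequence plus duality to obtain $L_\Q^2b_1(\Sigma_L)=0$. That part is fine.

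The gap is in how you obtain $L_\Q^2b_{n-1}(\Sigma_L)=0$ for $\Q\leq\mathbf 1$, which is both part of the conclusion and the input you need for the other half. You propose to ``fatten a $(n-1)$-dimensional acyclic $W$-cocompact reflection complex \dots\ to a homology $(n-1)$-manifold with boundary'' and then invoke top-degree Poincar\'e--Lefschetz duality. But the fattening construction of this paper does not do this: it takes the nerve $L$, embeds it in a $GHS^{m-1}$ with $m-1>\dim L$, and produces an $m$-dimensional homology manifold with boundary. It never keeps the dimension fixed, and for a $QL_n$ group the nerve typically has dimension $n-2$ or $n-1$, so the smallest fattening available is the $n$-dimensional one you already used. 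There is no mechanism here for turning Bestvina's $(n-1)$-dimensional complex into an $(n-1)$-dimensional homology manifold with boundary, and your own caveat (``one must check'') is exactly the missing step. Without it you have no argument for $L_\Q^2b_{n-1}=0$ when $\Q\leq\mathbf 1$, and the claim that it vanishes \emph{for all} $\Q$ is stronger than anything established.

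The paper closes this gap by a completely different route: at $\Q=\mathbf 1$ it invokes Cheeger--Gromov \cite{CheegerGromov} to identify $L_{\mathbf 1}^2H_\ast(\Sigma_L)$ with the $L^2$ de Rham cohomology of $\mathbb H^n$ (using that $W$ acts with finite-covolume), and then Dodziuk's computation \cite{Dodziuk} to conclude $L_{\mathbf 1}^2b_{n-1}(\Sigma_L)=0$. From there Lemma~\ref{lemma:pushingupcycles} pushes this vanishing to all $\Q\leq\mathbf 1$. So the genuine analytic input here is the $L^2$-cohomology of hyperbolic space, not a second fattening; your remark that small $n$ ``must be handled separately'' is also unnecessary, since the paper's argument is uniform in $n$.
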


For $n=3$, we can use this theorem to explicitly compute formulas for the $L^2_\Q$-Betti numbers for all $\Q$: they are always concentrated in a single dimension.

Recall that a Coxeter system is \emph{$2$-spherical} if the one-skeleton of the corresponding nerve is a complete graph. In other words, this is equivalent to saying that for any distinct $s,t\in S$ we have the Coxeter relation $(st)^{m_{st}}=1$, where $m_{st}\geq 2$ is a finite natural number. A Coxeter group is \emph{Euclidean} if it acts properly and cocompactly by reflections on a Euclidean space of some dimension. We then perform computations for $2$-spherical Coxeter groups whose corresponding nerve is not necessarily a graph.

\begin{theorem*}[Theorem \ref{thm:genl2kn}]
Suppose that $(W,S)$ is infinite $2$-spherical with $|S|\geq 5$. Suppose furthermore that:
\begin{enumerate}
  \item For every $T\subseteq S$ with $|T|\geq 5$, $\vcd W_T\leq |T|-2$.
  \item Every infinite subgroup $W_T$, with $|T|=3,4$, is Euclidean or quasi-L\'{a}nner.
\end{enumerate}
If $\Q\geq\mathbf{1}$ then $L_\Q^2b_k(\Sigma_{L})=0 \text{ for } k<2$.
\end{theorem*}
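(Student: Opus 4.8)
\emph{Proof sketch.} We handle the two degrees separately. For $k=0$ the statement is immediate: $W$ is infinite and $\Q\ge\mathbf 1$, so $\Q$ lies outside the region $\mathcal R$ of convergence of the growth series of $W$, and hence $L^2_\Q H_0(\Sigma_L)=0$ by the computation of weighted $L^2$-homology in degree zero \cite{DDJO}. All the work is in the case $k=1$, and here the plan is to pass to the fattened Davis complex $\fdavis$. Recall that $\fdavis$ is an acyclic homology manifold with boundary, of dimension $d=|S|$, on which $W$ acts properly and cocompactly by reflections; since weighted $L^2$-(co)homology may be computed on any cocompact acyclic $W$-complex, $L^2_\Q H_\ast(\fdavis)=L^2_\Q H_\ast(\Sigma_L)$, and it is enough to prove $L^2_\Q H_1(\fdavis)=0$. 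We will also use the central structural fact about $\fdavis$: its boundary $\partial\fdavis$ is built, via a $W$-invariant cover and the Mayer--Vietoris techniques of \cite{DDJO}, out of the Davis complexes $\Sigma_{L_T}$ of the \emph{infinite} special subgroups $W_T$ with $T\subsetneq S$, and in such a way that $L^2_\Q H^{d-2}(\partial\fdavis)=0$ follows as soon as $L^2_\Q H^i(\Sigma_{L_T})=0$ for every such $T$ and every $i\ge|T|-1$.

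The plan is then to apply weighted Poincar\'e--Lefschetz duality to the homology $d$-manifold with boundary $\fdavis$, obtaining an isomorphism $L^2_\Q H_1(\fdavis)\cong L^2_{\Q^{-1}}H^{d-1}(\fdavis,\partial\fdavis)$. The essential point is that the multiparameter has been inverted, so we are now working with $\Q^{-1}\le\mathbf 1$, precisely the range in which the needed vanishing results are available. Feeding this isomorphism into the long exact sequence of the pair $(\fdavis,\partial\fdavis)$ in $L^2_{\Q^{-1}}$-cohomology, we see that $L^2_\Q H_1(\fdavis)=0$ follows once we establish
\[
L^2_{\Q^{-1}}H^{d-1}(\fdavis)=0\qquad\text{and}\qquad L^2_{\Q^{-1}}H^{d-2}(\partial\fdavis)=0 ;
\]
indeed the first makes the map $H^{d-1}(\fdavis,\partial\fdavis)\to H^{d-1}(\fdavis)$ injective with trivial target, the second kills the connecting homomorphism into $H^{d-1}(\fdavis,\partial\fdavis)$, and together these force $H^{d-1}(\fdavis,\partial\fdavis)=0$.

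The first vanishing is a dimension count. Since weighted $L^2$-cohomology may be computed on the $\vcd W$-dimensional acyclic cocompact $W$-complex of \cite{Bestvina}, we get $L^2_{\Q^{-1}}H^k(\fdavis)=L^2_{\Q^{-1}}H^k(\Sigma_L)=0$ for every $k>\vcd W$, and hypothesis (1) with $T=S$ gives $\vcd W\le|S|-2=d-2<d-1$. For the second vanishing, the structural description of $\partial\fdavis$ reduces us to proving $L^2_{\Q^{-1}}H^i(\Sigma_{L_T})=0$ for every proper $T\subseteq S$ with $W_T$ infinite and every $i\ge|T|-1$ (still with $\Q^{-1}\le\mathbf 1$). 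If $|T|\ge5$, hypothesis (1) gives $\vcd W_T\le|T|-2$, so --- again computing on a Bestvina complex --- $L^2_{\Q^{-1}}H^i(\Sigma_{L_T})=0$ for $i\ge|T|-1$. If $|T|\in\{3,4\}$, hypothesis (2) makes $W_T$ Euclidean or quasi-L\'anner: in the Euclidean case the weighted $L^2$-cohomology for $\Q^{-1}\le\mathbf 1$ is concentrated in degree $0$ \cite{DDJO}, hence vanishes in every degree $i\ge1$, a fortiori for $i\ge|T|-1$; in the quasi-L\'anner case Theorem \ref{thm:l2QL}, via the clause ``$k\ge n-1$ and $\Q\le\mathbf 1$'' with $n=|T|-1$, gives vanishing for $i\ge|T|-2$, again a fortiori for $i\ge|T|-1$. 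Since every proper $T$ with $W_T$ infinite has $|T|\ge3$, one of these cases always applies, and the second vanishing follows.

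The real obstacle lies not in these formal steps but in the structural input they rest on: carrying out the fattening $W$-equivariantly so that $\fdavis$ is an acyclic homology $|S|$-manifold with boundary, and --- above all --- analyzing the $W$-invariant cover of $\partial\fdavis$ together with its Mayer--Vietoris spectral sequence finely enough to confirm that the vanishing of $L^2_{\Q^{-1}}H^{d-2}(\partial\fdavis)$ is indeed forced by the vanishing of the groups $L^2_{\Q^{-1}}H^i(\Sigma_{L_T})$ in the range $i\ge|T|-1$. That analysis is exactly the business of the earlier sections; granting it, the theorem is a formal consequence of Poincar\'e--Lefschetz duality, the $\vcd$-bound coming from \cite{Bestvina}, Theorem \ref{thm:l2QL}, and the known computation of weighted $L^2$-cohomology for Euclidean reflection groups.
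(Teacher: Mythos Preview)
There is a genuine gap, and it stems from an off-by-one error in the dimension of the fattened Davis complex. With the choice $P=\Delta^{|S|-1}$ (and indeed for any admissible $P^\ast$, since $P^\ast$ must have vertex set exactly $S$ and a $GHS^{n-1}$ on $|S|$ vertices forces $n\le |S|-1$), the complex $\fdavis_L$ is a homology $(|S|-1)$-manifold with boundary, not an $|S|$-manifold. So $d=|S|-1$, and your two key vanishings become $L^2_{\Q^{-1}}H^{|S|-2}(\fdavis_L)=0$ and $L^2_{\Q^{-1}}H^{|S|-3}(\partial\fdavis_L)=0$. Neither follows from the argument you give. For the first, the vcd bound from hypothesis~(1) yields only $\vcd W\le |S|-2=d-1$, which kills cohomology \emph{above} $|S|-2$ but not \emph{at} $|S|-2$; the paper bridges this gap via Lemma~\ref{lemma:pushingupcycles} together with the Okun--Schreve input (Theorem~\ref{thm:singerq=1}), which supplies $L^2_{\mathbf 1}b_{|S|-2}(\Sigma_L)=0$. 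You never invoke this. For the second, your structural claim (``vanishing of $L^2_{\Q^{-1}}H^i(\Sigma_{L_T})$ for $i\ge |T|-1$ suffices'') is calibrated to $d=|S|$: in the spectral sequence of Proposition~\ref{prop:specialspectralsequence}, an $i$-simplex is a chain $T_0\subsetneq\cdots\subsetneq T_i$ in $\mathcal N_P$, so $|T_0|\le |S|-1-i$; to kill the contribution in total degree $d-2$ you need $H_{d-2-i}(\Sigma_{L_{T_0}})=0$, which with $d=|S|-1$ asks for vanishing at $|T_0|-2$, not $|T_0|-1$. Your vcd bounds do not give that.

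The paper avoids both problems by organizing the argument differently: it works with $L^2_\Q H_1(\partial\fdavis_L)$ at $\Q\ge\mathbf 1$ (which is Poincar\'e dual to your $L^2_{\Q^{-1}}H^{d-2}(\partial\fdavis_L)$) and proves its vanishing by \emph{induction on $|S|$}. The spectral sequence then only needs $L^2_\Q b_0$ and $L^2_\Q b_1$ to vanish for each $\Sigma_{L_T}$; the inductive hypothesis handles $|T|\ge 5$, while Theorem~\ref{thm:weightedeuclidean} and Theorem~\ref{thm:l2QL} handle $|T|=3,4$. The remaining term $L^2_{\Q^{-1}}H_{|S|-2}(\fdavis_L)$ is dealt with via Lemma~\ref{lemma:pushingupcycles} and Theorem~\ref{thm:singerq=1}. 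Your global outline (Poincar\'e--Lefschetz duality plus the long exact sequence of the pair) is the right one and mirrors Lemma~\ref{lemma:h1bdry}, but the attempt to replace the induction by a one-shot vcd count does not survive the correction $d=|S|-1$.
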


We then discuss how the above theorem implies a specialized version of the Weighted Singer Conjecture for $2$-spherical Coxeter groups. We note that the computations of the above theorems not only rely on the fattened Davis complex, but also on Lemma \ref{lemma:pushingupcycles}. This lemma allows us to ``push'' known computations for $\Q=\mathbf{1}$ to $\Q\leq\mathbf{1}$, provided that the virtual cohomological dimension of the Coxeter group is lower than the dimension of the Davis complex. In fact, with the help of the work of Okun--Schreve \cite{OS}, we obtain the following theorem.

\begin{theorem*}[Theorem \ref{thm:singerfordisks}]
Suppose that the nerve $L$ is an $(n-1)$-disk. Then $$L_\Q^2H_k(\Sigma_L)=0 \text{ for } k\geq n-1 \text{ and } \Q\leq \mathbf{1}.$$
\end{theorem*}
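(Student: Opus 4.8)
If $W$ is finite then $\Sigma_L$ is a finite contractible complex and the statement is immediate, so assume $W$ is infinite, which forces $n\ge 3$ (an $(n-1)$-disk with $n\le 2$ is a point or an edge, each of which makes $W$ finite). The plan is to settle the equal-parameter case $\Q=\mathbf{1}$ first, drawing on \cite{OS}, and then transport the vanishing to all $\Q\le\mathbf{1}$ by means of Lemma \ref{lemma:pushingupcycles}; what legitimizes the transport is the inequality $\vcd W<\dim\Sigma_L$, which I would verify at the outset.

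\emph{The $\vcd$ estimate.} Since $\dim L=n-1$ we have $\dim\Sigma_L=n$, and the Davis chamber $K$, being a cone on the barycentric subdivision of $L$, is an $n$-ball; reflecting it exhibits $\Sigma_L$ as a homology $n$-manifold with nonempty boundary. To bound the cohomological dimension I would combine Davis's formula $H^k_c(\Sigma_L)\cong\bigoplus_{w\in W}\tilde H^{k-1}(L_{\mathrm{Out}(w)})$, where $L_A$ denotes the full subcomplex of $L$ on a vertex set $A$, with the observation that no full subcomplex $L_A$ of the $(n-1)$-disk $L$ has nonzero $(n-1)$-dimensional cohomology: $L$ has $H_{n-1}(L)=0$ and no $n$-cells, hence admits no nonzero $(n-1)$-cycle, and therefore neither does $L_A$, while the integral Ext-term vanishes because $\tilde H_{n-2}(L_A)$ is free by Alexander duality in a sphere containing $L$. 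Consequently $H^n_c(\Sigma_L)=0$ and $\vcd W\le n-1<n=\dim\Sigma_L$. In particular, computing the weighted $L^2$-Betti numbers on the $\vcd W$-dimensional complex of Bestvina shows $L_\Q^2H_k(\Sigma_L)=0$ for all $k\ge n$ and all $\Q$, so only the degree $k=n-1$ needs attention.

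\emph{The case $\Q=\mathbf{1}$, and the transport.} For $k=n-1$ at $\Q=\mathbf{1}$ I would appeal to \cite{OS}: a disk nerve places $W$ in the class of groups with hierarchies handled by Okun and Schreve, and their $L^2$-(co)homology results give $L_{\mathbf{1}}^2H_k(\Sigma_L)=0$ for all $k\ge n-1$. (The top degree is transparent: Poincar\'{e}--Lefschetz duality---available because $\Sigma_L$ is a homology manifold with boundary---identifies $L_{\mathbf{1}}^2H_n(\Sigma_L)$ with a degree-$0$ reduced $L^2$-homology group of the pair $(\Sigma_L,\partial\Sigma_L)$, and this vanishes since $\Sigma_L$ is connected and infinite with every boundary component infinite; in degree $n-1$ duality reduces matters to a quotient of $L_{\mathbf{1}}^2H_1(\Sigma_L)$, and it is this that the hierarchy argument of \cite{OS} eliminates.) Then, with $L_{\mathbf{1}}^2H_k(\Sigma_L)=0$ established for $k\ge n-1$ and with $\vcd W<\dim\Sigma_L$ in hand, Lemma \ref{lemma:pushingupcycles}---in the form that propagates a vanishing range from $\Q=\mathbf{1}$ to $\Q\le\mathbf{1}$ when the virtual cohomological dimension lies below the dimension of the Davis complex---gives $L_\Q^2H_k(\Sigma_L)=0$ for all $k\ge n-1$ and all $\Q\le\mathbf{1}$, as asserted.

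\emph{Main obstacle.} The substance of the proof is the equal-parameter vanishing in degree $n-1$; the point to be checked is that the present situation meets the hypotheses of the hierarchy theorem of \cite{OS}, so that this vanishing holds unconditionally rather than only in the dimensions where the Singer conjecture is known. Granting that, the $\vcd$ estimate and the application of Lemma \ref{lemma:pushingupcycles} are routine.
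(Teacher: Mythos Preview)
Your proposal is correct and follows essentially the same route as the paper: establish $\vcd W\le n-1$, invoke Okun--Schreve for the vanishing at $\Q=\mathbf{1}$ in degree $n-1$, and then apply Lemma~\ref{lemma:pushingupcycles}. The paper is simply more terse: it cites \cite[Corollary~8.5.5]{Davis} for the $\vcd$ bound rather than computing $H^\ast_c(\Sigma_L)$ by hand, and it has already packaged the Okun--Schreve input as Theorem~\ref{thm:singerq=1}, which explicitly covers the disk case, so the ``main obstacle'' you flag is not actually an obstacle here.
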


Note that, when $n=3,4$, this theorem proves a version of the Weighted Singer Conjecture for the case where $\Sigma_L$ is a manifold with boundary.

\section*{Acknowledgements}
The author would like to thank his Ph.D. advisor Boris Okun for his masterful guidance and insightful discussions.

\section{Preliminaries}

\subsection{Coxeter systems and Coxeter groups.} A \emph{Coxeter matrix} $M=(m_{st})$ on a set $S$ is an $S\times S$ symmetric matrix with entries in $\mathbb{N}\cup\{\infty\}$ such that
$$m_{st}=\begin{cases} 1 &\mbox{if } s=t \\
\geq 2 & \mbox{otherwise.} \end{cases}$$
One can associate to $M$ a presentation for a group $W$ as follows. Let $S$ be the set of generators and let $\mathcal{I}=\{(s,t)\in S\times S | m_{st}\neq\infty\}.$ The set of relations for $W$ is $$R=\{(st)^{m_{st}}\}_{(s,t)\in\mathcal{I}}.$$
The group defined by the presentation $\left<S,R\right>$ is a \emph{Coxeter group} and the pair $(W,S)$ is a \emph{Coxeter system}. If all off-diagonal entries of $M$ are either $2$ or $\infty$, then $W$ is \emph{right-angled}.

Given a subset $T\subset S$, define $W_T$ to be the subgroup of $W$ generated by the elements of $T$. Then $(W_T,T)$ is a Coxeter system. Subgroups of this form are \emph{special subgroups}. $W_T$ is a \emph{spherical subgroup} if $W_T$ is finite and, in this case, $T$ is a \emph{spherical subset}. If $W_T$ is infinite, then $T$ is \emph{non-spherical}. We will let $\mathcal{S}$ denote the poset of spherical subsets (the partial order being inclusion).

Given $w\in W$, call an expression $w=s_1s_2\dotsm s_n$ \emph{reduced} if there exists no integer $k<n$ with $w=s_1's_2'\dotsm s_k'.$ We define the \emph{length} of $w$, denoted by $l(w)$, to be the integer $n$ so that $w=s_1s_2\dotsm s_n$ is a reduced expression for $w$. Given a subset $T\subset S$ and an element $w\in W$, the special coset $wW_T$ contains a unique element of shortest length. This element is said to be \emph{$(\emptyset, T)$-reduced}.

\subsection{Growth series.} Suppose that $(W,S)$ is a Coxeter system. Let $\mathbf{t}:=(t_s)_{s\in S}$ denote an $S$-tuple of indeterminates, where $t_s=t_{s'}$ if $s$ and $s'$ are conjugate in $W$. If $s_1s_2\dotsm s_n$ is a reduced expression for $w$, define $t_w$ to be the monomial $$t_w:=t_{s_1}t_{s_2}\dotsm t_{s_n}.$$

Note that $t_w$ is independent of choice of reduced expression due to Tits' solution to the word problem for Coxeter groups (see the discussion at the beginning of \cite[Chapter 17]{Davis}). The \emph{growth series} of $W$ is the power series in $\mathbf{t}$ defined by $$W(\mathbf{t})=\sum_{w\in W} t_w.$$

The \emph{region of convergence} $\mathcal{R}$ for $W(\mathbf{t})$ is defined to be $$\mathcal{R}:=\{\mathbf{t}\in (0,+\infty)^S \mid W(\mathbf{t}) \text{ converges}\}.$$

For each $T\subset S$, we denote the growth series of the special subgroup $W_T$ by $W_T(\mathbf{t})$, and define $\mathbf{t}^{-1}:=(t_s^{-1})_{s\in S}$. We record the following formula for later computations.

\begin{theorem}[{\cite[Theorem 17.1.9]{Davis}}]
\label{thm:growthseriesformula}
$$\frac{1}{W(\mathbf{t})}={\Large \sum_{T\in\mathcal{S}}}\frac{(-1)^{|T|}}{W_T(\mathbf{t}^{-1})}.$$
\end{theorem}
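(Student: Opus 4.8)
The plan is to clear denominators and reduce the asserted identity to a purely combinatorial statement about descent sets, which then follows from inclusion--exclusion.

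First I would rewrite the summands on the right-hand side. For a spherical subset $T$ the group $W_T$ is finite and has a unique longest element $w_T$; the involution $w\mapsto w_Tw$ of $W_T$ satisfies $l(w_Tw)=l(w_T)-l(w)$, which yields the symmetry $W_T(\mathbf t)=t_{w_T}\,W_T(\mathbf t^{-1})$, hence $1/W_T(\mathbf t^{-1})=t_{w_T}/W_T(\mathbf t)$. Writing each $w\in W$ uniquely as $w=vu$ with $u\in W_T$ and $v$ the $(\emptyset,T)$-reduced representative of $wW_T$ (so that $l(w)=l(v)+l(u)$) gives the factorization $W(\mathbf t)=W^T(\mathbf t)\,W_T(\mathbf t)$, where $W^T(\mathbf t):=\sum t_v$, the sum being over $(\emptyset,T)$-reduced elements $v$. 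Multiplying the asserted identity through by $W(\mathbf t)$ therefore reduces it to
$$\sum_{T\in\mathcal S}(-1)^{|T|}\,t_{w_T}\,W^T(\mathbf t)=1.$$

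Next I would identify $t_{w_T}W^T(\mathbf t)$ as a sub-sum of the growth series. Recall that for each $w\in W$ the descent set $\mathrm{In}(w):=\{s\in S:l(ws)<l(w)\}$ is a spherical subset, and that a $(\emptyset,T)$-reduced element $v$ satisfies $l(vu)=l(v)+l(u)$ for every $u\in W_T$, so in particular $l(vw_T)=l(v)+l(w_T)$. I claim that the elements of the form $vw_T$ with $v$ $(\emptyset,T)$-reduced are exactly the $w\in W$ with $T\subseteq\mathrm{In}(w)$, the correspondence $v\leftrightarrow w=vw_T$ being a bijection. Indeed, for $s\in T$ we have $l(ws)=l(v)+l(w_Ts)=l(v)+l(w_T)-1<l(w)$, so $T\subseteq\mathrm{In}(w)$; conversely, given $w$ with $T\subseteq\mathrm{In}(w)$ and its decomposition $w=vu$ as above, the condition $T\subseteq\mathrm{In}(w)$ forces $u\in W_T$ to have descent set (inside $W_T$) containing all of $T$, whence $u=w_T$. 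Consequently $t_{w_T}W^T(\mathbf t)=\sum_{w:\,T\subseteq\mathrm{In}(w)}t_w$.

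Finally I would interchange the order of summation:
$$\sum_{T\in\mathcal S}(-1)^{|T|}t_{w_T}W^T(\mathbf t)=\sum_{w\in W}t_w\!\!\sum_{\substack{T\in\mathcal S\\ T\subseteq\mathrm{In}(w)}}\!\!(-1)^{|T|}.$$
Since $\mathrm{In}(w)$ is spherical, every subset of it lies in $\mathcal S$, so the inner sum equals $(1-1)^{|\mathrm{In}(w)|}$; this vanishes unless $\mathrm{In}(w)=\emptyset$, i.e.\ unless $w=1$, in which case the term contributes $t_1=1$. This establishes the reduced identity, and hence the theorem. The one genuinely substantive input is the descent-set machinery of the third paragraph --- that $\mathrm{In}(w)$ is always spherical and that the elements $vw_T$ parametrize those $w$ with $T\subseteq\mathrm{In}(w)$ --- so that is where I expect to spend the most care; the rest is bookkeeping with coset factorizations. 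As for convergence, the cleanest reading is as an identity of formal power series in $\mathbf t$ (each homogeneous component being a finite sum), from which the analytic statement on $\mathcal R$ follows; alternatively one works throughout on $\mathcal R$.
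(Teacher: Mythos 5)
Your proof is correct, and it is essentially the classical descent-set argument (going back to Steinberg and Bourbaki) that Davis uses to prove Theorem~17.1.9 in the cited reference; the paper itself only cites the result and does not reproduce a proof. The three ingredients you isolate --- the symmetry $W_T(\mathbf t)=t_{w_T}W_T(\mathbf t^{-1})$ for finite $W_T$, the coset factorization $W(\mathbf t)=W^T(\mathbf t)W_T(\mathbf t)$, and the bijection between $(\emptyset,T)$-reduced $v$ and elements $w=vw_T$ with $T\subseteq\operatorname{In}(w)$, culminating in the inclusion--exclusion over the (spherical) descent set --- are exactly the standard route, so there is nothing to flag.
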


Note that if $W$ is finite, then $W(\mathbf{t})$ is a polynomial with integral coefficients. Thus an immediate consequence of the above formula is that $W(\mathbf{t})$ is a rational function in $\mathbf{t}$.

\subsection{Homology manifolds.} A space $X$ is a \emph{homology $n$-manifold} if it has the same local homology groups as $\mathbb{R}^n$, i.e. that for each $x\in X$
$$H_k(X,X- x)=\begin{cases} \mathbb{Z} &\mbox{if } k=n \\
 0 & \mbox{otherwise.} \end{cases}$$
The pair $(X,\partial X)$ with $\partial X$ closed in $X$ is a \emph{homology $n$-manifold with boundary} if it has the same local homology groups as does a manifold with boundary, i.e., the following conditions hold:
\begin{itemize}
  \item $X-\partial X$ is a homology $n$-manifold,
  \item $\partial X$ is a homology $(n-1)$-manifold,
  \item for each $x\in\partial X$, the local homology groups $H_\ast(X,X- x)$ all vanish.
\end{itemize}

$X$ is a \emph{generalized homology $n$-sphere}, abbreviated $GHS^n$, if it is a homology $n$-manifold with the same homology as $S^n$. Similarly, the pair $(X,\partial X)$ is a \emph{generalized homology $n$-disk}, abbreviated $GHD^n$, if it is a homology $n$-manifold with boundary with the same homology as the pair $(D^n,S^{n-1})$. Note that the cone on a generalized homology sphere is a generalized homology disk.

\subsection{Mirrored spaces.} A \emph{mirror structure} over a set $S$ on a space $X$ is a family of subspaces $(X_s)_{s\in S}$ indexed by $S$. Then $X$ is a \emph{mirrored space over $S$}. Put $X_\emptyset=X$, and for each nonempty subset $T\subseteq S$, define the following subspaces of $X$: $$X_T:=\bigcap_{s\in T} X_s,\hskip2mm X^T:=\bigcup_{s\in T} X_s.$$

If $(W,S)$ is a Coxeter system and $X$ is a mirrored space over $S$, then the mirror structure $(X_s)_{s\in S}$ is \emph{$W$-finite} if $X_T=\emptyset$ for all non-spherical $T\subseteq S$.

\subsection{Mirrored homology manifolds with corners.} Suppose that $X$ is a mirrored space over a finite set $S$. $X$ is an \emph{$S$-mirrored homology $n$-manifold with corners} if every nonempty $X_T$ is a homology $(n-|T|)$-manifold with boundary $\partial X_T=\bigcup_{U\supsetneq T} X_U$. By taking $T=\emptyset$, this definition implies that the pair $(X,\partial X)$ is a homology $n$-manifold with boundary.

Given a Coxeter system $(W,S)$, we set $S'=S\cup \{e\}$, where $e$ is the identity element of $W$. We now say that $T\subseteq S'$ is spherical if and only if $T-\{e\}$ is spherical. A mirrored space $X$ over the set $S'$ with $W$-finite mirror structure $(X_s)_{s\in S'}$ is a \emph{partially $S$-mirrored homology $n$-manifold with corners} if every nonempty $X_T$ is a homology $(n-|T|)$-manifold with boundary $\partial X_T=\bigcup_{U\supsetneq T} X_U$. To summarize, we simply have defined the non-$S$-mirrored part of $X$ to be an auxiliary mirror corresponding to the identity element of $W$.

\subsection{Basic construction.} Suppose that $(W,S)$ is a Coxeter system and that $X$ is a mirrored space over $S$. Set $W_\emptyset=W$ and as before, for each nonempty subset $T\subset S$, let $W_T$ be the subgroup of $W$ generated by $T\subset S$. Put $S(x):=\{s\in S\mid x\in X_s\}$. Define an equivalence relation $\sim$ on $W\times X$ by $(w,x)\sim (w',y)$ if and only if $x=y$ and $w^{-1}w'\in W_{S(x)}$. Give $W\times X$ the product topology and let $\mathcal{U}(W,X)$ denote the quotient space: $$\mathcal{U}(W,X)=(W\times X)/ \sim.$$

$\mathcal{U}(W,X)$ is the \emph{basic construction} and $X$ is the \emph{fundamental chamber}. There is a natural $W$-action on $W\times X$, and this action respects the equivalence relation, hence the $W$-action on $W\times X$ descends to a $W$-action on $\mathcal{U}(W,X)$.

We will be interested in conditions on $X$ which guarantee that the basic construction produces a homology $n$-manifold with boundary. But first, we consider the following proposition, as the proof is similar to the main result of this subsection.

\begin{prop}[Compare {\cite[Proposition 10.7.5]{Davis}}]
\label{prop:basicconstrmnfld}
Suppose that $(W,S)$ is a Coxeter system and that $X$ is an $S$-mirrored homology $n$-manifold with corners with $W$-finite mirror structure. Then $\mathcal{U}(W,X)$ is a homology $n$-manifold.
\end{prop}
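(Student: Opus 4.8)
The plan is to verify the local homology condition $H_k(\mathcal{U}(W,X), \mathcal{U}(W,X)-z) \cong H_k(\mathbb{R}^n, \mathbb{R}^n - 0)$ at every point $z \in \mathcal{U}(W,X)$ by excision, reducing the computation to a local model built from the chamber $X$ near the image point. First I would recall the standard local structure of the basic construction: a point $z$ of $\mathcal{U}(W,X)$ is the class of some $(w,x)$, and by $W$-equivariance we may assume $w=e$. A neighborhood of $z$ in $\mathcal{U}(W,X)$ is then modeled, via excision, on $\mathcal{U}(W_{S(x)}, U)$ where $U$ is a small mirrored neighborhood of $x$ in $X$ and $S(x) = \{s \in S \mid x \in X_s\}$; since the mirror structure is $W$-finite, $W_{S(x)}$ is a finite Coxeter group, so this is a finite union of copies of $U$ glued along mirrors. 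Concretely, I would show
$$H_k(\mathcal{U}(W,X), \mathcal{U}(W,X)- z) \cong H_k\bigl(\mathcal{U}(W_{S(x)}, U), \mathcal{U}(W_{S(x)}, U) - z\bigr),$$
so the problem becomes purely local and finite.

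Next I would set up an induction on $|S(x)|$, or equivalently on the order of the finite group $W_{S(x)}$. When $S(x) = \emptyset$, the point $x$ lies in the interior $X - \partial X$, which is a homology $n$-manifold by hypothesis (the $T=\emptyset$ case of the corner structure), and there is nothing to glue, so the local homology is that of $\mathbb{R}^n$. For the inductive step, pick $s \in S(x)$ and use the Coxeter group decomposition: writing $W_{S(x)}$ as built from the parabolic $W_{S(x)-\{s\}}$ and its cosets, or more efficiently working one mirror at a time, I would decompose a punctured neighborhood in $\mathcal{U}(W_{S(x)},U)$ into two pieces glued along (a neighborhood in) $\mathcal{U}(W_{S(x)-\{s\}}, U_s)$, where $U_s$ is a neighborhood of $x$ in the mirror $X_s$. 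The corner hypothesis says precisely that $X_s$ near $x$ is a homology $(n-1)$-manifold with boundary, with the induced mirror structure over $S - \{s\}$ again giving a mirrored homology manifold with corners; this is exactly what feeds the inductive hypothesis. A Mayer--Vietoris argument for the pair then computes the local homology: the two half-chambers each contribute the local homology of a manifold-with-boundary point (trivial reduced local homology), the overlap contributes that of $\mathcal{U}(W_{S(x)-\{s\}}, U_s)$, which by induction is a homology manifold of the correct dimension, and the long exact sequence assembles these into the local homology of $\mathbb{R}^n$. Doubling across the mirror $X_s$ is the cleanest way to organize this: $\mathcal{U}(\langle s \rangle, U)$ near $x$ is the double of the manifold-with-boundary neighborhood along part of its boundary, which is again a homology $n$-manifold.

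The main obstacle I expect is bookkeeping the mirror/boundary structure correctly through the induction — that is, checking that when we pass to the mirror $X_s$ and its induced mirror structure $(X_s \cap X_t)_{t \in S - \{s\}}$, we again get an $(S-\{s\})$-mirrored homology $(n-1)$-manifold with corners with $W_{S(x)-\{s\}}$-finite mirror structure, so that the inductive hypothesis genuinely applies. This requires unwinding the definition $\partial X_T = \bigcup_{U \supsetneq T} X_U$ and confirming it restricts compatibly, and keeping careful track of which $X_U$ are nonempty using $W$-finiteness. There is also a mild subtlety in the excision step and in ensuring the neighborhoods $U$, $U_s$ can be chosen mirror-compatibly and small enough that all the local homology identifications are valid; but this is routine once one works in a sufficiently fine neighborhood. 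Since the excerpt flags that this proposition's proof is ``similar to the main result of this subsection,'' I would present it as a template: establish the local model via excision, then run the Mayer--Vietoris induction on the finite Coxeter group $W_{S(x)}$, with the doubling-along-a-mirror construction as the key geometric input at each stage.
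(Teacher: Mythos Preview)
Your excision reduction to the finite local model $\mathcal{U}(W_{S(x)}, V)$ and the base case $S(x)=\emptyset$ are correct and match the paper. The gap is in the inductive step: the decomposition of $\mathcal{U}(W_{S(x)}, V)$ into ``two pieces glued along $\mathcal{U}(W_{S(x)-\{s\}}, V_s)$'' does not exist in general. The double $\mathcal{U}(\langle s\rangle, V)$ is fine on its own, but once $|S(x)|\ge 2$ and $W_{S(x)}$ is non-abelian (e.g.\ a dihedral group of order $2m$ with $m\ge 3$), the basic construction consists of $|W_{S(x)}|$ copies of $V$ arranged according to the Coxeter relations, and removing one generator $s$ breaks it into $[W_{S(x)}:W_{S(x)-\{s\}}]$ translates of $\mathcal{U}(W_{S(x)-\{s\}}, V)$, not two. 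Their pairwise intersections are not all of the form $\mathcal{U}(W_{S(x)-\{s\}}, V_s)$ either, so a single Mayer--Vietoris is unavailable; you are forced into a nerve-type spectral sequence whose analysis needs exactly the acyclicity information the paper obtains by other means. Your ``two half-chambers'' picture is implicitly assuming $W_{S(x)}\cong W_{S(x)-\{s\}}\times\langle s\rangle$, which is the right-angled case only.

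The paper sidesteps this with a different device. After the same excision step, it replaces the neighborhood $V$ by the mirrored space $Z=V\cup\mathrm{Cone}(V-x)$ (and $Z_T=V_T\cup\mathrm{Cone}(V_T-x)$), so that the local homology at $x$ becomes the reduced homology of $\mathcal{U}(W_{S(x)},Z)$. The corner hypothesis on $X$ then says that $Z$ and every $Z_T$ with $T\subsetneq S(x)$ are acyclic, while $Z_{S(x)}$ has the homology of $S^{\,n-|S(x)|}$. These are exactly the inputs to a lemma quoted from Davis's book (\cite[Corollary~8.2.5]{Davis}) characterizing when $\mathcal{U}(W_{S(x)},Z)$ has the homology of $S^n$; its hypotheses on the pairs $(Z,Z^{T'})$ are checked via the Acyclic Covering Lemma. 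No induction on $|S(x)|$ is used. If you want to rescue an inductive route, induct on $n$ rather than on $|S(x)|$, reducing to the statement that the link of $x$ is a homology $(n-1)$-sphere; but identifying the global homology of that link still requires a separate argument, which is essentially what the cited lemma packages.
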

\begin{proof}
Without loss of generality suppose that $x\in X$. By excision, we need to show that the local homology groups $H_\ast (U,U-x)$ are correct for some neighborhood $U$ of $x$ in $\mathcal{U}(W,X)$. If $x\in X-\partial X$ then we are done since $X-\partial X$ is a homology $n$-manifold and $x$ does not lie in any mirror. As before, set $S(x)=\{s\in S\mid x\in X_s\}$ and suppose that $|S(x)|\geq 1$.

Let $V$ be a neighborhood of $x$ in $X$. For each $s\in S(x)$, set $V_s=V\cap X_s$, and give $V$ the mirror structure $\{V_s\}_{s\in S(x)}$. Note that, for each $T\subseteq S(x)$, $V_T=V\cap X_T$, where as before, $X_T=\bigcap_{s\in T} X_s$. Now, $x\in X_{S(x)}$, so for each $T\subset S(x)$, $x\in\partial X_T$ ($X_T$ is by assumption a homology $(n-|T|)$-manifold with boundary and $X_{S(x)}\subseteq\partial X_T$). Furthermore, $x$ does not lie in $\partial X_{S(x)}$. Therefore by excision, it follows that for each $T\subset S(x)$, the local homology groups $H_\ast(V_T,V_T-x)$ vanish, and $H_\ast(V_{S(x)},V_{S(x)}-x)$ is concentrated in dimension $n-|S(x)|$ and $\mathbb{Z}$ in that dimension.

Now, define

\begin{align*}
Z &:=  V\cup \textnormal{Cone}(V-x) \\
Z_s &:=  V_s\cup \textnormal{Cone}(V_s-x)
\end{align*}

So, $Z$ has the mirror structure $\{Z_s\}_{s\in S(x)}$. Since $V$ is a neighborhood of $x$ in $X$, and $x\in\partial X$, it follows that the local homology groups $H_\ast(V,V-x)$ vanish. In particular, $H_\ast(V)\cong H_\ast(V-x)$, and the Mayer--Vietoris sequence, along with the five lemma, implies that $Z$ is acyclic. Similarly, for each $T\subset S(x)$, since the local homology groups $H_\ast(V_T,V_T-x)$ vanish, it follows that $Z_T$ is acyclic. Since $H_\ast(V_{S(x)},V_{S(x)}-x)$ is concentrated in dimension $n-|S(x)|$ and $\mathbb{Z}$ in that dimension, that Mayer--Vietoris sequence again implies that the same is true for $H_\ast(Z_{S(x)})$. In particular, $Z_{S(x)}$ has the same homology as $S^{n-|S(x)|}$.

We now finish the proof by applying the following lemma:
\begin{lemma}[{\cite[Corollary 8.2.5]{Davis}}]
$\mathcal{U}(W_{S(x)},Z)$ has the same homology as $S^n$ if and only if there is a unique spherical subset $R\subseteq S(x)$ satisfying the following three conditions:
\begin{description}
  \item[(a)] $W_{S(x)}$ decomposes as $W_{S(x)}=W_R\times W_{S(x)-R}$.
  \item[(b)] For all spherical $T'\subseteq S(x)$  with $T'\neq R$, $(Z,Z^{T'})$ is acyclic.
  \item[(c)] $(Z,Z^R)$ has the same homology as $(D^n,S^{n-1})$.
\end{description}
\end{lemma}

We apply the lemma to $R=S(x)$. Condition (a) is then satisfied vacuously, so we wish to show (b) and (c). For $T\subseteq R$, consider the cover of $Z^T$ by the mirrors $\{Z_s\}_{s\in T}$. Note that for each $U\subset R$, the intersection of mirrors $Z_U$ is acyclic. The nerve of this cover is a simplex on $U$, and in particular is contractible. The Acyclic Covering Lemma \cite[Theorem 4.4, Ch VII]{Brown} then implies that $Z^U$ is acyclic. Note that $Z_R$ has the same homology as $S^{n-|R|}$, so a similar spectral sequence argument also implies that $Z^{R}$ has the same homology as $S^{n-1}$.

Now, set $U=\mathcal{U}(W_{R},V)$. Since $\mathcal{U}(W_{R},Z)=U\cup \textnormal{Cone}(U-x)$ and $\mathcal{U}(W_{R},Z)$ has the same homology as $S^n$, it follows that $H_\ast(U,U-x)$ is concentrated in dimension $n$ and $\mathbb{Z}$ in that dimension. Therefore $U$ is our desired neighborhood.
\end{proof}

\begin{prop}
\label{prop:basicconstrmnfldbdry}
Suppose that $(W,S)$ is a Coxeter system and suppose that $X$ is a partially $S$-mirrored homology $n$-manifold with corners. Set $Y=X_e$ and give $Y$ the induced mirror structure $(Y_s)_{s\in S}$, where $Y_s:=Y\cap X_s$. Then $\mathcal{U}(W,X)$ is a homology $n$-manifold with boundary $\partial\mathcal{U}(W,X)=\mathcal{U}(W,Y).$
\end{prop}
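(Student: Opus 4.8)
\subsection*{Proof proposal}

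The plan is to mimic the proof of Proposition~\ref{prop:basicconstrmnfld}, verifying in turn the three conditions defining a homology $n$-manifold with boundary; by $W$-equivariance it suffices to work near a point $x$ of the fundamental chamber $X$. Write $S(x)=\{s\in S': x\in X_s\}$ and $T=S(x)\cap S$, so that the subgroup generated by $S(x)$ is $W_T$ and, on a small enough neighborhood $V$ of $x$ in $X$, the germ of $\mathcal{U}(W,X)$ at $x$ coincides with the germ of $\mathcal{U}(W_T,V)$, where $V$ carries the mirror structure $(V_s)_{s\in S(x)}$, $V_s=V\cap X_s$.

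First dispose of two of the three conditions. Since $X$ is a homology $n$-manifold with boundary with $\partial X=\bigcup_{U\supsetneq\emptyset}X_U\supseteq X_e=Y$, the mirror $Y$ is closed in $X$, so $\mathcal{U}(W,Y)$ is closed in $\mathcal{U}(W,X)$; moreover $Y_T=Y\cap X_T=X_{\{e\}\cup T}$ for each $T\subseteq S$, so $Y$ with the induced mirror structure is an $S$-mirrored homology $(n-1)$-manifold with corners whose mirror structure is $W$-finite, and Proposition~\ref{prop:basicconstrmnfld} applied to $Y$ shows that $\mathcal{U}(W,Y)$ is a homology $(n-1)$-manifold. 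Next, $[w,x]\in\mathcal{U}(W,Y)$ exactly when $e\in S(x)$; so on $\mathcal{U}(W,X)\setminus\mathcal{U}(W,Y)$ one may take $x$ with $S(x)\subseteq S$, and with $V$ chosen to miss $Y$ the germ at $x$ is that of the basic construction on a neighborhood in an $S(x)$-mirrored homology $n$-manifold with corners, so the local homology computation of Proposition~\ref{prop:basicconstrmnfld} shows $x$ is an interior point. Thus $\mathcal{U}(W,X)\setminus\mathcal{U}(W,Y)$ is a homology $n$-manifold.

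The substance is the remaining condition: vanishing of the local homology at points $x\in Y$, i.e.\ with $e\in S(x)$, so that $S(x)=T\sqcup\{e\}$ with $T$ spherical. Form $Z=V\cup\mathrm{Cone}(V-x)$ and $Z_t=V_t\cup\mathrm{Cone}(V_t-x)$ for $t\in T$, as in Proposition~\ref{prop:basicconstrmnfld}, so that $\mathcal{U}(W_T,Z)=U\cup\mathrm{Cone}(U-x)$ with $U=\mathcal{U}(W_T,V)$. For every $T'\subseteq T$ we have $T'\subsetneq S(x)$, whence $X_{S(x)}\subseteq\partial X_{T'}$, and since $x\in X_{S(x)}$ the point $x$ lies on the boundary of the homology manifold-with-boundary $X_{T'}$; therefore $H_\ast(V_{T'},V_{T'}-x)=0$ and, exactly as in Proposition~\ref{prop:basicconstrmnfld}, each $Z_{T'}$ is acyclic (in particular $Z$ itself is acyclic). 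It remains to show that $\mathcal{U}(W_T,Z)$ is acyclic, for then the Mayer--Vietoris sequence and the five lemma force $H_\ast(U,U-x)=0$, identifying $x$ as a boundary point of $\mathcal{U}(W,X)$ whose local boundary germ is $\mathcal{U}(W_T,V\cap Y)$, a germ of $\mathcal{U}(W,Y)$.

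To see that $\mathcal{U}(W_T,Z)$ is acyclic, enumerate $W_T=\{w_1=e,w_2,\dots\}$ by word length and set $\mathcal{U}_k=\bigcup_{i\le k}w_iZ$. The overlap $\mathcal{U}_{k-1}\cap w_kZ$ equals $w_kZ^{\mathrm{In}(w_k)}$, where $\mathrm{In}(w_k)=\{t\in T: l(w_kt)<l(w_k)\}\subseteq T$; since all the intersections $Z_{T'}$ are acyclic and the nerve of the cover $\{Z_t\}_{t\in\mathrm{In}(w_k)}$ of $Z^{\mathrm{In}(w_k)}$ is a full simplex, hence contractible, the Acyclic Covering Lemma \cite[Theorem 4.4, Ch VII]{Brown} gives that $Z^{\mathrm{In}(w_k)}$ is acyclic. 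Mayer--Vietoris then yields $H_\ast(\mathcal{U}_k)\cong H_\ast(\mathcal{U}_{k-1})$, and induction starting from the acyclic $\mathcal{U}_1=Z$ (together with finiteness of $W_T$) shows $\mathcal{U}(W_T,Z)$ is acyclic. Assembling the three conditions shows $\mathcal{U}(W,X)$ is a homology $n$-manifold with boundary, and the interior/boundary dichotomy above identifies $\partial\mathcal{U}(W,X)=\mathcal{U}(W,Y)$. I expect the only real obstacle to be organizational bookkeeping of the mirrors near $Y$ — in particular recognizing the auxiliary mirror $X_e$ as precisely the portion of $\partial X$ that never gets glued up by the basic construction — rather than any genuinely new homological input beyond what already appears in Proposition~\ref{prop:basicconstrmnfld}.
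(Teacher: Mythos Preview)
Your proof is correct and follows essentially the same route as the paper's: verify the three conditions for a homology manifold with boundary, reduce the first two to Proposition~\ref{prop:basicconstrmnfld}, and handle the third by observing that now $Z_T$ (in your notation) is acyclic rather than a homology sphere, forcing $\mathcal{U}(W_T,Z)$ to be acyclic and hence the local homology at $x$ to vanish. The only difference is cosmetic: where the paper cites \cite[Corollary 8.2.8]{Davis} for this last acyclicity, you supply the standard word-length filtration argument directly.
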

\begin{proof}
Set $\mathcal{U}=\mathcal{U}(W,X)$ and $\partial\mathcal{U}=\mathcal{U}(W,Y)$. Proposition \ref{prop:basicconstrmnfld} guarantees that $\partial\mathcal{U}$ is a homology $(n-1)$-manifold. This is because $Y=X_e$, and $X_e$ (with its induced $S$-mirror structure) is an $S$-mirrored homology $(n-1)$-manifold with corners. Similarly, Proposition \ref{prop:basicconstrmnfld} implies that $\mathcal{U}-\partial\mathcal{U}$ is a homology $n$-manifold, since $\mathcal{U}-\partial\mathcal{U}=U(W,Z)$, where $Z=X-Y$ (with its induced $S$-mirror structure) is an $S$-mirrored homology $n$-manifold with corners. It remains to show that for each $x\in\partial\mathcal{U}$, the local homology groups $H_\ast(\mathcal{U},\mathcal{U}-x)$ vanish.

 Suppose that $x\in\partial\mathcal{U}$. Without loss of generality, we can assume that $x\in Y\subset\partial X$. If $x$ does not lie in any mirror $(X_s)_{s\in S}$, then we are done by excision. So, suppose $|S(x)|\geq 1$ (recall $S(x)=\{s\in S\mid x\in X_s\}$) and let $V$ be a neighborhood of $x$ in $X$. We now give $V$ the $S$-mirror structure as in the proof of Proposition \ref{prop:basicconstrmnfld}, noting that the only difference between that proof and the current situation is the fact that the local homology groups $H_\ast(V_{S(x)},V_{S(x)}-x)$ vanish. This is because, since $x\in Y$ and $|S(x)|\geq 1$, it follows that $x\in \partial X_{S(x)}$. Now, following the proof of Proposition \ref{prop:basicconstrmnfld} line by line, the only difference now is that $Z_{S(x)}$ is acyclic (as opposed to having the homology of $S^{n-1}$ as before). This then implies that $\mathcal{U}(W_{S(x)},Z)$ is acyclic \cite[Corollary 8.2.8]{Davis}, which in turn implies that the local homology groups $H_\ast(\mathcal{U},\mathcal{U}-x)$ vanish.
\end{proof}

\subsection{Cell complexes.} A \emph{cell} is the convex hull of finitely many points in $\mathbb{R}^n$. A \emph{cell complex} is a collection of cells $\Lambda$ where

\begin{enumerate}[(i)]
  \item if $C\in\Lambda$ and $F$ is a face of $C$, then $F\in\Lambda$,
  \item for any two cells $C_1,C_2\in \Lambda$, either $C_1\cap C_2=\emptyset$ or $C_1\cap C_2$ is a common face of $C_1$ and $C_2$,
  \item $\Lambda$ is locally finite, i.e. each cell in $\Lambda$ is contained in only finitely many other cells in $\Lambda$.
\end{enumerate}

A \emph{cellulation} of a space $X$ is a homeomorphism $f$ from a cell complex $\Lambda$ onto $X$. We will subdue the homeomorphism $f$ and just say that $\Lambda$ is a cellulation of $X$.

\subsection{The $(\Lambda,S)$-chamber.} Suppose that $\Lambda$ is a cell complex with vertex set $S$ and let $\mathcal{F}(\Lambda)$ denote the poset of cells of $\Lambda$, including the empty set. Let $P:=|\mathcal{F}(\Lambda)|$ denote the geometric realization of the poset $\mathcal{F}(\Lambda)$. For each $T\in\mathcal{F}(\Lambda)$, define $P_T:=|\mathcal{F}(\Lambda)_{\geq T}|$ and $\partial P_T:=|\mathcal{F}(\Lambda)_{>T}|$, so each $P_T$ is the cone on $b\textnormal{Link}(T,\Lambda)$, the barycentric subdvision of $\textnormal{Link}(T,\Lambda)$. In particular, taking $T=\emptyset$, we have that $P$ is the cone on $b\Lambda$, with cone point corresponding to $\emptyset$. For each $s\in S$, put $P_s:=P_{\{s\}}$. This endows $P$ with the mirror structure $(P_s)_{s\in S}$. $P$ is the \emph{$(\Lambda,S)$-chamber}.

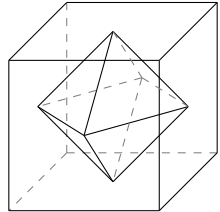
\begin{figure}[H]
\[\begin{tikzpicture}[scale=1]
\tikzstyle{vertex}=[circle, fill=black, draw, inner sep=0pt, minimum size=1pt]

%cube coordinates
\coordinate (c0) at (0,0,0);
\coordinate (c1) at (0,2,0);
\coordinate (c2) at (0,2,2);
\coordinate (c3) at (0,0,2);
\coordinate (c4) at (2,0,0);
\coordinate (c5) at (2,2,0);
\coordinate (c6) at (2,2,2);
\coordinate (c7) at (2,0,2);

%octahedron coordinates
\coordinate (o1) at (1,2,1);  %on top face of cube
\coordinate (o2) at (1,1,2);  %front face
\coordinate (o3) at (0,1,1);  %left
\coordinate (o4) at (1,1,0);  %back
\coordinate (o5) at (1,0,1);  %bottom
\coordinate (o6) at (2,1,1);  %right

%drawing the cube
\path[gray,dashed]
    (c0) edge (c1)
    (c0) edge (c3)
    (c0) edge (c4)
    ;
\path
    (c1) edge (c2)
    (c1) edge (c5)
    (c2) edge (c3)
    (c2) edge (c6)
    (c3) edge (c7)
    (c4) edge (c5)
    (c4) edge (c7)
    (c5) edge (c6)
    (c6) edge (c7)
    ;

%drawing the octahedron
\path
    (o1) edge (o2)
    (o1) edge (o3)
    (o1) edge (o6)
    (o2) edge (o5)
    (o2) edge (o6)
    (o2) edge (o3)
    (o3) edge (o5)
    (o5) edge (o6)
    ;
\path[gray,dashed]
    (o1) edge (o4)
    (o4) edge (o6)
    (o4) edge (o3)
    (o4) edge (o5)
;

\end{tikzpicture}\]
\caption{:\hskip2mm$(\Lambda,S)$-chamber when $\Lambda$ is the boundary complex of an octahedron}
\end{figure}

Note that if $\Lambda$ is a $GHS^{n-1}$, then the link of every cell $\sigma$ in $\Lambda$ is a $GHS^{n-\dim\sigma-2}$. It follows that $P$ is a $GHD^n$ and that for each $T\in\mathcal{F}(\Lambda)$, the pair $(P_T,\partial P_T)$ is a $GHD^{n-\dim\sigma_T-1}$, where $\sigma_T$ is the geometric cell in $\Lambda$ spanned by $T$.

\subsubsection{Neighborhoods of faces.}\label{subsection:nghbdsoffaces} Let $\sigma_T$ denote the geometric cell spanned by the vertex set $T$ in $\Lambda$, and let $b\sigma_T$ denote its barycentric subdivision. By definition, $b\sigma_T$ is the $(\partial\sigma_T,T)$-chamber, and in particular, $\sigma_T$ has a natural mirror structure over $T$.

$P$ is itself a flag simplicial complex, and for each $T\in\mathcal{F}(\Lambda)$, $P_T$ is a subcomplex of $P$. Hence $P_T-\bigcup_{U\supset T} P_U$ has a neighborhood of the form $\sigma_T\ast P_T$, the join of $\sigma_T$ and $P_T$. Following the join lines for a little while, it follows that $P_T-\bigcup_{U\supset T} P_U$ has neighborhoods of the form $\textnormal{Cone}(\sigma_T)\times P_T$. We record this fact, as we will use it in an upcoming construction.

\subsection{The Davis complex.} Suppose that $(W,S)$ is a Coxeter system and, as before, denote by $\mathcal{S}$ the poset of all spherical subsets of $S$, partially ordered by inclusion. $\mathcal{S}$ is an abstract simplicial complex with vertex set $S$. Let $L$ be the geometric realization of the abstract simplicial complex $\mathcal{S}$ and $K$ be the $(L,S)$-chamber. In this special situation, $K$ is called the \emph{Davis chamber} and $L$ is called the \emph{nerve} of $(W,S)$.

For each $s\in S$ define $$K_s:=|\mathcal{S}_{\geq\{s\}}|.$$ So, $K_s$ is the union of simplices in $K$ with minimum vertex $\{s\}$. The family $(K_s)_{s\in S}$ is a mirror structure on $K$.

The \emph{Davis complex} $\Sigma_L$ associated to the nerve $L$ is now defined to be $\Sigma_L:=\mathcal{U}(W,K)$.

Note that $\Sigma_L$ is naturally a simplicial complex, the simplicial structure of $K$ inducing a simplicial structure on $\Sigma_L$, and moreover, it is proved in \cite{Davis1} that $\Sigma_L$ is contractible. Furthermore, if $L$ is a triangulation of an $(n-1)$-sphere, then $\Sigma_L$ is an $n$-manifold.

\subsubsection{The labeled nerve.} There is a natural way to label the edges of $L$ so that the Coxeter system $(W,S)$ can be recovered (up to isomorphism) from $L$. Let $E(L)$ denote the set of edges of $L$. We define the labeling map $m:E(L)\rightarrow \{2,3,...\}$ by sending $\{s,t\}\rightarrow m_{st}$, where $m_{st}\in\mathbb{N}$ and $(st)^{m_{st}}=1$. $L$ with this labeling map is the \emph{labeled nerve}.

\subsubsection{Right-angled cones.} Let $c$ denote a point and $L$ be the labeled nerve. Consider the join $L'=c\ast L$, where all of the new edges are labeled by $2$. $L'$ is called the \emph{right-angled cone on $L$}. Note that the corresponding Coxeter system to $L'$ is $(W\times\mathbb{Z}_2, S\cup\{c\})$, and $\Sigma_{L'}=\Sigma_L\times [-1,1].$

\subsubsection{The Coxeter cellulation.} \label{section:coxcell} The Davis complex also admits a decomposition into \emph{Coxeter cells}. For each $T\in\mathcal{S}$, let $v_T$ denote the corresponding barycenter in $K$. Let $c_T$ denote the union of simplices $c\subset\Sigma_L$ such that $c\cap K_T=v_T$. The boundary of $c_T$ is then cellulated by $wc_U$, where $w\in W_T$ and $U\subset T$. With its simplicial structure, the boundary $\partial c_T$ is the Coxeter complex corresponding to the Coxeter system $(W_T,T)$, which is a sphere since $W_T$ is finite. It follows that $c_T$ and its translates are disks, which are called \emph{Coxeter cells of type $T$}. We denote $\Sigma_L$ with this decomposition into Coxeter cells by $\Sigma_{cc}$. Note that $\Sigma_{cc}$ is a regular CW complex with with poset of cells that can be identified with $W\mathcal{S}:=\{wW_U\mid w\in W, T\in\mathcal{S}\}$. The simplicial structure on $\Sigma_L$ is the geometric realization of the poset $W\mathcal{S}$, hence $\Sigma_L$ is the barycentric subdivision of $\Sigma_{cc}$.

\section{The fattened Davis complex}

We will now construct a complex which is a ``fattened'' version of the Davis complex. This new thickened complex will be a homology manifold with boundary possessing the Davis complex as a $W$-equivariant retract. For the remainder of this article, we suppose that $W$ is an infinite Coxeter group.

\subsection{Construction.} Given a Coxeter system $(W,S)$, we find a compact $P$ with mirror structure $(P_s)_{s\in S}$ as follows. Let $P^\ast$ be a cell complex with vertex set $S$ that is a $GHS^{n-1}$, with $n-1>\dim(L)$, such that the nerve is a subcomplex of $P^\ast$. Take $P$ to be the $(P^\ast,S)$-chamber.

Denote by $\mathcal{P}$ the collection of proper nonempty subsets $T$ of $S$ with $P_T\neq\emptyset$. We denote by $\mathcal{N}_P$ the subcollection of $\mathcal{P}$ corresponding to non-spherical subsets. For $T\in\mathcal{P}$, we denote a neighborhood of the face $P_T$ by $N(P_T)$ and the corresponding closed neighborhood by $\bar{N}(P_T)$.

We begin by building a regular neighborhood of $\partial P$ in $P$. Start by choosing neighborhoods of codimension-$n$ faces so that for any two codimension-$n$ faces $P_U$ and $P_V$ we have $\bar{N}(P_U)\cap \bar{N}(P_V)=\emptyset$. Then we choose neighborhoods of codimension-$(n-1)$ faces so that for any two codimension-$(n-1)$ faces $P_U$ and $P_V$ we have:

\begin{equation}\label{eq:neighborhoods} \bar{N}(P_U)\cap \bar{N}(P_V)\subset N(P_U\cap P_V).\end{equation}

If $U\cup V\notin\mathcal{P}$, then we take $N(P_U\cap P_V)=\emptyset.$ We proceed inductively, employing condition (\ref{eq:neighborhoods}) at each step until we obtain the collection $\{N(P_T)\}_{T\in\mathcal{P}}$. This collection gives us a regular neighborhood of $\partial P$.

Finally, we realize the neighborhoods $\{N(P_T)\}_{T\in\mathcal{P}}$ in the above construction as $\{N_T\times P_T \}_{T\in\mathcal{P}}$, where $N_T$ is a neighborhood of the cone point in $\textnormal{Cone}(\sigma_T)$ and $\sigma_T$ is the geometric cell in $P^\ast$ spanned by $T$ (note that we can always do this, see the discussion in the previous section).

We now define $$\fchamber:=P-\bigcup_{\substack{T\in\mathcal{N}_P}}N(P_T).$$ We call $\fchamber$ the \emph{fattened Davis chamber}.

Note that the mirror structure $(P_s)_{s\in S}$ on $P$ induces a mirror structure $(\fchamber_s)_{s\in S}$ on $\fchamber$. Define $\fdavis_L:=\mathcal{U}(W,\fchamber)$. We call $\fdavis_L$ the \emph{fattened Davis complex}.

Given a $T\in\mathcal{N}_P$, we denote by $\fchamber(T)$ the fattened Davis chamber corresponding to $\sigma_T$ and Coxeter system $(W_T,T)$ (recall that the geometric cell $\sigma_T$ has a natural $W_T$ mirror structure).

\begin{remark}
\label{remark:simplexpolytope}
For any Coxeter system $(W,S)$, one can always find a $P^\ast$ for the above construction: simply let $P^\ast$ be the boundary of the standard $(|S|-1)$-dimensional simplex $\Delta^{|S|-1}$. Then $P$ is the barycentric subdivision of $\Delta^{|S|-1}$, and the Davis chamber $K$ can then be viewed as a subcomplex of the barycentric subdivision of $P$ spanned by the barycenters of spherical faces. One can see this using the language of posets. Note that $K$ is the geometric realization of the poset $\mathcal{S}$ and $P$ is the geometric realization of the poset of proper subsets of $S$. The natural inclusion of posets now induces the desired inclusion of $K$ into $P$. The mirror structure $(K_s)_{s\in S}$ on $K$ is now induced by the mirror structure $(P_s)_{s\in S}$ on $P$. In this case $\mathcal{U}(W,P)$ is the traditional Coxeter complex, and we are essentially viewing $\Sigma_L$ as a subcomplex of the barycentric subdivision of the Coxeter complex.
\end{remark}

\subsection{Properties of $\fdavis_L$.} $W$ is assumed to be infinite, so via the choice of $P$ for construction, the Davis chamber is the subcomplex of $P$ spanned by vertices of $P$ corresponding to spherical faces. Hence we have the following inclusions: $K\subset\fchamber\subset P$ (See Figure \ref{figure:davisinfdavis}).

\begin{figure}[H]
\[\begin{tikzpicture}[scale=1.5]
\tikzstyle{vertex}=[circle, fill=black, draw, inner sep=0pt, minimum size=1pt]
\tikzstyle{every node}=[draw,circle,fill=white,minimum size=0pt,
                            inner sep=0pt]
\begin{scope}[rotate=-23]

\path[transparent]
    (1,1,1) edge node[pos=0.2] (r1) {} node[pos=0.8] (s1) {} (1,-1,-1)
    (1,1,1) edge node[pos=0.2] (r3) {} node[pos=0.8] (u1) {} (-1,1,-1)
    (1,-1,-1) edge node[pos=0.2] (s2) {} node[pos=0.8] (t2) {} (-1,-1,1)
    (-1,-1,1) edge node[pos=0.2] (t3) {} node[pos=0.8] (u3) {} (-1,1,-1)
    (1,-1,-1) edge node[pos=0.2] (s3) {} node[pos=0.8] (u2) {} (-1,1,-1)
    (1,1,1) edge node[pos=0.2] (r2) {} node[pos=0.8] (t1) {} (-1,-1,1)
    ;

%truncated vertex coordinates
\path[transparent]
    (r3) edge (r1)
    (s1) edge (s2)
    (s2) edge node[pos=0.6] (s11) {} (s3)
    (s3) edge node[pos=0.4] (s22) {} (s1)
    (t2) edge (t3)
    (u1) edge node[pos=0.6] (u11) {} (u2)
    (u2) edge node[pos=0.4] (u22) {} (u3)
    (u3) edge (u1)
    (r1) edge node[pos=0.6] (r11) {} (r2)
    (r2) edge node[pos=0.4] (r22) {} (r3)
    (t1) edge node[pos=0.4] (t22) {} (t2)
    (t3) edge node[pos=0.6] (t11) {} (t1)
    ;

%truncated stuff behind Davis chamber
\path[gray,dashed]
    (r11) edge (t22)
    (r22) edge (t11)
    (r11) edge (r22)
    (t11) edge (t22)
    ;
\path[gray,dashed]
    (t3) edge (t11)
    (t2) edge (t22)
    (r3) edge (r22)
    (r1) edge (r11)
    ;
\path[black!60!white, dotted]
    (1,1,1) edge (-1,-1,1)
    ;

%barycenter of tetrahedron
\vertex (b) at (0,0,0) {};

%barycenters of faces
\vertex (f1) at (1/3,-1/3,1/3) {};
\vertex (f2) at (-1/3,1/3,1/3) {};
\vertex (f3) at (-1/3,-1/3,-1/3) {};
\vertex (f4) at (1/3,1/3,-1/3) {};

%barycenters of edges
\vertex (e1) at (1,0,0) {};
\vertex (e2) at (0,-1,0) {};
\vertex (e3) at (0,1,0) {};
\vertex (e4) at (-1,0,0) {};
%davis chamber

\draw[fill=gray!70] (e1.center) -- (f1.center) -- (b.center) -- cycle;
\draw[fill=gray!70] (e1.center) -- (f4.center) -- (b.center) -- cycle;
\draw[fill=gray!70] (f4.center) -- (e3.center) -- (b.center) -- cycle;
\draw[fill=gray!70] (e3.center) -- (f2.center) -- (b.center) -- cycle;
\draw[fill=gray!70] (f2.center) -- (e4.center) -- (b.center) -- cycle;
\draw[fill=gray!70] (e4.center) -- (f3.center) -- (b.center) -- cycle;
\draw[fill=gray!70] (f3.center) -- (e2.center) -- (b.center) -- cycle;
\draw[fill=gray!70] (e2.center) -- (f1.center) -- (b.center) -- cycle;

\path
    (b) edge (f1)
    (b) edge (f2)
    (b) edge (f3)
    (b) edge (f4)
    (b) edge (e1)
    (b) edge (e2)
    (b) edge (e3)
    (b) edge (e4)
    (e1) edge (f1)
    (e1) edge (f4)
    (e2) edge (f1)
    (e2) edge (f3)
    (e4) edge (f3)
    (e4) edge (f2)
    (e3) edge (f2)
    (e3) edge (f4)
    ;

%truncated vertex components
\path
    (s2) edge (s11)
    (s2) edge (s1)
    (s1) edge (s22)
    (t3) edge (t2)
    (u3) edge (u1)
    (u3) edge (u22)
    (u1) edge (u11)
    (r1) edge (r3)
    ;

%truncated edge components
\path
    (s11) edge (s22)
    (s11) edge (u22)
    (s22) edge (u11)
    (u11) edge (u22)
    ;

%connecting edge and vertex components
\path
    (t2) edge (s2)
    (s1) edge (r1)
    (r3) edge (u1)
    (t3) edge (u3)
    ;

%drawing original edges and vertices
\path[dotted]
    (1,-1,-1) edge (s1)
    (1,-1,-1) edge (s2)
    (1,-1,-1) edge (-1,1,-1)
    (-1,1,-1) edge (u1)
    (-1,1,-1) edge (u3)
    (1,1,1) edge (r3)
    (1,1,1) edge (r1)
    (-1,-1,1) edge (t2)
    (-1,-1,1) edge (t3)
    ;

\end{scope}
\end{tikzpicture}\]
\caption{:\text{ }$K\subset \fchamber\subset P$ when $W=D_\infty\times D_\infty$ and $P=\Delta^3$}
\label{figure:davisinfdavis}
\end{figure}
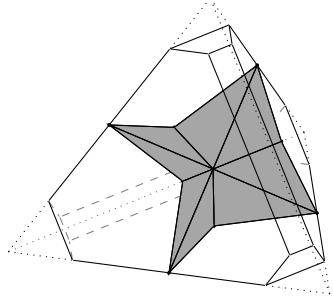

Note that there is a face preserving deformation retraction of $\fchamber$ onto $K$, thus we have the following:

\begin{prop}
\label{prop:defretract}
$\fdavis_L$ $W$-equivariantly deformation retracts onto $\Sigma_L$.
\end{prop}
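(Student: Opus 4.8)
The plan is to promote the face‑preserving deformation retraction of $\fchamber$ onto $K$ noted just above to a $W$‑equivariant one on $\fdavis_L$, using the functoriality of the basic construction.

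First I would record the following elementary principle: if $f\colon X\to X'$ is a map between spaces mirrored over $S$ with $f(X_s)\subseteq X'_s$ for every $s\in S$, then $(w,x)\mapsto(w,f(x))$ descends to a $W$‑equivariant map $\mathcal{U}(W,X)\to\mathcal{U}(W,X')$, because $f(X_s)\subseteq X'_s$ forces $S(x)\subseteq S(f(x))$, so $f$ carries the equivalence relation defining $\mathcal{U}(W,X)$ into the one defining $\mathcal{U}(W,X')$. In particular, a homotopy $H\colon\fchamber\times[0,1]\to\fchamber$ each of whose time slices $H_t$ satisfies $H_t(\fchamber_s)\subseteq\fchamber_s$ induces a $W$‑equivariant homotopy $\hat H\colon\fdavis_L\times[0,1]\to\fdavis_L$: the map $(w,x,t)\mapsto(w,H_t(x))$ on $W\times\fchamber\times[0,1]$ is continuous, respects the equivalence relation as above, and factors through a quotient map (since $[0,1]$ is locally compact Hausdorff), so $\hat H$ is continuous, and $W$‑equivariance is immediate from $W$ acting by left translation on the first coordinate. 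Applying $\mathcal{U}(W,-)$ to the inclusion $K\hookrightarrow\fchamber$ of mirrored spaces — whose mirror structures are compatible, $K_s=K\cap\fchamber_s$ — gives an embedding $\Sigma_L=\mathcal{U}(W,K)\hookrightarrow\mathcal{U}(W,\fchamber)=\fdavis_L$; hence if $H$ is in addition a \emph{strong} deformation retraction of $\fchamber$ onto $K$, the resulting $\hat H$ is a $W$‑equivariant strong deformation retraction of $\fdavis_L$ onto $\Sigma_L$, which is what is claimed.

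The task therefore reduces to producing a strong deformation retraction $H$ of $\fchamber$ onto $K$ with $H_t(\fchamber_s)\subseteq\fchamber_s$ for all $s\in S$ and $t\in[0,1]$ — precisely the ``face preserving deformation retraction'' cited before the statement. Concretely, $P=|\mathcal{F}(P^\ast)|$ is a flag simplicial complex, $K$ is the full subcomplex spanned by the vertices corresponding to spherical subsets of $S$ (together with the cone point $\emptyset$), and $\fchamber=P-\bigcup_{T\in\mathcal{N}_P}N(P_T)$ deletes an open regular neighborhood of the union $\bigcup_{T\in\mathcal{N}_P}P_T$, which is exactly the full subcomplex of $P$ spanned by the vertices not belonging to $K$. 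The standard fact that a simplicial complex strong‑deformation‑retracts onto a full subcomplex once an open regular neighborhood of the complementary full subcomplex has been removed then produces $H$; one carries it out simplex‑by‑simplex on the barycentric subdivision by straight‑line homotopies, and the product realizations $N(P_T)=N_T\times P_T$ together with the nesting condition (\ref{eq:neighborhoods}) allow the homotopy to be chosen so as to preserve every face $P_T$ of $P$. Since $\fchamber_s=\fchamber\cap P_s$ and each $P_s$ is a union of faces of the form $P_T$, this gives $H_t(\fchamber_s)\subseteq\fchamber_s$.

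The main obstacle is exactly this last point: arranging a \emph{single} deformation retraction of $\fchamber$ onto $K$ that is simultaneously strong (fixes $K$ pointwise) and compatible with all the faces $P_T$ at once. This is where the inductive choice of the neighborhoods satisfying (\ref{eq:neighborhoods}) — so that $\bar N(P_U)$ and $\bar N(P_V)$ meet only inside the neighborhood of $P_{U\cap V}$ — and the product splitting $N(P_T)=N_T\times P_T$ are essential, since together they make such a consistent face‑preserving choice possible.
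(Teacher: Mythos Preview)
Your proposal is correct and follows exactly the approach the paper takes: the paper simply asserts, in the sentence immediately preceding the proposition, that ``there is a face preserving deformation retraction of $\fchamber$ onto $K$, thus we have the following,'' and gives no further argument. Your write-up is a careful unpacking of that one sentence --- recording the functoriality of $\mathcal{U}(W,-)$ for mirror-preserving maps, checking continuity of the induced homotopy via local compactness of $[0,1]$, and sketching how the product neighborhoods $N(P_T)=N_T\times P_T$ and the nesting condition (\ref{eq:neighborhoods}) let one build the retraction compatibly with all faces --- so there is nothing to compare: you have supplied the details the paper omits.
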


\begin{prop}
\label{prop:fdavismnfld}
$\fdavis_L$ is a locally compact contractible homology $n$-manifold with boundary $\partial\fdavis_L$.
\end{prop}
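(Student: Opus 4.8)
The plan is to recognize $\fchamber$ as a partially $S$-mirrored homology $n$-manifold with corners and then feed it into Proposition~\ref{prop:basicconstrmnfldbdry}; granting that, local compactness and contractibility come essentially for free. Concretely, if we can equip $\fchamber$ with a $W$-finite mirror structure over $S'=S\cup\{e\}$ in which every nonempty intersection $\fchamber_T$ (for spherical $T\subseteq S'$) is a homology $(n-|T|)$-manifold with boundary $\bigcup_{U\supsetneq T}\fchamber_U$, then Proposition~\ref{prop:basicconstrmnfldbdry} immediately gives that $\fdavis_L=\mathcal{U}(W,\fchamber)$ is a homology $n$-manifold with boundary $\partial\fdavis_L=\mathcal{U}(W,\fchamber_e)$. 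Local compactness follows because $\fchamber$ is compact (the complement in the compact complex $P$ of a union of open sets) and the chambers $\{w\fchamber\}_{w\in W}$ form a locally finite cover of $\fdavis_L$, only finitely many meeting any given point since the local chamber data there is indexed by a coset of a spherical, hence finite, subgroup. Contractibility is already Proposition~\ref{prop:defretract}: $\fdavis_L$ $W$-equivariantly deformation retracts onto $\Sigma_L$, which is contractible.

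So the work lies in producing the mirror structure and verifying the corners condition. For $s\in S$ set $\fchamber_s:=\fchamber\cap P_s$, and let $\fchamber_e:=\fchamber\cap\overline{\bigcup_{T\in\mathcal{N}_P}N(P_T)}$ be the frontier cut into $P$ by the excision, so that $\partial\fchamber=\bigcup_{s\in S'}\fchamber_s$. The $W$-finiteness of this structure is immediate: for a non-spherical $T\subseteq S$ the face $P_T$ lies inside the removed open set $N(P_T)$, whence $\fchamber_T=P_T\cap\fchamber=\emptyset$, and therefore also $\fchamber_{T\cup\{e\}}=\emptyset$; under the convention defining sphericity in $S'$, these are precisely the non-spherical subsets of $S'$.

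The main obstacle is checking that each nonempty $\fchamber_T$, for spherical $T\subseteq S'$, is a homology $(n-|T|)$-manifold with boundary $\bigcup_{U\supsetneq T}\fchamber_U$. The idea is that $\fchamber$ is obtained from $P$ by excising regular neighborhoods of the non-spherical faces, and excision preserves the manifold-with-corners structure. Since $P^\ast$ is a $GHS^{n-1}$ containing the nerve, the discussion following the definition of the $(\Lambda,S)$-chamber implies that $P$ is an $S$-mirrored homology $n$-manifold with corners: for spherical $T$ the cell $\sigma_T$ is a simplex of the nerve, so $(P_T,\partial P_T)$ is a $GHD^{n-|T|}$, that is, a homology $(n-|T|)$-manifold with boundary $\bigcup_{U\supsetneq T}P_U$ (the mirror structure on $P$ simply fails to be $W$-finite, which is why we excise). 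Now one removes the sets $N(P_T)=N_T\times P_T$ for $T\in\mathcal{N}_P$; because each is realized as the product of a neighborhood $N_T$ of the cone point of $\textnormal{Cone}(\sigma_T)$ with $P_T$, deleting it from $P$ locally looks like removing a product regular neighborhood of a corner stratum from a homology manifold with corners, which again produces a homology manifold with corners whose new stratum is the frontier of $N(P_T)$. The compatibility relation~\eqref{eq:neighborhoods} built into the inductive construction of $\{N(P_T)\}$ is exactly what ensures that where several of these neighborhoods overlap the excisions fit together without destroying any corner. I would therefore run an induction on codimension paralleling the inductive choice of the neighborhoods, computing at each stage the local homology of the interior and of each corner stratum of $\fchamber$ by the same kind of excision and Mayer--Vietoris arguments used in the proof of Proposition~\ref{prop:basicconstrmnfld} --- the only change being that along the newly created $\fchamber_e$-stratum the relevant auxiliary spaces are acyclic rather than sphere-like. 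Once these local computations are in place, $\fchamber$ is a partially $S$-mirrored homology $n$-manifold with corners and the proposition follows as above.
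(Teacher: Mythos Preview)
Your proposal is correct and follows essentially the same route as the paper: equip $\fchamber$ with an auxiliary $e$-mirror, verify that it is a partially $S$-mirrored homology $n$-manifold with corners, and invoke Proposition~\ref{prop:basicconstrmnfldbdry}, with contractibility coming from Proposition~\ref{prop:defretract} and local compactness from compactness of $\fchamber\subset P$. The paper's argument is in fact terser than your sketch---it observes that for spherical $T\subset S$ the cell $\sigma_T$ is a simplex of $L\subset P^\ast$, so $(P_T,\partial P_T)$ is a $GHD^{n-|T|}$, and then handles the case $e\in T$ by passing to $\partial\fchamber_{T-\{e\}}$, leaving the preservation of the corners structure under regular-neighborhood excision implicit rather than running the inductive local-homology argument you outline.
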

\begin{proof}
Since $\Sigma_L$ is contractible, it follows from Proposition \ref{prop:defretract} that $\fdavis_L$ is contractible. Moreover, $\fchamber$ is compact since it is closed in $P$ ($P$ is compact), so $\fdavis_L$ is locally compact.

As before, give $\fchamber$ the mirror structure $(\fchamber_s)_{s\in S}$ induced from $P$, and declare $\fchamber_e=\partial\fchamber-\bigcup_{T\in\mathcal{S}_{>\emptyset}} (\fchamber_T-\partial\fchamber_T)$, where $e$ is the identity element of $W$. According to Proposition \ref{prop:basicconstrmnfldbdry}, it remains to show that $\fchamber$ is a partially $S$-mirrored homology manifold with corners. Let $S'=S\cup\{e\}$ and note that by construction $\fchamber_T=\emptyset$ if and only if $T$ is not spherical. So, we are done if we show that for every spherical $T\subset S'$, $\fchamber_T$ has dimension $n-|T|$.

If $e\notin T$, then we are done since $(P_T,\partial P_T)$ is a $GHD^{n-|T|}$. This is because $P$ is by definition the $(P^\ast,S)$-chamber and the nerve $L$ was assumed to be a subcomplex of $P^\ast$. Hence, since $T$ is spherical, $\sigma_T$, the geometric cell in $P^\ast$ corresponding to $T$, is a simplex of dimension $|T|-1$. Therefore the dimension of $P_T$ is equal to $n-\dim \sigma_T-1=n-|T|$.

If $e\in T$, then $U=T-\{e\}$ is spherical, and by the above discussion $\fchamber_U$ has dimension $n-|U|=n-|T|+1$. Then $\fchamber_T=\fchamber_U\cap\fchamber_e=\partial\fchamber_U$ has dimension $n-|T|$.
\end{proof}

\begin{remark} If $P=\Delta^{|S|-1}$, then the Coxeter complex $\mathcal{U}(W,P)$ is a PL-manifold away from faces with infinite stabilizers. This is because the links of faces corresponding to spherical subsets $T$ are homeomorphic to the Coxeter complex of the corresponding group $W_T$. Since $W_T$ is finite, this Coxeter complex is homeomorphic to a sphere of appropriate dimension. Since we obtain $\fdavis_L$ by removing neighborhoods of non-spherical faces (faces with infinite stabilizers), it follows that $\fdavis_L$ is a PL-manifold with boundary.
\end{remark}

\subsection{The structure of $\partial\fdavis_L$.} The main goal of this section is to understand the structure of $\partial\fdavis_L$. The first proposition will tell us that $\partial\fchamber$ can be broken up into pieces, each of which has a nice product structure. This decomposition of $\partial\fchamber$ then leads us to a cover of $\partial\fdavis_L$ which will be used to study the algebraic topology of $\partial\fdavis_L$.

For $T\in\mathcal{N}_P$, define $$C_T=\partial N(P_T)-\bigcup_{\substack{U\in\mathcal{N}_P}} N(P_U),$$ $$\Lambda_T=P_T-\bigcup_{\substack{U\in\mathcal{N}_P\\T\subset U}}N(P_U).$$

\begin{prop}
\label{prop:boundarycomponentsareproducts}
\begin{enumerate}[(i)]
\item Suppose that $U,V\in\mathcal{N}_P$. Then $C_U\cap C_V\neq\emptyset$ if and only if $U\subset V$ or $V\subset U$.
  \item If $T\in\mathcal{N}_P$ then $$C_T\approx \fchamber(T)\times\Lambda_T.$$
  \item Suppose that $T_1,T_2\in\mathcal{N}_P$ with $T_1\subset T_2$. Then $$C_{T_1}\cap C_{T_2}\approx\fchamber(T_1)\times\Lambda_{T_2}.$$
\end{enumerate}
\end{prop}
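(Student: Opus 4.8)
The plan is to deduce all three parts from the local product structure of $P$ near its faces --- the regular neighborhoods $N(P_T)$ were realized as $N_T\times P_T$ precisely so that this is available --- together with the inductive compatibility condition \eqref{eq:neighborhoods}.

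I would prove (i) directly. Suppose $U$ and $V$ are incomparable. If $U\cup V\notin\mathcal{P}$, then by construction $N(P_U\cap P_V)=\emptyset$, so $\bar N(P_U)$ and $\bar N(P_V)$ are disjoint and $C_U\cap C_V=\emptyset$. If $U\cup V\in\mathcal{P}$, then $U\cup V$ is non-spherical (it contains the non-spherical $U$), hence $U\cup V\in\mathcal{N}_P$; condition \eqref{eq:neighborhoods} gives $\bar N(P_U)\cap\bar N(P_V)\subseteq N(P_{U\cup V})$, and $C_U$ is disjoint from $N(P_{U\cup V})$ by definition, so again $C_U\cap C_V=\emptyset$. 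Conversely, if $U\subseteq V$ I would exhibit a single point of $\partial N(P_U)\cap\partial N(P_V)$ avoiding every $N(P_W)$, using the explicit local product model near $P_V$.

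For (ii), near $P_T$ the chamber $P$ is modelled on $\textnormal{Cone}(\sigma_T)\times P_T$ with $P_T=\{\textnormal{cone point}\}\times P_T$ and $N(P_T)=N_T\times P_T$, so $\partial N(P_T)\approx\sigma_T\times P_T$ (a link-level copy of $\sigma_T$ times the face). I would then sort the removed neighborhoods $N(P_W)$ meeting $\partial N(P_T)$ by the position of $W$: those incomparable to $T$ contribute nothing new, since by \eqref{eq:neighborhoods} we have $N(P_W)\cap\partial N(P_T)\subseteq N(P_{W\cup T})$ with $W\cup T\supsetneq T$; those with $W\supsetneq T$ affect only the $P_T$-factor, and deleting all of them leaves exactly $\Lambda_T$; and those with $W\subsetneq T$ affect only the $\sigma_T$-factor, where the $W\in\mathcal{N}_P$ with $W\subsetneq T$ index exactly the faces removed in forming the fattened Davis chamber $\fchamber(T)$. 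Since the two families of deletions act on independent factors, $C_T\approx\fchamber(T)\times\Lambda_T$.

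For (iii) I would bootstrap from (ii). Writing $C_{T_1}\approx\fchamber(T_1)\times\Lambda_{T_1}$, the face $P_{T_2}$ (with $T_1\subsetneq T_2$) sits in the $\Lambda_{T_1}$-factor as one of the faces whose neighborhood has been removed, while near $P_{T_2}$ the face $P_{T_1}$ itself carries a product neighborhood in $P$. Intersecting $C_{T_1}$ with $\partial N(P_{T_2})$ and re-sorting the deleted neighborhoods --- those $W$ with $T_1\subsetneq W\subsetneq T_2$, those with $W\supsetneq T_2$, and the incomparable ones (disposed of by (i)) --- should present $C_{T_1}\cap C_{T_2}$ as $\fchamber(T_1)\times(\,\cdot\,)\times\Lambda_{T_2}$, the middle factor being the link of $\sigma_{T_1}$ in $\sigma_{T_2}$ with the neighborhoods of all intermediate faces deleted; one then checks this middle factor reduces to a point, giving $C_{T_1}\cap C_{T_2}\approx\fchamber(T_1)\times\Lambda_{T_2}$. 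This last step is the main obstacle: one must reconcile the three product structures in play --- the normal structures of $P_{T_1}$ in $P$, of $P_{T_2}$ in $P$, and of $P_{T_2}$ inside $P_{T_1}$ --- so that the double intersection genuinely splits off $\fchamber(T_1)$ and $\Lambda_{T_2}$, and then verify that the leftover factor is trivial rather than merely contractible; this is where the inductive choice of neighborhoods dictated by \eqref{eq:neighborhoods} must be used in full. Parts (i) and (ii) should be routine once the local product picture is set up with care.
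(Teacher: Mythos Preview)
Your treatment of (i) and (ii) matches the paper's almost exactly: the incomparable case of (i) is disposed of via condition~\eqref{eq:neighborhoods}, and (ii) is obtained by sorting the deleted neighborhoods $N(P_W)$ according to whether $W\subsetneq T$, $W\supsetneq T$, or $W$ is incomparable to $T$, with the first two families acting on independent factors of $\sigma_T\times P_T$.

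For (iii), however, you are making the problem harder than it is. The paper does not introduce a third ``middle'' factor at all. Instead, it observes that the product decompositions of $C_{T_1}$ and $C_{T_2}$ from (ii) are \emph{compatible} in the ambient $P$ (because all the $N(P_W)$ were chosen once and for all as $N_W\times P_W$), so that
\[
C_{T_1}\cap C_{T_2}\;\approx\;\bigl(\fchamber(T_1)\cap\fchamber(T_2)\bigr)\times\bigl(\Lambda_{T_1}\cap\Lambda_{T_2}\bigr).
\]
Each factor is then computed directly: since $T_1\subset T_2$ we have $\sigma_{T_1}\cap\sigma_{T_2}=\sigma_{T_1}$, and the extra deletions coming from $\fchamber(T_2)$ (indexed by non-spherical $V\subset T_2$) either already appear among those for $\fchamber(T_1)$ or miss $\sigma_{T_1}$ entirely, giving $\fchamber(T_1)\cap\fchamber(T_2)\approx\fchamber(T_1)$. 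Dually, $P_{T_2}\subset P_{T_1}$ forces $\Lambda_{T_1}\cap\Lambda_{T_2}\approx\Lambda_{T_2}$. That is the whole argument.

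Your proposed triple-product picture, with a middle factor equal to the link of $\sigma_{T_1}$ in $\sigma_{T_2}$ minus neighborhoods of intermediate faces, is not needed and is in fact misleading: that link is a sphere, not a point, and after the deletions it is certainly not a single point. What is really happening is that the $\sigma_{T_2}$-direction of $C_{T_2}$, when intersected with $C_{T_1}$, collapses onto the $\sigma_{T_1}$-direction of $C_{T_1}$ rather than contributing an independent factor. Once you see that the two product structures share the same normal coordinates (this is exactly what the global choice of $N(P_W)=N_W\times P_W$ buys you), the ``reconciliation'' you flag as the main obstacle becomes a two-line computation.
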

\begin{proof}
For (i), one implication is obvious. If $U\subset V$, then $P_V$ is a face of $P_U$. Thus $C_U\cap C_V\neq\emptyset$. For the reverse implication, suppose that $U\not\subset V$ and $V\not\subset U$. By construction and condition (\ref{eq:neighborhoods}), either $\bar{N}(P_U)\cap \bar{N}(P_V)=\emptyset$ or $\bar{N}(P_U)\cap \bar{N}(P_V)\subset N(P_U\cap P_V)$. The former case immediately implies that $C_U\cap C_V=\emptyset$, and the latter case implies that the intersection $\partial N(P_U)\cap \partial N(P_V)$ is removed at some point in the construction of the fattened Davis chamber, hence $C_U\cap C_V=\emptyset$.

For (ii), recall that we have realized the collection $\{N(P_T)\}_{T\in\mathcal{N}_P}$ as neighborhoods $\{N_T\times P_T \}_{T\in\mathcal{N}_P}$, where $N_T$ is a neighborhood of the cone point in $\textnormal{Cone}(\sigma_T)$.

Now, for each $U\subset T$, let $\alpha_U$ denote the face in $\sigma_T$ corresponding to $P_U$. More precisely, $\sigma_T$ has a $W_T$ mirror structure, and $\alpha_U$ is the intersection of mirrors corresponding to $U\subset T$. We can express the neighborhoods in the construction of $\fchamber(T)$ as neighborhoods $\{\alpha_U\times N'_U\}_{\substack{U\in\mathcal{N}_P\\U\subset T}}$, where $N'_U$ is a neighborhood of the cone point in $\textnormal{Cone}(\textnormal{Lk}(\alpha_U,\sigma_T))$. Here $\textnormal{Lk}(\alpha_U,\sigma_T)$ denotes the link of the face $\alpha_U$ in $\sigma_T$. In particular,

$$\fchamber(T)=\sigma_T-\bigcup_{\substack{U\in\mathcal{N}_P\\U\subset T}}\alpha_U\times N'_U.$$

Now, we have that $\textnormal{Lk}(\alpha_U,\sigma_T)\approx \sigma_U$, so $N'_U\approx N_U$. Hence

$$\fchamber(T)\approx \sigma_T-\bigcup_{\substack{U\in\mathcal{N}_P\\U\subset T}} P_U\times N_U.$$

Moreover, we can write $\Lambda_T$ and $C_T$ as
$$\Lambda_T=P_T-\bigcup_{\substack{U\in\mathcal{N}_P\\T\subset U}}P_U\times N_U,$$

$$C_T=(\sigma_T\times P_T)-\bigcup_{\substack{U\in\mathcal{N}_P\\U\neq T}} P_U\times N_U.$$

We now show that $C_T\approx \fchamber(T)\times\Lambda_T$. Note that $\fchamber(T)\times\Lambda_T=(\fchamber(T)\times P_T)\cap (\sigma_T\times\Lambda_T)$, so we begin unwinding definitions. We first observe that

$$\fchamber(T)\times P_T\approx \left(\sigma_T-\bigcup_{\substack{U\in\mathcal{N}_P\\U\subset T}} P_U\times N_U\right)\times P_T\approx(\sigma_T\times P_T)-\bigcup_{\substack{U\in\mathcal{N}_P\\U\subset T}} P_U\times N_U.$$

This is because $P_T$ is a face of each of the $P_U$'s. Similarly, we have

$$\sigma_T\times\Lambda_T= \sigma_T\times \left(P_T-\bigcup_{\substack{U\in\mathcal{N}_P\\T\subset U}}P_U\times N_U\right)\approx (\sigma_T\times P_T)-\bigcup_{\substack{U\in\mathcal{N}_P\\T\subset U}} P_U\times N_U.$$

This follows from the fact that $P_U$'s are faces of $P_T$. Thus we have shown that $\fchamber(T)\times\Lambda_T=(\fchamber(T)\times P_T)\cap (\sigma_T\times\Lambda_T)\approx C_T$, therefore proving (ii).

We now prove (iii). By (ii), $$C_{T_1}\cap C_{T_2}\approx(\fchamber(T_1)\cap\fchamber(T_2))\times(\Lambda_{T_1}\cap\Lambda_{T_2}).$$ It now simply remains to unwind the definitions. Since $T_1\subset T_2$, it follows that $P_{T_2}$ is a face of $P_{T_1}$. In particular, $\sigma_{T_1}\cap \sigma_{T_2}=\sigma_{T_1}$ and hence
\begin{align*}
\fchamber(T_1)\cap\fchamber(T_2)&\approx \sigma_{T_1}\cap \sigma_{T_2}-\bigcup_{\substack{U,V\in\mathcal{N}_P\\U\subset T_1\\V\subset T_2}} N(P_U)\cup N(P_V)\\
&\approx \sigma_{T_1}-\bigcup_{\substack{U\in\mathcal{N}_P\\U\subset T_1}} N(P_U)\\
&\approx \fchamber(T_1)
\end{align*}

A similar computation shows that $\Lambda_{T_1}\cap\Lambda_{T_2}\approx \Lambda_{T_2}$, thus completing the proof of the proposition.
\end{proof}

\begin{prop}
\label{prop:boundaryfdavis}$$\partial\fdavis_L=\bigcup_j \bigsqcup_{T\in\mathcal{N}_P^{(j)}}\mathcal{U}(W,C_T),$$ where $\mathcal{N}_P^{(j)}=\{T\in \mathcal{N}_P\mid\text{Card}(T)=j\}$.
\end{prop}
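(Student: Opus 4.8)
The plan is to reduce everything to an understanding of the auxiliary mirror $\fchamber_e$ and then transport the answer through the basic construction. By Proposition \ref{prop:basicconstrmnfldbdry} we already know $\partial\fdavis_L=\mathcal{U}(W,\fchamber_e)$, so it is enough to prove two things: first, that inside $\fchamber$ the auxiliary mirror decomposes as $\fchamber_e=\bigcup_{T\in\mathcal{N}_P}C_T$; and second, that passing to $\mathcal{U}(W,-)$ turns this union into $\bigcup_{T\in\mathcal{N}_P}\mathcal{U}(W,C_T)$, which can then be regraded by cardinality.

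For the first point I would argue as follows. Since $W$ is infinite, $\fchamber$ is obtained from the homology manifold with boundary $P$ by deleting the open neighborhoods $N(P_T)$ for $T\in\mathcal{N}_P$; hence its boundary splits as $\partial\fchamber=\bigl(\partial P\cap\fchamber\bigr)\cup\bigcup_{T\in\mathcal{N}_P}\bigl(\partial N(P_T)\cap\fchamber\bigr)$, and the second union is by definition $\bigcup_{T\in\mathcal{N}_P}C_T$. It remains to match this with $\fchamber_e=\partial\fchamber-\bigcup_{U\in\mathcal{S}_{>\emptyset}}(\fchamber_U-\partial\fchamber_U)$. On one hand, every point of $\partial P\cap\fchamber$ lies in some mirror $\fchamber_s$ (because $\partial P$ is the union of the $P_s$), its full $S$-support $V$ is spherical (otherwise $V\in\mathcal{N}_P$ and the point would have been deleted with $N(P_V)$), and since $V$ is the entire support the point lies in $\fchamber_V-\partial\fchamber_V$; thus $\partial P\cap\fchamber$ contributes nothing to $\fchamber_e$. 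On the other hand, for $T\in\mathcal{N}_P$ the set $C_T$ is disjoint from every $\fchamber_U-\partial\fchamber_U$ with $U$ spherical: using the product model $C_T\approx\fchamber(T)\times\Lambda_T$ of Proposition \ref{prop:boundarycomponentsareproducts}(ii) together with the realization of the neighborhoods as $N_T\times P_T$, a point of $C_T$ sits over a point of $P_T$ in a direction normal to $P_T$ and away from every deleted non-spherical stratum, so its support remains non-spherical; hence $C_T\subseteq\fchamber_e$. Combining the two inclusions gives $\fchamber_e=\bigcup_{T\in\mathcal{N}_P}C_T$.

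For the second point, recall that in $\mathcal{U}(W,\fchamber)$ two pairs $(w,x)$ and $(w',x')$ are identified only when $x=x'$; consequently $\mathcal{U}(W,-)$ commutes with unions of mirror-stable subspaces, so $\partial\fdavis_L=\mathcal{U}(W,\bigcup_T C_T)=\bigcup_{T\in\mathcal{N}_P}\mathcal{U}(W,C_T)$. Now group the index set by cardinality, writing $\bigcup_{T\in\mathcal{N}_P}=\bigcup_j\bigcup_{T\in\mathcal{N}_P^{(j)}}$. If $T_1\neq T_2$ both have cardinality $j$, then neither is contained in the other, so Proposition \ref{prop:boundarycomponentsareproducts}(i) gives $C_{T_1}\cap C_{T_2}=\emptyset$; since points of $\mathcal{U}(W,C_{T_1})$ and $\mathcal{U}(W,C_{T_2})$ can coincide only if their underlying chamber points do, it follows that $\mathcal{U}(W,C_{T_1})\cap\mathcal{U}(W,C_{T_2})=\emptyset$, so the inner union is genuinely disjoint. (The outer union over $j$ is not disjoint, reflecting the nonempty intersections of Proposition \ref{prop:boundarycomponentsareproducts}(iii) when $T_1\subsetneq T_2$.) This is the asserted decomposition.

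The hardest step is the inclusion $C_T\subseteq\fchamber_e$ in the middle paragraph — i.e.\ checking that the new boundary created by the fattening is exactly the auxiliary mirror and does not leak into the interiors of the genuine $S$-indexed mirrors. This calls for a careful local analysis of $\partial\fchamber$ near each face $P_T$: one must decompose the frontier of $N(P_T)\approx N_T\times P_T$ according to which mirrors $\fchamber_s$ it meets, and use the compatible nesting of neighborhoods of the non-spherical faces enforced by condition (\ref{eq:neighborhoods}) to see that every stratum of $\partial\fchamber$ with spherical support in fact comes from $\partial P$ rather than from some $C_T$. Everything else is bookkeeping with the basic construction and with Proposition \ref{prop:boundarycomponentsareproducts}.
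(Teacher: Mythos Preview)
The paper's own proof is a single sentence: the decomposition is ``clear by construction'' and the inner disjointness is Proposition~\ref{prop:boundarycomponentsareproducts}(i). Your disjointness argument is exactly the paper's, and routing the first part through Proposition~\ref{prop:basicconstrmnfldbdry} and the identity $\fchamber_e=\bigcup_{T\in\mathcal{N}_P}C_T$ is a sensible way to unpack ``by construction''. However, your verification of that identity contains two genuine errors.

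First, the assertion that every point of $\partial P\cap\fchamber$ lies in $\fchamber_V-\partial\fchamber_V$ (for $V$ its full support) is false. A point of $C_T\cap\partial P$ has spherical support $V$, but since it sits on the truncation frontier $\partial N(P_T)$, its image in $\fchamber_V=P_V-\bigcup_U N(P_U)$ lies on the \emph{new} boundary of $\fchamber_V$, i.e.\ in $\partial\fchamber_V$. So $(\partial P\cap\fchamber)\cap\fchamber_e$ is not empty; it is exactly $\bigcup_T(C_T\cap\partial P)$, and your two pieces of $\partial\fchamber$ genuinely overlap there. Second, the phrase ``its support remains non-spherical'' for points of $C_T$ is incorrect: a generic point of $C_T\approx\fchamber(T)\times\Lambda_T$ (away from every mirror of $\sigma_T$ and from $\partial P_T$) lies in no $P_s$ whatsoever, so its support is $\emptyset$. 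The correct reason $C_T\subseteq\fchamber_e$ is the one your final paragraph gestures at: if $x\in C_T\cap\fchamber_U$ with $U$ spherical, then $x\in P_U\cap\partial N(P_T)$ lies on the frontier of the open set $N(P_T)\cap P_U$ inside $P_U$, hence on $\partial\fchamber_U$, so $x\notin\fchamber_U-\partial\fchamber_U$. With these two fixes the equality $\fchamber_e=\bigcup_T C_T$ holds and the rest of your argument goes through.
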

\begin{proof}
The fact that one can decompose $\partial\fdavis_L$ in this way is clear by construction, and the second union is in fact a disjoint union by Proposition \ref{prop:boundarycomponentsareproducts} (i).
\end{proof}

\section{Weighted $L^2$-(co)homology}

In this section we present a brief introduction to weighted $L^2$-(co)homology. Further details can be found in \cite{Davis,DDJO,Dymara}. We then compile some results pertaining to the weighted $L^2$-(co)homology of the Davis complex $\Sigma_L$, as well as for $\fdavis_L$ and $\partial\fdavis_L$.

Let $(W,S)$ be a Coxeter system. For the remainder of this article, let $\Q=(q_s)_{s\in S}$ denote an $S$-tuple of positive real numbers satisfying $q_s=q_{s'}$ whenever $s$ and $s'$ are conjugate in $W$. Set $\Q^{-1}=(q^{-1}_s)_{s\in S}$. If $w=s_1\dotsm s_n$ is a reduced expression for $w\in W$, we define $q_w:=q_{s_1}\dotsm q_{s_n}.$

\subsection{Hecke--von Neumann algebras.} Let $\mathbb{R}(W)$ denote the group algebra of $W$, and let $\{e_w\}_{w\in W}$ denote the standard basis on $\mathbb{R}(W)$ (here $e_w$ denotes the characteristic function of $\{w\}$). Given a multiparameter $\Q$ of positive real numbers as above, we deform the standard inner product on $\mathbb{R}(W)$ to an inner product
$$\left<e_w,e_{w'}\right>_\Q=\begin{cases} q_w &\mbox{if } w=w' \\
0 & \mbox{otherwise.} \end{cases}$$

Using the multiparameter $\Q$, one can give $\mathbb{R}(W)$ the structure of a $\emph{Hecke algebra}$. We will denote $\mathbb{R}(W)$ with this inner product and Hecke algebra structure by $\mathbb{R}_\Q (W)$, and $L^2_\Q(W)$ will denote the Hilbert space completion of $\mathbb{R}_\Q (W)$ with respect to $\left<\hskip1mm,\hskip1mm\right>_\Q$. There is a natural anti-involution on $\mathbb{R}_\Q (W)$, which implies that there is an associated \emph{Hecke-von Neumann algebra} $\mathcal{N}_\Q(W)$ acting on the right on $L^2_\Q(W)$. It is the algebra of all bounded linear endomorphisms of $L^2_\Q(W)$ which commute with the left $\mathbb{R}_\Q (W)$-action.

Define the $\emph{von Neumann trace}$ of $\phi\in\mathcal{N}_\Q(W)$ by $\textnormal{tr}_{\mathcal{N}_\Q}(\phi):=\left<e_1\phi,e_1\right>_\Q$, and similarly for an $(n\times n)$-matrix with coefficients in $\phi\in\mathcal{N}_\Q(W)$ by taking the sum of the von Neumann traces of elements on the diagonal. This allows us to attribute an nonnegative real number called the \emph{von Neumann dimension} for any closed subspace of an $n$-fold direct sum of copies of $L^2_\Q(W)$ which is stable under the $\mathbb{R}_\Q (W)$-action, called a \emph{Hilbert $\mathcal{N}_\Q$-module}. If $V\subseteq (L^2_\Q(W))^n$ is a Hilbert $\mathcal{N}_\Q$-module, and $p_V: (L^2_\Q(W))^n\rightarrow (L^2_\Q(W))^n$ is the orthogonal projection onto $V$ (note that $p_V\in\mathcal{N}_\Q(W)$), then define $$\dim_{\mathcal{N}_\Q} V:=\textnormal{tr}_{\mathcal{N}_\Q}(p_V).$$

\subsection{Weighted $L^2$-(co)homology.} Suppose $(W,S)$ is a Coxeter system and that $X$ is a mirrored finite $CW$-complex over $S$. Set $\mathcal{U}=\mathcal{U}(W,X)$. We first orient the cells of $X$ and equivariantly extend this orientation to $\mathcal{U}$ in such a way so that if $\sigma$ is a positively oriented cell of $X$, then $w\sigma$ is positively oriented for each $w\in W$.

We define a measure on the $w$-orbit of an $i$-cell $\sigma\in X$ by $$\mu_\Q(w\sigma)=q_u,$$ where $u$ is $(\emptyset, S(\sigma))$-reduced and $S(\sigma):=\{s\in S|\sigma\subseteq X_s\}.$ This extends to a measure on the $i$-cells $\mathcal{U}^{(i)}$, which we also denote by $\mu_\Q$. Define the \emph{$\Q$-weighted $i$-dimensional $L^2$-(co)chains} on $\mathcal{U}$ to be the Hilbert space: $$L_\Q^2C_i(\mathcal{U})=L_\Q^2C^i(\mathcal{U})=L^2(\mathcal{U}^{(i)},\mu_\Q).$$ These are infinite $W$-equivariant square summable (with respect to $\mu_\Q$) real-valued $i$-chains. The inner product is given by $$\left<f,g\right>_\Q=\sum_\sigma f(\sigma)g(\sigma)\mu_\Q(\sigma),$$ and we denote the induced norm by $||\text{ }||_\Q$.

The boundary map $\partial_i: L_\Q^2C_i(\mathcal{U})\rightarrow L_\Q^2C_{i-1}(\mathcal{U})$ and coboundary map $\delta^i: L_\Q^2C_i(\mathcal{U})\rightarrow L_\Q^2C_{i+1}(\mathcal{U})$ are defined by the usual formulas, however there is one caveat: they are not adjoints with respect to this inner product whenever $\Q\neq\mathbf{1}$. Thus one remedies this issue by perturbing the boundary map $\partial_i$ to $\partial_i^\Q$: $$\partial_i^\Q(f)(\sigma^{i-1})=\sum_{\sigma^{i-1}\subset\alpha^{i}} [\sigma:\alpha]\mu_\Q(\alpha)\mu_\Q^{-1}(\sigma) f(\alpha).$$

A simple computation shows that $\partial_i^\Q$ is the adjoint of $\delta^i$ with respect to the weighted inner product, hence $\left(L_\Q^2C_\ast(\mathcal{U}),\partial_i^\Q\right)$ is a chain complex. We now define the \emph{reduced $\Q$-weighted $L^2$-(co)homology} by $$L_\Q^2H_i(\mathcal{U})=\text{Ker}\partial_i^\Q/\overline{\text{Im}\partial_{i+1}^\Q},$$ $$L_\Q^2H^i(\mathcal{U})=\text{Ker}\delta^i/\overline{\text{Im}\delta^{i-1}}.$$

The Hodge Decomposition implies that $L_\Q^2H^i(\mathcal{U})\cong L_\Q^2H_i(\mathcal{U})=\ker\partial^{\Q}_i \cap \ker\delta^i$ and versions of Eilenberg-Steenrod axioms hold for this homology theory. There is also a weighted version of Poincar\'{e} duality: If $\mathcal{U}$ is a locally compact homology $n$-manifold with boundary $\partial\mathcal{U}$, then $$L_\Q^2H_i(\mathcal{U})\cong L_{\Q^{-1}}^2H_{n-i}(\mathcal{U},\partial\mathcal{U}).$$

One can also assign the von Neumann dimension to each of the Hilbert spaces $L^2_\Q H_i(\mathcal{U})$ (as they are Hilbert $\mathcal{N}_\Q$-modules). We denote this by $L_\Q^2b_i(\mathcal{U})$ and call it the \emph{$i$-th $L_\Q^2$-Betti number of $\mathcal{U}$}. We then define the \emph{weighted Euler characteristic of $\mathcal{U}$}: $$\chi_\Q(\mathcal{U})=\sum (-1)^iL_\Q^2b_i(\mathcal{U}).$$

\subsection{An alternate definition of $L_\Q^2$-Betti numbers.} As discussed in \cite[$\S$6]{DO2}, there is an alternate approach in defining $L_\Q^2$-Betti numbers using the ideas of L\"{u}ck \cite{Luckbook}. The main point is that there is an equivalence of categories between the category of Hilbert $\mathcal{N}_\Q$-modules and projective modules of $\mathcal{N}_\Q$. Hence one can define $\dim_{\mathcal{N}_\Q} M$ for a finitely generated projective $\mathcal{N}_\Q$-module $M$ which agrees with the dimension of the corresponding Hilbert $\mathcal{N}_\Q$-module. So, $\dim_{\mathcal{N}_\Q} M$ for an arbitrary $\mathcal{N}_\Q$-module is then defined to be the dimension of its projective part.

As before, suppose $(W,S)$ is a Coxeter system and that $X$ is a mirrored finite $CW$-complex over $S$. Set $\mathcal{U}=\mathcal{U}(W,X)$. As in \cite{Luckbook}, define $H_\ast^W(\mathcal{U},\mathcal{N}_\Q(W))$ to be the homology of the $\mathcal{N}_\Q(W)$-chain complex $C_\ast^W(\mathcal{U},\mathcal{N}_\Q(W)):=\mathcal{N}_\Q(W)\otimes_{\mathbb{R}_\Q (W)} C_\ast(\mathcal{U})$, where $C_\ast(\mathcal{U})$ is the cellular chain complex of $\mathcal{U}$ with the induced $\mathbb{R}_\Q (W)$-structure. We then define $$L_\Q^2b_i(\mathcal{U}):=\dim_{\mathcal{N}_\Q}H_i^W(\mathcal{U},\mathcal{N}_\Q(W)).$$ This definition does in fact agree with the previous one, and the advantage of this definition is that we do no need to take closures of images as in the definition of reduced $\Q$-weighted $L^2$-(co)homology (this is particularly useful when dealing with spectral sequences).

\subsection{Some results for $\Sigma_L$.} In this section we begin by stating some previous results on the weighted $L^2$-(co)homology of $\Sigma_L$. We start with the following result of Dymara, which explicitly computes $L_\Q^2b_0(\Sigma_L)$.

\begin{prop}[{\cite[Theorem 7.1, Theorem 10.3]{Dymara}}]
\label{prop:betti0}
$L_\Q^2b_0(\Sigma_L)\neq 0$ if and only if $\Q\in\mathcal{R}$. Moreover, when $\Q\in\mathcal{R}$, $L_\Q^2b_k(\Sigma_L)=0$ for $k>0$.
\end{prop}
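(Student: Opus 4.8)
The plan is to prove both directions using the growth series formula together with known properties of the weighted $L^2$-(co)homology of $\Sigma_L$. The key input is that $\Sigma_L$ is contractible and that $\mathcal{U}(W,K)$ admits a $W$-equivariant cell structure, so there is a weighted Euler characteristic identity. First I would recall that, since $\Sigma_L$ is contractible and $W$ acts properly and cocompactly on it, the weighted Euler characteristic can be computed from the chamber $K$ via the formula $\chi_\Q(\Sigma_L)=\sum_{k}(-1)^k L_\Q^2 b_k(\Sigma_L)$, and on the other hand $\chi_\Q(\Sigma_L)=\sum_{T\in\mathcal{S}}(-1)^{|T|} \mu_\Q(\text{orbit of cells of type }T)$; carrying this bookkeeping out (this is Dymara's computation, or the one recorded in \cite{Davis}) gives $\chi_\Q(\Sigma_L)=1/W(\Q)$, where $W(\Q)$ is the growth series evaluated at $\mathbf{t}=\Q$, which makes sense exactly when $\Q\in\mathcal{R}$, and by Theorem \ref{thm:growthseriesformula} equals $\sum_{T\in\mathcal{S}}(-1)^{|T|}/W_T(\Q^{-1})$.

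For the ``if'' direction, suppose $\Q\in\mathcal{R}$. The heart of the matter is Dymara's decomposition of the weighted $L^2$-(co)chain complex: when $\Q$ lies in the region of convergence there is an element $a_\Q\in \mathbb{R}_\Q(W)$ (a suitable normalization of $\sum_w q_w e_w$, which is square-summable precisely because $\Q\in\mathcal{R}$) that spans a one-dimensional $W$-stable subspace, and this subspace represents a nonzero class in $L_\Q^2 H_0(\Sigma_L)$. Concretely, the constant function on $W$ weighted appropriately is in the kernel of $\partial_0^\Q$ and is not a boundary, so $L_\Q^2 b_0(\Sigma_L)\neq 0$. To see that $L_\Q^2 b_k(\Sigma_L)=0$ for $k>0$, I would invoke the contractibility of $\Sigma_L$ together with the fact that for $\Q\in\mathcal{R}$ the reduced weighted $L^2$-homology coincides with the (unreduced) homology $H_\ast^W(\Sigma_L,\mathcal{N}_\Q(W))$ of the complex $\mathcal{N}_\Q(W)\otimes_{\mathbb{R}_\Q(W)}C_\ast(\Sigma_L)$; since $C_\ast(\Sigma_L)$ is a resolution (acyclic augmented complex) and $\mathcal{N}_\Q(W)$ is flat over $\mathbb{R}_\Q(W)$ for the relevant purposes, the higher homology vanishes and $H_0^W$ is $\mathcal{N}_\Q(W)\otimes_{\mathbb{R}_\Q(W)}\mathbb{R}$, whose von Neumann dimension is positive exactly when $\Q\in\mathcal{R}$. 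This is precisely the content cited from \cite[Theorem 10.3]{Dymara}, so the cleanest route is to quote it, but the Euler characteristic computation above gives independent confirmation that $\chi_\Q(\Sigma_L)=1/W(\Q)>0$, forcing $L_\Q^2b_0>0$.

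For the ``only if'' direction, suppose $\Q\notin\mathcal{R}$. Then the element $\sum_w q_w e_w$ is no longer in $L_\Q^2(W)$, so the candidate $0$-cycle disappears; more to the point, one shows directly that $\ker\partial_0^\Q$ contains no nonzero $W$-invariant-up-to-scaling element, i.e.\ the only element of $L_\Q^2 C_0(\Sigma_L)$ in the kernel of $\partial_0^\Q$ and orthogonal to the image of $\partial_1^\Q$ is zero. I would handle this by the standard argument that a weighted $0$-cycle must be ``harmonic,'' hence constant on the vertex set $W$ in the weighted sense, and a nonzero weighted-constant function is square-summable iff $\sum_w q_w<\infty$, i.e.\ iff $\Q\in\mathcal{R}$; since we are assuming the contrary, $L_\Q^2 H_0(\Sigma_L)=0$. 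Again this is exactly \cite[Theorem 7.1]{Dymara}.

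The main obstacle is the ``$k>0$ vanishes when $\Q\in\mathcal{R}$'' half: contractibility of $\Sigma_L$ gives vanishing of ordinary homology, but transporting that to vanishing of \emph{reduced} weighted $L^2$-homology requires knowing that the image of $\partial_{k+1}^\Q$ is already closed (no completion needed) in the relevant degrees, or equivalently working with the $\mathcal{N}_\Q(W)$-module definition and a flatness/dimension-shifting argument; this is the technically delicate point and is exactly where Dymara's analysis (spectral-gap estimates controlling $a_\Q$ and the Laplacian) does the real work. Accordingly, I would not reprove it from scratch but cite \cite[Theorem 7.1, Theorem 10.3]{Dymara} and simply note that the growth-series identity of Theorem \ref{thm:growthseriesformula} pins down the single nonzero Betti number as $L_\Q^2b_0(\Sigma_L)=1/W(\Q)$ when $\Q\in\mathcal{R}$.
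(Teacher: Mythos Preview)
The paper gives no proof of this proposition at all; it is simply stated and attributed to \cite[Theorem 7.1, Theorem 10.3]{Dymara}. Your proposal ultimately does the same thing---you explicitly defer the substantive points to Dymara---so the approaches coincide. The explanatory sketch you add is reasonable and consistent with Dymara's actual argument (the harmonic $0$-cycle is essentially $\sum_w q_w e_w$, square-summable iff $\Q\in\mathcal{R}$); the one claim to be careful with is the offhand ``$\mathcal{N}_\Q(W)$ is flat over $\mathbb{R}_\Q(W)$,'' which is not a standard fact, but you already flag this as the delicate step and correctly hand it back to the cited reference.
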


Dymara also computes the weighted Euler characteristic of $\Sigma_L$, revealing the connection between weighted $L^2$-(co)homology of $\Sigma_L$ and the growth series of the corresponding Coxeter group $W$.

\begin{prop}[{\cite[Corollary 3.4]{Dymara}}]
\label{prop:eulerchar}
$$\chi_\Q(\Sigma_L)=\frac{1}{W(\Q)}.$$
\end{prop}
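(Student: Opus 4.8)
The plan is to compute the weighted Euler characteristic directly from the cellular structure of $\Sigma_L$ and then match the result against the growth-series formula of Theorem \ref{thm:growthseriesformula}. Since the weighted Euler characteristic is a homotopy-and-chain-complex invariant of $L^2_\Q C_\ast(\Sigma_L)$, it equals the alternating sum $\sum_i (-1)^i \dim_{\mathcal{N}_\Q} L^2_\Q C_i(\Sigma_L)$ provided the relevant chain groups are finitely generated Hilbert $\mathcal{N}_\Q$-modules, which they are because $K$ is a finite mirrored complex. So the first step is to identify $\dim_{\mathcal{N}_\Q} L^2_\Q C_i(\Sigma_L)$. The $i$-cells of $\Sigma_L = \mathcal{U}(W,K)$ are the $W$-translates $w\sigma$ of $i$-cells $\sigma$ of the chamber $K$, with the redundancy that $w\sigma = w'\sigma$ when $w^{-1}w' \in W_{S(\sigma)}$; thus the $i$-cells are parametrized by pairs $(\sigma, wW_{S(\sigma)})$. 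The measure $\mu_\Q(w\sigma) = q_u$, where $u$ is the $(\emptyset, S(\sigma))$-reduced representative of $wW_{S(\sigma)}$, is exactly the setup in which $L^2_\Q C_i(\Sigma_L)$ decomposes as an orthogonal direct sum, over $i$-cells $\sigma$ of $K$, of the induced modules $L^2_\Q(W) \otimes_{L^2_\Q(W_{S(\sigma)})} \mathbb{R}$ — or more concretely, of the closed subspace of $L^2_\Q(W)$ spanned by the $(\emptyset, S(\sigma))$-reduced elements with the $q_u$-weighting. The von Neumann dimension of this summand is $1/W_{S(\sigma)}(\Q)$: this is a standard computation (it appears in Dymara and in \cite{DDJO}), coming from the fact that the orthogonal projection onto the span of cosets has trace equal to the reciprocal of the $\Q$-weighted size of the finite group $W_{S(\sigma)}$, namely $1/\sum_{w \in W_{S(\sigma)}} q_w = 1/W_{S(\sigma)}(\Q)$.

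Granting that, the second step is purely combinatorial bookkeeping. The $i$-cells of the Davis chamber $K$ are, via the identification of $K$ with the geometric realization of the poset $\mathcal{S}$ of spherical subsets, the flags $T_0 \subsetneq T_1 \subsetneq \cdots \subsetneq T_i$ of spherical subsets, and for such a flag the stabilizer subset $S(\sigma)$ is the bottom term $T_0$. Therefore
\begin{equation*}
\chi_\Q(\Sigma_L) = \sum_{i \geq 0} (-1)^i \sum_{T_0 \subsetneq \cdots \subsetneq T_i} \frac{1}{W_{T_0}(\Q)} = \sum_{T \in \mathcal{S}} \frac{1}{W_T(\Q)} \left( \sum_{i \geq 0} (-1)^i \#\{ \text{flags of length } i \text{ with bottom } T \} \right).
\end{equation*}
The inner alternating sum, for fixed $T$, is the reduced Euler characteristic contribution of the order complex of $\mathcal{S}_{> T}$ shifted by one; more precisely it equals $\sum_{i} (-1)^i (\text{number of chains } T = T_0 \subsetneq \cdots \subsetneq T_i)$, and a short manipulation (either via the cone point at $T$, or by recognizing this as the value at $-1$ of a characteristic polynomial) identifies it with $(-1)^{?}$ times something that one wants to be $(-1)^{|T|}$. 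I expect the cleanest route is to reindex by the \emph{top} term of the flag rather than the bottom, or to use the Coxeter-cell structure $\Sigma_{cc}$ instead of the simplicial one: in $\Sigma_{cc}$ the cells are the cosets $wW_T$, $T \in \mathcal{S}$, a single cell of dimension $|T|$ per coset, so that $\chi_\Q(\Sigma_L) = \chi_\Q(\Sigma_{cc}) = \sum_{T \in \mathcal{S}} (-1)^{|T|} / W_T(\Q)$ directly, with no inner alternating sum to resolve.

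The third step is then immediate: by Theorem \ref{thm:growthseriesformula} applied with $\mathbf{t} = \Q^{-1}$, we have $\sum_{T \in \mathcal{S}} (-1)^{|T|}/W_T(\Q) = 1/W(\Q^{-1})$ evaluated appropriately — one must be slightly careful with the inversion $\mathbf{t} \leftrightarrow \mathbf{t}^{-1}$, but since $W(\mathbf{t})$ is a rational function and the formula reads $1/W(\mathbf{t}) = \sum_{T \in \mathcal{S}} (-1)^{|T|}/W_T(\mathbf{t}^{-1})$, substituting $\mathbf{t} = \Q$ gives exactly $\sum_{T} (-1)^{|T|}/W_T(\Q^{-1}) = 1/W(\Q)$, and one uses that $W_T$ finite makes $W_T(\Q^{-1})$ and $W_T(\Q)$ interchangeable only up to the well-known identity $q_w = q_{w^{-1}}$, so in fact $W_T(\Q) = W_T(\Q^{-1})$ fails in general — hence the correct statement is obtained by reading the measure as $\mu_\Q$ with reduced representatives, which matches $W_T(\Q^{-1})$, and the identity closes. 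The main obstacle, then, is not any deep input but getting the three bookkeeping conventions — the weighting $q_u$ vs.\ $q_{u^{-1}}$, the $\mathbf{t}$ vs.\ $\mathbf{t}^{-1}$ in Davis's formula, and the simplicial vs.\ Coxeter-cell indexing — to line up consistently; using $\Sigma_{cc}$ and being disciplined about which group's growth series carries which inversion is what makes the argument go through cleanly, and this is exactly what I would check most carefully.
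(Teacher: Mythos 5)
The paper gives no internal proof here: Proposition \ref{prop:eulerchar} is cited verbatim from Dymara, so the comparison must be with the implicit computation behind that citation. Your skeleton is nonetheless the natural one: write $\chi_\Q$ as the alternating sum of von Neumann dimensions of the chain modules, pass to the Coxeter cellulation $\Sigma_{cc}$ so that there is exactly one orbit of $|T|$-cells for each $T\in\mathcal{S}$, compute the dimension of the type-$T$ piece, and invoke Theorem \ref{thm:growthseriesformula}. The gap is in the dimension count, and it is not a bookkeeping convention but a missing geometric observation.

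You assert that the type-$T$ chain module has von Neumann dimension $1/W_T(\Q)$, justified as the trace of the orthogonal projection onto ``the span of cosets.'' That projection is right multiplication by the Hecke symmetrizer $a_T=\frac{1}{W_T(\Q)}\sum_{w\in W_T}e_w$, which satisfies $e_s a_T=q_s a_T$ for $s\in T$ and has $\textnormal{tr}_{\mathcal{N}_\Q}(a_T)=1/W_T(\Q)$. This is the correct idempotent for a module on which the stabilizer $W_T$ acts trivially (e.g.\ a vertex of $\Sigma_L$ with stabilizer $W_T$, or any simplex of $\Sigma_L$, which its stabilizer fixes pointwise). But a Coxeter cell $\sigma$ of type $T$ in $\Sigma_{cc}$ has dimension $|T|$, and each $s\in T$ carries $\sigma$ to itself \emph{reversing orientation}, so on the cellular chain complex $s\cdot\sigma=-\sigma$. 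The Hecke deformation of that sign action has $e_s$ acting by $-1$, and the corresponding idempotent (the complement $e_1-a_T$, which satisfies $e_s(e_1-a_T)=-(e_1-a_T)$ for $s\in T$) has trace $1/W_T(\Q^{-1})$. The dimension of the type-$T$ chain module of $\Sigma_{cc}$ is therefore $1/W_T(\Q^{-1})$, and then $\chi_\Q(\Sigma_{cc})=\sum_{T\in\mathcal{S}}(-1)^{|T|}/W_T(\Q^{-1})=1/W(\Q)$ falls out of Theorem \ref{thm:growthseriesformula} at $\mathbf{t}=\Q$ with no juggling of inversions. You do notice the sign trouble in your third step, but the closing sentence (``the correct statement is obtained by reading the measure as $\mu_\Q$ with reduced representatives, which matches $W_T(\Q^{-1})$'') does not identify the source of the inversion: the measure is read off reduced representatives in both computations, and that alone does not flip $\Q$ to $\Q^{-1}$. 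What does is the orientation-reversal; without it your formula yields $1/W(\Q^{-1})$. A quick sanity check: for $W=D_\infty$ with $q_s=q_t=q<1$ one has $1/W(\Q)=(1-q)/(1+q)>0$, consistent with $\chi_\Q=L^2_\Q b_0>0$ by Proposition \ref{prop:betti0}, whereas the formula $1-2/W_{\{s\}}(\Q)=(q-1)/(1+q)$ is negative.
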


Recall that $\Sigma_{cc}$ denotes $\Sigma_L$ with the Coxeter cellulation. The following proposition states that if we compute the weighted $L^2$-(co)homology with respect to either cellulation, then we get the same answer.

\begin{prop}[{\cite[Theorem 5.5]{Dymara}}]
$$L^2_\Q H_\ast(\Sigma_L)\cong L^2_\Q H_\ast(\Sigma_{cc}).$$
\end{prop}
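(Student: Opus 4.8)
The plan is to exhibit a chain homotopy equivalence between the weighted $L^2$-chain complexes of $\Sigma_L$ and $\Sigma_{cc}$ that is compatible with the $\mathbb{R}_\Q(W)$-module structures and is bounded in both directions. Since $\Sigma_L$ is the barycentric subdivision of $\Sigma_{cc}$ (as noted in Section \ref{section:coxcell}, the simplicial structure on $\Sigma_L$ is the geometric realization of the poset $W\mathcal{S}$ of cells of $\Sigma_{cc}$), there is a standard subdivision chain map $\mathrm{sd}\colon C_\ast(\Sigma_{cc})\to C_\ast(\Sigma_L)$ taking a Coxeter cell to the alternating sum of the simplices in its barycentric subdivision, together with a chain homotopy inverse. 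First I would recall that $W$ acts cellularly on both complexes with the same orbit poset of cells $W\mathcal{S}$, so the Coxeter cell $c_T$ and its subdivision $b c_T$ have the same stabilizer $W_T$, and hence the measures $\mu_\Q$ on the two cellulations are defined using the same $(\emptyset,T)$-reduced representatives; this is what will let the comparison respect the weighted inner products up to bounded distortion.

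The key steps, in order, are: (1) Identify $L^2_\Q C_\ast(\Sigma_{cc})$ as a Hilbert $\mathcal{N}_\Q$-module chain complex and likewise for $\Sigma_L$, using the alternate (algebraic) definition $C^W_\ast(\mathcal{U},\mathcal{N}_\Q(W)) = \mathcal{N}_\Q(W)\otimes_{\mathbb{R}_\Q(W)} C_\ast(\mathcal{U})$ from the subsection ``An alternate definition of $L_\Q^2$-Betti numbers'', which removes all analytic issues about closures of images. (2) Check that the subdivision map $\mathrm{sd}$ and a chosen chain homotopy inverse $g$ are $W$-equivariant and built from finitely many cells per $W$-orbit, so that after tensoring with $\mathcal{N}_\Q(W)$ they become morphisms of $\mathcal{N}_\Q(W)$-chain complexes. (3) Verify that the chain homotopies $gs\mathrm{d}\simeq \mathrm{id}$ and $\mathrm{sd}\,g\simeq\mathrm{id}$ on $\Sigma_{cc}$ and $\Sigma_L$ respectively are also $W$-equivariant and finitely supported modulo $W$; this is automatic because both cellulations are $W$-cocompact and the subdivision homotopy can be chosen canonically (e.g., via the cone construction of Munkres, which is equivariant since $W$ permutes the cells of $\Sigma_{cc}$). (4) Conclude that $\mathrm{id}\otimes\mathrm{sd}$ induces an isomorphism on homology $H^W_\ast(\Sigma_{cc},\mathcal{N}_\Q(W))\cong H^W_\ast(\Sigma_L,\mathcal{N}_\Q(W))$, hence (by the Hodge decomposition identification $L^2_\Q H_\ast\cong H^W_\ast(-,\mathcal{N}_\Q(W))$, after passing to the projective/reduced part) an isomorphism $L^2_\Q H_\ast(\Sigma_L)\cong L^2_\Q H_\ast(\Sigma_{cc})$ of Hilbert $\mathcal{N}_\Q$-modules.

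The main obstacle I anticipate is \emph{boundedness}: for ordinary (unweighted) $L^2$-homology the subdivision map is automatically bounded because $W$ acts with finitely many orbits of cells and by isometries on each orbit, but for $\Q\neq\mathbf 1$ the measure $\mu_\Q$ assigns different weights $q_u$ to different translates of a cell, so one must check that $\mathrm{sd}$, $g$, and the homotopies do not blow up these weights. The point to verify is that if a simplex $\sigma'$ of $b c_T$ lies in the face $K_{T'}$ with $T'\supseteq$ (a single vertex), then its stabilizer, and hence its $\mu_\Q$-weight relative to $c_T$, differs from $\mu_\Q(c_T)$ by a \emph{uniformly bounded} factor depending only on the finitely many cell-orbits of $\Sigma_{cc}$ — precisely because $T$ is spherical, so $W_T$ is finite and all the relevant $q_w$ for $w\in W_T$ lie in a fixed finite set bounded away from $0$ and $\infty$. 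Once this uniform comparison of weights is in hand, the chain maps and homotopies are bounded operators between the Hilbert $\mathcal{N}_\Q$-module chain complexes, and the rest is the formal argument above. Alternatively, one sidesteps boundedness entirely by working only with the algebraic model $\mathcal{N}_\Q(W)\otimes_{\mathbb{R}_\Q(W)} C_\ast(-)$ and invoking $\dim_{\mathcal{N}_\Q}$-invariance of chain homotopy equivalences of $\mathcal{N}_\Q(W)$-complexes; I would present the proof in that language to keep it clean, remarking that the two definitions of $L^2_\Q$-homology agree.
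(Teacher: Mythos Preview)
The paper does not prove this proposition itself; it is quoted as Theorem~5.5 of Dymara's \emph{Thin buildings} with no argument supplied. There is therefore nothing in-paper to compare your proposal against.

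That said, your proposal is correct and is essentially the natural proof. The boundedness check is exactly right: a simplex of $\Sigma_L$ lying inside the Coxeter cell $wc_T$ corresponds to a flag with minimum $w'W_{T_0}\subset wW_T$, and if $u$ is the $(\emptyset,T)$-reduced representative of $wW_T$ then the $(\emptyset,T_0)$-reduced representative of $w'W_{T_0}$ has the form $uv$ with $v\in W_T$ (using that $l(ux)=l(u)+l(x)$ for $x\in W_T$); hence the ratio of $\mu_\Q$-weights is $q_v$, which ranges over the finite set $\{q_v:v\in W_T\}$. Since there are only finitely many spherical $T$, the subdivision map, a chain-homotopy inverse, and the cone-formula homotopies are all bounded $W$-equivariant operators between the weighted $L^2$-chain complexes, yielding the Hilbert $\mathcal{N}_\Q$-module isomorphism claimed. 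One minor imprecision: the sentence ``$c_T$ and its subdivision $bc_T$ have the same stabilizer $W_T$'' is misleading, since individual simplices of $bc_T$ have stabilizers $W_{T_0}\leq W_T$ rather than $W_T$ itself --- but your subsequent weight-ratio argument handles this correctly. Also note that the purely algebraic route via $\mathcal{N}_\Q(W)\otimes_{\mathbb{R}_\Q(W)}C_\ast(-)$ only literally identifies the $\mathcal{N}_\Q$-homology modules (hence the Betti numbers and the projective parts); it is the bounded-operator argument that gives the honest isomorphism of reduced $L^2_\Q$-homology as Hilbert modules.
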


In conjunction with Proposition \ref{prop:eulerchar}, the following theorem explicitly computes the weighted $L^2$-(co)homology of Coxeter groups which act properly and cocompactly by reflections on Euclidean space.

\begin{theorem}[{\cite[Corollary 14.5]{DDJO}}]
\label{thm:weightedeuclidean}
Suppose that $W$ is a Euclidean reflection group with nerve $L$.
\begin{itemize}
  \item If $\Q\leq\mathbf{1}$, then $L_\Q^2H_\ast(\Sigma_L)$ is concentrated in dimension $0$.
  \item If $\Q\geq\mathbf{1}$, then $L_\Q^2H_\ast(\Sigma_L)$ is concentrated in dimension $n$.
\end{itemize}
\end{theorem}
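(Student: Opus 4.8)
The plan is to reduce the statement to two known inputs: the weighted Euler characteristic formula (Proposition \ref{prop:eulerchar}) together with the growth-series identity (Theorem \ref{thm:growthseriesformula}), and the vanishing criterion for $L_\Q^2 b_0$ in terms of the region of convergence (Proposition \ref{prop:betti0}). The key point is that a Euclidean reflection group has a very restricted region of convergence, and this constrains where the $L^2_\Q$-(co)homology can be supported.

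First I would recall that if $W$ is a Euclidean reflection group acting properly and cocompactly on $\mathbb{E}^n$, then $\Sigma_L$ is homeomorphic to $\mathbb{E}^n$ (an $n$-manifold without boundary), so weighted Poincaré duality gives $L_\Q^2 H_i(\Sigma_L)\cong L_{\Q^{-1}}^2 H_{n-i}(\Sigma_L)$. Hence it suffices to prove the case $\Q\leq\mathbf{1}$; the case $\Q\geq\mathbf{1}$ follows by applying duality and replacing $\Q$ by $\Q^{-1}$, which reverses the inequality and sends concentration in degree $0$ to concentration in degree $n$. So the whole theorem reduces to: \emph{if $\Q\leq\mathbf{1}$ then $L_\Q^2 H_\ast(\Sigma_L)$ is concentrated in degree $0$.}

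Next, for the reduced case $\Q\leq\mathbf{1}$: I claim $\Q\in\mathcal{R}$. Indeed, the growth series of a Euclidean (affine) Coxeter group is a rational function whose radius of convergence at $\mathbf{1}$ is governed by the exponents/degrees of the associated finite Weyl group; concretely, $W(\mathbf{t})$ converges on an open neighborhood of the closed region $\{\Q\leq\mathbf{1}\}$ minus possibly the point $\mathbf{1}$ itself — and in fact the standard computation (using Theorem \ref{thm:growthseriesformula} and the product formula for the growth series of an affine Weyl group) shows $W(\Q)$ converges precisely when $\Q$ lies in the open region cut out by the fundamental weights, which contains all $\Q\leq\mathbf{1}$ with at least one strict inequality, and contains $\Q=\mathbf{1}$ on the boundary only in rank-one degenerate situations. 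The cleanest route is: for $\Q\leq\mathbf{1}$ and $\Q\neq\mathbf{1}$ one checks $1/W(\Q)\neq 0$ via the alternating sum $\sum_{T\in\mathcal{S}}(-1)^{|T|}/W_T(\Q^{-1})$ and concludes $\Q\in\mathcal{R}$, whence Proposition \ref{prop:betti0} gives concentration in degree $0$ immediately. One then handles $\Q=\mathbf{1}$ separately: this is the ordinary $L^2$-homology of $\mathbb{E}^n$ with the proper cocompact $W$-action, which by the usual $L^2$-Betti number computation for amenable (here virtually abelian) groups vanishes in all degrees — but here one wants concentration in degree $0$; in fact for the Euclidean case at $\Q=\mathbf{1}$ the reduced $L^2$-homology vanishes in \emph{every} degree including $0$, and the statement "concentrated in degree $0$" is then vacuously/trivially true since the zero module is concentrated in any single degree. (Alternatively $\Q=\mathbf{1}$ is obtained as a limit, or one simply notes $\mathbf{1}\in\bar{\mathcal{R}}$ and invokes the $\bar{\mathcal{R}}$ computations of \cite{DDJO}.)

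The main obstacle is the precise determination of $\mathcal{R}$ for a Euclidean reflection group and verifying that $\{\Q\leq\mathbf{1}\}\subseteq\bar{\mathcal{R}}$ with the interior $\{\Q<\mathbf{1}\}\subseteq\mathcal{R}$. This is where the structure theory of affine Weyl groups enters: one uses that $W$ is a semidirect product of a finite Weyl group $W_0$ with a lattice, so its growth series factors through the Poincaré polynomial of $W_0$ (a product $\prod (1+t+\dots+t^{d_i-1})$ over the degrees $d_i$) divided by a product of the form $\prod(1-t^{e_i})$ coming from the translation lattice; analyzing the poles of this rational function pins down $\mathcal{R}$. Once that is in hand, everything else is a direct appeal to Propositions \ref{prop:betti0} and \ref{prop:eulerchar} and weighted Poincaré duality, so I expect the growth-series/region-of-convergence bookkeeping to be the only genuinely substantive step.
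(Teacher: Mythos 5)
First, note that the paper does not prove this statement: it is imported verbatim as \cite[Corollary 14.5]{DDJO}, so there is no ``paper's proof'' to compare against. What can be assessed is whether your proposed argument is itself sound, and there it has a genuine gap.

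Your reduction via weighted Poincar\'e duality to the case $\Q\leq\mathbf{1}$ is fine, and so is the plan to invoke Proposition~\ref{prop:betti0} whenever $\Q\in\mathcal{R}$. The problem is the bridging claim that ``$\Q\leq\mathbf{1}$ and $\Q\neq\mathbf{1}$'' forces $\Q\in\mathcal{R}$. That is false as soon as $W$ splits as a product of two infinite Euclidean factors. Take $W=D_\infty\times D_\infty$ acting on $\mathbb{R}^2$: each $D_\infty$ factor $\langle s_i,t_i\rangle$ has growth series $(1+q_{s_i})(1+q_{t_i})/(1-q_{s_i}q_{t_i})$, so $\mathcal{R}=\{q_{s_1}q_{t_1}<1\}\cap\{q_{s_2}q_{t_2}<1\}$. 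Choosing $q_{s_1}=q_{t_1}=1$ and $q_{s_2}=q_{t_2}=1/2$ gives $\Q\leq\mathbf{1}$, $\Q\neq\mathbf{1}$, and $\Q\notin\mathcal{R}$, so Proposition~\ref{prop:betti0} does not apply and your argument breaks. (The statement is still true at this $\Q$, because $L^2_{\Q}H_\ast$ vanishes identically via the K\"unneth formula and the vanishing of the first factor at $\Q_1=\mathbf{1}$, but that needs to be said and proved; it is not covered by either of your two cases $\Q\in\mathcal{R}$ or $\Q=\mathbf{1}$.)

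There is also a logical flaw in the way you try to establish membership in $\mathcal{R}$. Theorem~\ref{thm:growthseriesformula} is an identity of rational functions; evaluating the right-hand side at some $\Q$ and finding it finite and nonzero tells you that the rational function $W(\mathbf{t})$ takes a finite value there, not that the power series $\sum_w t_w$ converges there. (Compare: $(1+t)/(1-t)$ is finite at $t=2$ but the series diverges.) So ``$1/W(\Q)\neq 0\Rightarrow \Q\in\mathcal{R}$'' is a non-sequitur. The clean route is the one DDJO actually take: polynomial growth of a Euclidean reflection group gives $\{\Q<\mathbf{1}\}\subset\mathcal{R}$ directly, hence $\{\Q\leq\mathbf{1}\}\subset\bar{\mathcal{R}}$, and one then invokes the general $\bar{\mathcal{R}}$-vanishing theorem of \cite{DDJO}; but that general theorem is the substantive content here, and specializing it is precisely what Corollary~14.5 is, so your last alternative (``invoke the $\bar{\mathcal{R}}$ computations'') is not so much a proof as a restatement of the citation.
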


The following lemma says that we can compute the weighted $L^2$-Betti numbers of any acyclic complex of the form $\mathcal{U}(W,X)$, with $X$ finite, on which $W$ acts properly, and get the same answer. Thus we will sometimes write $L_\Q^2 b_k(W)$ instead of $L_\Q^2 b_k(\Sigma_L)$ to denote the \emph{$k$-th $L_\Q^2$-Betti number of $W$}.

\begin{lemma}
\label{lemma:l2homologyacycliccomplexes}
Let $(W,S)$ be a Coxeter system and suppose that $X$ and $X'$ are finite mirrored CW-complexes with $\mathcal{U}(W,X)$ and $\mathcal{U}(W,X')$ both acyclic and both admitting proper $W$-action. Then for every $k\geq 0$, $$L_\Q^2 b_k(\mathcal{U}(W,X))= L_\Q^2 b_k(\mathcal{U}(W,X')).$$
\end{lemma}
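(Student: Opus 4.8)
The plan is to exploit the alternate, algebraic definition of the $L_\Q^2$-Betti numbers in terms of $\dim_{\mathcal{N}_\Q} H_i^W(\mathcal{U},\mathcal{N}_\Q(W))$, so that the statement becomes a statement about the $\mathcal{N}_\Q(W)$-chain complexes attached to $\mathcal{U}(W,X)$ and $\mathcal{U}(W,X')$. The key observation is that $\mathcal{N}_\Q(W)\otimes_{\mathbb{R}_\Q(W)} C_\ast(\mathcal{U}(W,X))$ computes an algebraically defined homology theory which, because $\mathcal{U}(W,X)$ is acyclic and carries a proper, cocompact (since $X$ is finite) $W$-action, should depend only on $W$ and not on the particular model. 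I would phrase this via the general machinery of L\"{u}ck: the dimension function $\dim_{\mathcal{N}_\Q}$ is additive on short exact sequences of $\mathcal{N}_\Q(W)$-modules and continuous, and for a proper cocompact $W$-CW-pair the homology groups $H_i^W(-,\mathcal{N}_\Q(W))$ are computed by a finite chain complex of finitely generated free $\mathcal{N}_\Q(W)$-modules, so their dimensions are honest von Neumann dimensions of Hilbert $\mathcal{N}_\Q$-modules.

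The heart of the argument is a comparison between the two models. First I would reduce to comparing each of $\mathcal{U}(W,X)$ and $\mathcal{U}(W,X')$ with a common third space; the cleanest choice is to form the product mirrored complex $X\times X'$ (with mirror structure $(X_s\times X'_s)_{s\in S}$, or one could instead use a $W$-CW approximation to the homotopy pushout) and compare $\mathcal{U}(W,X\times X')$ to each factor. There are $W$-equivariant projection maps $\mathcal{U}(W,X\times X')\to\mathcal{U}(W,X)$ and $\mathcal{U}(W,X\times X')\to\mathcal{U}(W,X')$; since $X'$ (resp. $X$) is acyclic and the mirrors behave well, the fibers are acyclic and one gets, via a Mayer--Vietoris or spectral-sequence argument over the chambers (exactly in the style of the proof of Proposition~\ref{prop:basicconstrmnfld}), that these projections induce isomorphisms on $H_\ast^W(-,\mathcal{N}_\Q(W))$. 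Concretely: filter $\mathcal{U}(W,X\times X')$ by the cells of $X$; the associated graded is, for each cell $\sigma$ of $X$ with stabilizer $W_{S(\sigma)}$, a copy of $\mathcal{N}_\Q(W)\otimes_{\mathcal{N}_\Q(W_{S(\sigma)})} H_\ast^{W_{S(\sigma)}}(\mathcal{U}(W_{S(\sigma)},X'),\mathcal{N}_\Q(W_{S(\sigma)}))$, and acyclicity of the relevant pieces of $X'$ collapses the spectral sequence so only the term matching $C_\ast^W(\mathcal{U}(W,X),\mathcal{N}_\Q(W))$ survives. Running the same argument with the roles of $X$ and $X'$ reversed, and using $\dim_{\mathcal{N}_\Q}$-invariance under isomorphism of $\mathcal{N}_\Q(W)$-modules, yields $L_\Q^2 b_k(\mathcal{U}(W,X)) = L_\Q^2 b_k(\mathcal{U}(W,X\times X')) = L_\Q^2 b_k(\mathcal{U}(W,X'))$ for all $k$.

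The main obstacle I anticipate is making the spectral-sequence/filtration comparison fully rigorous in the weighted setting: one must check that the subtle point about $\partial_i^\Q$ not being the naive adjoint does not interfere (it does not, because the algebraic definition uses the ordinary cellular chain complex tensored up, and the weighting only enters through $\mathcal{N}_\Q(W)$ and its trace), and one must verify that the $\mathbb{R}_\Q(W)$-module structure on $C_\ast(\mathcal{U})$ is the expected permutation-type module so that the induction/restriction formulas for $\dim_{\mathcal{N}_\Q}$ over special subgroups $W_T$ apply; these are the compatibility facts recorded in \cite{DDJO,DO2}. A secondary technical point is that $X\times X'$ need not itself be a CW-complex in the naive product sense if $X,X'$ are not regular, but one can pass to a subdivision or simply invoke that both $X$ and $X'$ may be assumed simplicial (e.g.\ via the model in Remark~\ref{remark:simplexpolytope}) without loss of generality; alternatively one avoids the product altogether by comparing both complexes directly to the Davis complex $\Sigma_L = \mathcal{U}(W,K)$ using that $K$ is a deformation retract inside a suitably chosen $P$, which is exactly the situation of Proposition~\ref{prop:defretract}. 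I would present whichever of these is shortest, but the product-comparison route is the most self-contained.
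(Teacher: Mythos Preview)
Your approach can be made to work, but it is considerably more elaborate than what the paper does. The paper's proof is a two-line homological algebra argument: since $\mathcal{U}(W,X)$ and $\mathcal{U}(W,X')$ are both acyclic with proper cocompact $W$-action, their cellular chain complexes $C_\ast(\mathcal{U}(W,X))$ and $C_\ast(\mathcal{U}(W,X'))$ are both finite-length projective $\mathbb{R}_\Q(W)$-resolutions of the trivial module (properness plus characteristic zero makes each permutation module $\mathbb{R}[W/W_T]$ with $W_T$ finite projective). The fundamental lemma of homological algebra then gives an $\mathbb{R}_\Q(W)$-linear chain homotopy equivalence between them, and applying the functor $\mathcal{N}_\Q(W)\otimes_{\mathbb{R}_\Q(W)}-$ preserves this chain homotopy equivalence. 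That is the entire argument.

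What you do differently is construct an explicit geometric intermediary $\mathcal{U}(W,X\times X')$ and run a filtration/spectral-sequence comparison. This is morally the ``topological'' proof of the comparison theorem for projective resolutions, and your sketch of the $E_1$-page is correct: the fiber over a cell $\sigma$ is indeed $\mathcal{U}(W_{S(\sigma)},X')$, which is acyclic by the same criterion that makes $\mathcal{U}(W,X')$ acyclic, and for finite $W_{S(\sigma)}$ one has $H_\ast^{W_{S(\sigma)}}(\mathcal{U}(W_{S(\sigma)},X'),\mathcal{N}_\Q)$ concentrated in degree $0$. So the spectral sequence collapses as you claim. The payoff of your route is that it is more hands-on and avoids invoking projectivity of the chain modules; the cost is that you must verify several compatibility statements (the product mirror structure is $W$-finite, the projections are well-defined and $W$-equivariant, the associated graded identifies as stated) that the algebraic argument bypasses entirely. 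Your fallback suggestion of comparing both to $\Sigma_L$ is also valid but still more work than necessary: once you notice that \emph{any} two projective resolutions are equivariantly chain homotopy equivalent, no particular third model is needed.
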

\begin{proof}
Set $\mathcal{U}=\mathcal{U}(W,X)$ and $\mathcal{U}'=\mathcal{U}(W,X')$. Since $\mathcal{U}$ and $\mathcal{U}'$ are both acyclic, it follows that the respective cellular chain complexes $C_\ast(\mathcal{U})$ and $C_\ast(\mathcal{U}')$ are are chain homotopic. This chain homotopy induces a chain homotopy of the chain complexes $C_\ast^W(\mathcal{U},\mathcal{N}_\Q(W))$ and $C_\ast^W(\mathcal{U'},\mathcal{N}_\Q(W))$.
\end{proof}

In fact, Bestvina constructed such a complex for any finitely generated Coxeter group.

\begin{theorem}[\cite{Bestvina}]
\label{thm:bestvinacpx} Let $W$ be a finitely generated Coxeter group. Then $W$ acts properly and cocompactly on an acyclic $\vcd W$-dimensional complex of the form $\mathcal{U}(W,X)$.
\end{theorem}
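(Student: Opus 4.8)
The plan is to pass from the statement about $\mathcal U(W,X)$ to a statement about the chamber $X$, and then to build a chamber of the correct dimension by a dimension‑shifting argument in the spirit of Wall's finiteness results.

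First I would record the combinatorial meaning of $n:=\vcd W$. If $\Gamma\le W$ is a torsion‑free finite‑index subgroup, then $\Sigma_L/\Gamma$ is a finite $K(\Gamma,1)$, so $\vcd W=\operatorname{cd}\Gamma=\max\{k:H^k_c(\Sigma_L;\mathbb Z)\neq 0\}$, and Davis's decomposition $H^k_c(\Sigma_L)\cong\bigoplus_{w\in W}\tilde H^{k-1}(L_{S\setminus\mathrm{In}(w)})$, where $\mathrm{In}(w)=\{s:\ell(ws)<\ell(w)\}$ (always spherical) and $L_{S\setminus\mathrm{In}(w)}$ is the full subcomplex of $L$ on $S\setminus\mathrm{In}(w)$, expresses $n$ purely in terms of the nerve. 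The theorem will follow once I build a \emph{finite} mirrored CW‑complex $X$ over $S$ such that: (a) the mirror structure is $W$‑finite, so that $W$ acts properly (finite stabilizers) and, $X$ being finite, cocompactly on $\mathcal U(W,X)$; (b) $\dim X=n$; and (c) $X$ and every $X^T$ with $T\subseteq S$ nonempty and spherical are acyclic. Indeed, by the standard criterion for acyclicity of basic constructions \cite[\S8.2]{Davis} (the circle of ideas underlying Propositions \ref{prop:basicconstrmnfld}--\ref{prop:basicconstrmnfldbdry}), conditions (a) and (c) force $\mathcal U(W,X)$ to be acyclic, while (b) gives it dimension $n$.

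To construct $X$: the Davis chamber $K$ already satisfies (a) and (c), but $\dim K=\dim L+1$ may exceed $n$. If it does, I would trade the top cells down one dimension at a time. The cellular chain complex of $\Sigma_{cc}$ is a resolution of $\mathbb Z$ by modules $\bigoplus_c\mathbb Z[W/W_{S(c)}]$ that are free relative to the finite special subgroups; contractibility of $\Sigma_L$ makes the $n$‑th syzygy $\Omega_n$ equal to the image of the $(n+1)$st boundary map, and the vanishing $H^k_c(\Sigma_L)=0$ for $k>n$ forces $\Omega_n$ to be \emph{projective} relative to this family of finite subgroups. Realizing $\Omega_n$ geometrically — attaching finitely many $W$‑orbits of $n$‑cells, with spherical mirror labels read off from the summands of $\Omega_n$, to the $n$‑skeleton of $K$ in place of its higher cells — yields a finite mirrored complex $X$ of dimension $n$ still satisfying (a) and (c); the low‑dimensional cases $n\le 2$ are checked directly to sidestep any Eilenberg--Ganea‑type issue.

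The main obstacle is precisely the claim that $\Omega_n$ is projective relative to the finite subgroups — equivalently, that the smallest dimension of an acyclic mirrored chamber for $W$ coincides with the cohomological bound $\vcd W$, with no gap. This is the heart of Bestvina's argument, and it is where the special combinatorial geometry of Coxeter groups enters (through the explicit form of $H^*_c(\Sigma_L)$ together with a folding/collapsing argument on $\Sigma_L$); the analogous equality fails for arbitrary groups with torsion. The remaining work — tracking the mirror structure through the cell trades so that (a) and (c) persist — is routine bookkeeping.
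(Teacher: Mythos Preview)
The paper does not prove this theorem: it is stated with a citation to Bestvina \cite{Bestvina} and then used as a black box (to deduce Corollary \ref{cor:vanishingabovevcd} and as an input to Lemma \ref{lemma:pushingupcycles}). There is therefore no proof in the paper to compare your proposal against.

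A brief comment on your sketch versus Bestvina's actual argument. Your reduction is correct: by Davis's acyclicity criterion it suffices to produce a finite $W$-finite mirrored complex $X$ of dimension $n=\vcd W$ with $(X,X^T)$ acyclic for every spherical $T$. However, the mechanism you propose for getting $\dim X$ down to $n$ --- a Wall-style cell trade showing the $n$-th syzygy of the chain complex of $\Sigma_{cc}$ is projective relative to the finite special subgroups --- is not Bestvina's approach, and your assertion that this ``is the heart of Bestvina's argument'' is not accurate. Bestvina's construction is inductive on $|S|$ and geometric: he builds the chamber for $(W,S)$ from the chambers already constructed for the maximal special subgroups $W_{S\setminus\{s\}}$ by a direct thickening-and-gluing procedure, and the dimension bound is read off from the combinatorial description of $H^\ast_c(\Sigma_L)$ that you quote, not from a relative-projectivity statement. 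Your parenthetical ``folding/collapsing argument on $\Sigma_L$'' gestures in the right direction, but the surrounding module-theoretic framework would need its own independent justification; as written it is a plan rather than a proof.
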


\begin{cor}
\label{cor:vanishingabovevcd}
Let $(W,S)$ be a Coxeter system. Then $$L_\Q^2b_k(W)=0 \text{ for } k>\vcd W.$$
\end{cor}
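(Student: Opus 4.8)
The plan is to invoke the machinery already in place: Bestvina's complex together with the dimension-shifting invariance of $L^2_\Q$-Betti numbers under change of acyclic model. First I would recall that $W$, being a Coxeter group on a finite generating set $S$, is finitely generated, so Theorem \ref{thm:bestvinacpx} provides a finite mirrored CW-complex $X$ such that $\mathcal{U}(W,X)$ is acyclic, admits a proper (and cocompact) $W$-action, and has dimension equal to $\vcd W$. The Davis complex $\Sigma_L$ is contractible (hence acyclic) and also carries a proper cocompact $W$-action, so the pair $\Sigma_L$, $\mathcal{U}(W,X)$ satisfies the hypotheses of Lemma \ref{lemma:l2homologyacycliccomplexes}. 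Consequently
$$L_\Q^2 b_k(W) = L_\Q^2 b_k(\Sigma_L) = L_\Q^2 b_k(\mathcal{U}(W,X)) \quad \text{for all } k \geq 0.$$

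Next I would pass to the algebraic description of $L^2_\Q$-Betti numbers from the subsection ``An alternate definition of $L_\Q^2$-Betti numbers,'' namely $L_\Q^2 b_k(\mathcal{U}(W,X)) = \dim_{\mathcal{N}_\Q} H_k^W(\mathcal{U}(W,X),\mathcal{N}_\Q(W))$, where $H_\ast^W$ is the homology of $C_\ast^W(\mathcal{U}(W,X),\mathcal{N}_\Q(W)) = \mathcal{N}_\Q(W)\otimes_{\mathbb{R}_\Q(W)} C_\ast(\mathcal{U}(W,X))$. Since $\mathcal{U}(W,X)$ is a CW-complex of dimension $\vcd W$, its cellular chain complex $C_\ast(\mathcal{U}(W,X))$ vanishes in degrees above $\vcd W$; tensoring over $\mathbb{R}_\Q(W)$ preserves this, so $C_k^W(\mathcal{U}(W,X),\mathcal{N}_\Q(W)) = 0$ for $k > \vcd W$. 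Therefore $H_k^W(\mathcal{U}(W,X),\mathcal{N}_\Q(W)) = 0$ for $k > \vcd W$, and taking $\dim_{\mathcal{N}_\Q}$ gives $L_\Q^2 b_k(\mathcal{U}(W,X)) = 0$ for $k > \vcd W$. Combining with the displayed equality finishes the argument.

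There is no genuine obstacle here; the only points requiring a moment of care are bookkeeping ones: confirming that $\Sigma_L$ legitimately qualifies as an acyclic model with proper $W$-action so that Lemma \ref{lemma:l2homologyacycliccomplexes} applies, and confirming that the Bestvina model $\mathcal{U}(W,X)$ — built from a \emph{finite} chamber $X$ with $W$ acting cocompactly — has a cellular chain complex that is genuinely concentrated in degrees $\leq \vcd W$ (this is exactly the content of ``$\vcd W$-dimensional'' in Theorem \ref{thm:bestvinacpx}). Both are immediate from the statements already available in the excerpt, so the corollary follows formally.
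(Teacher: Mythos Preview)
Your proof is correct and follows essentially the same approach as the paper: invoke Bestvina's $\vcd W$-dimensional acyclic complex from Theorem \ref{thm:bestvinacpx} and apply Lemma \ref{lemma:l2homologyacycliccomplexes} to transfer the computation there, where vanishing above degree $\vcd W$ is automatic. You have simply spelled out in more detail why a complex of dimension $\vcd W$ has vanishing $L^2_\Q$-Betti numbers above that degree.
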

\begin{proof}
We can use the acyclic $\vcd W$-dimensional complex of Theorem \ref{thm:bestvinacpx} to compute the weighted $L^2$-Betti numbers of $W$. Lemma \ref{lemma:l2homologyacycliccomplexes} now completes the proof.
\end{proof}

We now prove a lemma which is crucial for later computations.

\begin{lemma}
\label{lemma:pushingupcycles}
Let $n=\vcd W$ and suppose and that $L_\mathbf{1}^2b_n(W)=0$. Then $$L_\Q^2b_k(W)=0 \text{ for } k\geq n \text{ and } \Q\leq\mathbf{1}.$$
\end{lemma}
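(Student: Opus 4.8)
The plan is to use the alternate, homological-algebra definition of $L_\Q^2$-Betti numbers via $\dim_{\mathcal{N}_\Q}$ of $\mathcal{N}_\Q(W)$-modules, since this avoids closures of images and behaves well under the long exact sequences and universal-coefficient-style arguments we will need. By Theorem \ref{thm:bestvinacpx} and Lemma \ref{lemma:l2homologyacycliccomplexes}, fix an $n$-dimensional acyclic complex $\mathcal{U}=\mathcal{U}(W,X)$ of the form guaranteed by Bestvina, with $n=\vcd W$; its cellular chain complex $C_\ast(\mathcal{U})$ is a length-$n$ complex of finitely generated free $\mathbb{R}_\Q(W)$-modules (the $\mathbb{R}_\Q(W)$-module structure coming from the proper cocompact action), and it is acyclic, hence a resolution. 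The point of the hypothesis $L_\mathbf{1}^2 b_n(W)=0$ is that it controls the top homology of the $\mathcal{N}_\Q(W)$-complex $C_\ast^W(\mathcal{U},\mathcal{N}_\Q(W))=\mathcal{N}_\Q(W)\otimes_{\mathbb{R}_\Q(W)}C_\ast(\mathcal{U})$ for \emph{all} $\Q\le\mathbf{1}$, not just at $\Q=\mathbf{1}$, and then by Corollary \ref{cor:vanishingabovevcd} there is nothing above degree $n$, so only degree $n$ itself needs to be killed.

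First I would reduce the statement to the single claim that $H_n^W(\mathcal{U},\mathcal{N}_\Q(W))=0$ for $\Q\le\mathbf{1}$: vanishing of $L_\Q^2 b_k(W)$ for $k>n$ is immediate from Corollary \ref{cor:vanishingabovevcd}, and $k=n$ is exactly this claim. Now $H_n^W(\mathcal{U},\mathcal{N}_\Q(W))=\ker(\partial_n\otimes 1)$ sits as a submodule of the free module $\mathcal{N}_\Q(W)\otimes_{\mathbb{R}_\Q(W)}C_n(\mathcal{U})$, and since $\mathcal{U}$ is acyclic of dimension $n$, $Z_n:=\ker(\partial_n\colon C_n(\mathcal{U})\to C_{n-1}(\mathcal{U}))$ is the $n$-cycles and the tail $0\to Z_n\to C_n\to\cdots\to C_0\to\mathbb{Z}\to 0$ is exact; in particular $Z_n$ is a (finitely generated) module over $\mathbb{R}_\Q(W)$ with a finite free resolution by the $C_i$, $i>n$ being zero. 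The key input is a monotonicity/comparison principle for the dimension function in the multiparameter: I want to invoke the result (from \cite{DDJO}, and as used elsewhere in the weighted $L^2$ literature) that $\dim_{\mathcal{N}_\Q}$ of the homology is a ``continuous'' and in the relevant direction monotone function of $\Q$, so that the vanishing of the top $L^2$-Betti number at $\Q=\mathbf{1}$ forces it to vanish on the order ideal $\Q\le\mathbf{1}$. Concretely, I would argue that $\dim_{\mathcal{N}_\Q}\big(\mathcal{N}_\Q(W)\otimes_{\mathbb{R}_\Q(W)}Z_n\big)$ is independent of $\Q$ (it equals the alternating sum of the $\mathcal{N}_\Q$-dimensions of the free modules $C_i$, $i\ge n$, which are just the numbers of $W$-orbits of $i$-cells, weighted in a way that in fact cancels — more precisely the relevant invariant is an Euler-characteristic-type quantity that is locally constant in $\Q$), while $\dim_{\mathcal{N}_\Q}H_n^W=\dim_{\mathcal{N}_\Q}\big(\mathcal{N}_\Q(W)\otimes_{\mathbb{R}_\Q(W)}Z_n\big)-\dim_{\mathcal{N}_\Q}\overline{\operatorname{im}(\partial_{n+1}\otimes1)}$ and the subtracted term only grows as $\Q$ decreases (weights on the relevant cells increase the projections). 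Hence if the difference is $0$ at $\mathbf{1}$ it is $0$ for all $\Q\le\mathbf{1}$.

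The main obstacle I expect is making the monotonicity step rigorous: the dimension function $\Q\mapsto L_\Q^2 b_k$ is not in general monotone in $k$ for fixed $\Q$ or obviously monotone in $\Q$ for fixed $k$, so I cannot just quote a soft statement. The honest route is to use the \emph{decomposition} of the chain complex: because $\mathcal{U}$ is acyclic, $L_\Q^2 b_n(W)$ is governed entirely by how the image of $\partial_{n+1}^\Q$ (equivalently the cokernel structure, since $C_{>n}=0$ means $\partial_{n+1}=0$ — wait, here one must be careful: if the Bestvina complex truly has $C_i=0$ for $i>n$, then $\partial_{n+1}=0$ and $H_n^W=\mathcal{N}_\Q\otimes Z_n$ with no quotient, so $L_\Q^2 b_n(W)=\dim_{\mathcal{N}_\Q}(\mathcal{N}_\Q\otimes_{\mathbb{R}_\Q} Z_n)$, which by flatness-type properties of $\mathcal{N}_\Q$ over $\mathbb{R}_\Q(W)$ and the exactness of the resolution equals $\sum_{i\ge n}(-1)^{i-n}\dim_{\mathcal{N}_\Q}(\mathcal{N}_\Q\otimes C_i)$ minus a correction). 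So in fact the cleanest argument is: $L_\Q^2 b_n(W)=\dim_{\mathcal{N}_\Q}H_n^W(\mathcal{U},\mathcal{N}_\Q(W))$, and since $C_\ast(\mathcal{U})$ is a finite free resolution of $\mathbb{Z}$ over $\mathbb{R}_\Q(W)$ of length exactly $n$, one has $H_n^W=\operatorname{Tor}_n^{\mathbb{R}_\Q(W)}(\mathbb{Z},\mathcal{N}_\Q(W))$ — no, rather $H_n^W=\ker(\partial_n\otimes 1)$, and I would show this is a projective $\mathcal{N}_\Q$-module whose dimension is a continuous function of $\Q$ valued in a discrete set... This is genuinely delicate, and I anticipate that the paper's actual proof sidesteps it by a direct cycle-pushing argument: given a harmonic $n$-cycle for general $\Q\le\mathbf{1}$, rescale it coordinatewise to produce a cycle for $\Q=\mathbf{1}$ in the same ``shape,'' concluding from $L_\mathbf{1}^2 b_n(W)=0$ that it must be zero. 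I would pursue that concrete rescaling argument as the primary line, with the $\operatorname{Tor}$/monotonicity discussion as motivation, and flag the comparison of norms under the weight rescaling as the technical heart.
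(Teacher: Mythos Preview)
Your proposal spends most of its effort on a monotonicity/dimension-counting argument that, as you yourself acknowledge, never quite closes. The difficulty is real: there is no general monotonicity of $\Q\mapsto L_\Q^2 b_k$ to invoke, and the Euler-characteristic bookkeeping you sketch (alternating sums of dimensions of free modules) does not isolate $\dim_{\mathcal{N}_\Q} H_n$ without further input, precisely because the relevant quantities \emph{do} depend on $\Q$ through the weights of the stabilizers. So that line, as written, has a genuine gap.

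The paper's proof is exactly the ``cycle-pushing'' rescaling you anticipate in your final paragraph, and it is short. Work on the Bestvina complex $B_W$ (dimension $n$). Given $\Q\le\mathbf{1}$ and a nonzero $\psi\in L_\Q^2 H_n(B_W)$, apply the coordinatewise rescaling $m_\Q(f)(\sigma)=\mu_\Q(\sigma)f(\sigma)$, which is an isometry $L_\Q^2 C_n\to L_{\Q^{-1}}^2 C_n$. Since $\Q^{-1}\ge\mathbf{1}$, one has $\|m_\Q\psi\|_{\mathbf 1}\le\|m_\Q\psi\|_{\Q^{-1}}<\infty$, so $m_\Q\psi\in L_\mathbf{1}^2 C_n(B_W)$. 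A direct computation gives $\partial = m_\Q\,\partial^\Q\, m_\Q^{-1}$, so $m_\Q\psi$ is a $\partial$-cycle; and because $B_W$ has no $(n+1)$-cells, $m_\Q\psi$ is automatically a cocycle, hence harmonic and nonzero in $L_\mathbf{1}^2 H_n(B_W)$, contradicting the hypothesis. The two points you flagged as the ``technical heart'' --- the norm comparison and the top-dimensionality making the cocycle condition free --- are precisely the whole proof; everything else in your proposal can be discarded.
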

\begin{proof}
By Corollary \ref{cor:vanishingabovevcd}, we obtain vanishing for $k>n$. Now, suppose for a contradiction that $L_\Q^2b_n(W)\neq 0$ for $\Q<\mathbf{1}$. Let $B_W$ denote the complex of Theorem \ref{thm:bestvinacpx}. Lemma \ref{lemma:l2homologyacycliccomplexes} says that we can compute weighted $L^2$-Betti numbers of $W$ with respect to the complex $B_W$. In particular, $L_\Q^2b_n(W)=L_\Q^2b_n(B_W)$ and we can choose a nontrivial element $\psi\in L_\Q^2H_n(B_W)$. Thus $\psi$ is a cycle under the weighted boundary map $\partial^\Q$. Consider the isomorphism of Hilbert spaces $$m_\Q: L_\Q^2C_n(B_W)\rightarrow L_{\Q^{-1}}^2C_n(B_W)$$ defined by $m_\Q(f(\sigma))=\mu_\Q(\sigma)f(\sigma)$. In particular, $m_\Q\psi\in L_{\Q^{-1}}^2C_n(B_W)$ and since $\Q^{-1}>\mathbf{1}$, $$||m_\Q\psi||_{\mathbf{1}}\leq ||m_\Q\psi||_{\Q^{-1}}<\infty.$$
 Hence $m_\Q\psi\in L_\mathbf{1}^2C_n(B_W)$.
 \begin{figure}[H]
\[\begin{tikzpicture}
\tikzstyle{vertex}=[circle, draw, inner sep=0pt, minimum size=0pt]
\tikzstyle{every node}=[circle,inner sep=0pt,minimum size=0pt]

\vertex (x1) at (0,0) {};
\vertex (x2) at (10,0) {};

\path[|->]
    (x1) edge node[pos=0,below=6pt] (0) {$\mathbf{0}$} node[pos=.2,below=5pt] (q1) {$\mathbf{q}^{\text{ }}$} node[pos=.5,below=6pt] (1) {$\mathbf{1}$} node[pos=.8,below=5pt] (q2) {$\mathbf{q}^{-1}$} (x2)
    ;

\path[->]
    (q1) edge [out=-50,in=210] node[pos=.5,above=2pt] (map) {$m_\mathbf{q}$} (q2)
;

\draw (q1) edge node[pos=1,above=5.5pt] (psi) {$\psi$} ++(0,0.45);
\draw (q2) edge node[pos=1,above=0.5pt] (mpsi) {$m_{\mathbf{q}}\psi$} ++(0,0.6);
\draw (1) edge node[pos=1,above=0.5pt] (mpsi1) {$m_{\mathbf{q}}\psi$} ++(0,0.45);

\path[->]
    (mpsi) edge (mpsi1);
;
\end{tikzpicture}\]
\caption{:\hskip2mm Schematic for the proof of Lemma \ref{lemma:pushingupcycles}}
\end{figure}
 Now, a simple computation shows that $\partial=m_\Q\partial^\Q m_\Q^{-1}$ and since $\psi$ is a cycle under $\partial^\Q$, $m_\Q\psi$ is a cycle under $\partial$, the standard $L^2$-boundary operator. Moreover, since $B_W$ is $n$-dimensional, $m_\Q\psi$ is trivially a cocycle. Thus we have produced a nontrivial element of $L_{\mathbf{1}}^2H_n(B_W)$, a contradiction.
\end{proof}

\subsection{Algebraic topology of $\fdavis_L$ and $\partial\fdavis_L$}
We now turn our attention to studying the algebraic topology of $\fdavis_L$ and $\partial\fdavis_L$. We first begin with a corollary of Proposition \ref{prop:defretract}.

\begin{cor}
\label{cor:Hsigma=Hfdavis}
$$L_\Q^2H_\ast(\Phi_L)\cong L_\Q^2H_\ast(\Sigma_L).$$
\end{cor}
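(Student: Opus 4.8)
The plan is to leverage the $W$-equivariant deformation retraction of Proposition \ref{prop:defretract} at the level of (weighted) chain complexes. First note that $\fchamber$ is compact, being closed in the compact complex $P$, so $\fdavis_L = \mathcal{U}(W,\fchamber)$ is of the form for which weighted $L^2$-(co)homology is defined. After passing to a common subdivision in which $K$ is a subcomplex of $\fchamber$ and the face-preserving retraction $\fchamber \to K$ is cellular, the induced deformation retraction $r \colon \fdavis_L \to \Sigma_L$ together with the inclusion $\iota \colon \Sigma_L \hookrightarrow \fdavis_L$ gives mutually (chain-homotopy-)inverse $W$-equivariant cellular chain maps between $C_\ast(\Sigma_L)$ and $C_\ast(\fdavis_L)$, with $W$-equivariant chain homotopies. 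Equivariance of everything here is automatic: the retraction of $\fchamber$ onto $K$ is face preserving, so it carries $\fchamber_s$ into $K_s$ for each $s$, hence descends through the basic construction $\mathcal{U}(W,-)$.

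Next I would promote this to a statement about the Hecke-algebra chain complexes. The $\mathbb{R}_\Q (W)$-module structure on the cellular chains of a complex of the form $\mathcal{U}(W,X)$ is functorial for mirror-structure-preserving cellular maps, because both the action and the measure $\mu_\Q$ on cells are determined by the combinatorial data of $W$-orbits and the mirror stabilizers $S(\sigma)=\{s\mid\sigma\subseteq X_s\}$, and this data is preserved by $r$ and $\iota$. Thus the chain homotopy equivalence above refines to a chain homotopy equivalence of $\mathbb{R}_\Q (W)$-chain complexes $C_\ast(\Sigma_L)\simeq C_\ast(\fdavis_L)$. Applying the functor $\mathcal{N}_\Q(W)\otimes_{\mathbb{R}_\Q (W)}-$ yields a chain homotopy equivalence of $\mathcal{N}_\Q(W)$-chain complexes, hence an isomorphism $H_\ast^W(\fdavis_L,\mathcal{N}_\Q(W))\cong H_\ast^W(\Sigma_L,\mathcal{N}_\Q(W))$. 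Passing through the equivalence between Hilbert $\mathcal{N}_\Q$-modules and projective $\mathcal{N}_\Q$-modules recalled in the preceding subsection (equivalently, invoking the $W$-equivariant homotopy axiom for reduced weighted $L^2$-(co)homology) upgrades this to the asserted isomorphism $L_\Q^2H_\ast(\fdavis_L)\cong L_\Q^2H_\ast(\Sigma_L)$ of Hilbert $\mathcal{N}_\Q$-modules; in particular $L_\Q^2b_\ast(\fdavis_L)=L_\Q^2b_\ast(\Sigma_L)$.

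The only delicate point is the claim that the deformation retraction can be realized cellularly and compatibly with the $\mathbb{R}_\Q (W)$-structure, and I expect this (routine) bookkeeping to be the main obstacle: one must choose subdivisions so that the face-preserving retraction promised before Proposition \ref{prop:defretract} is simplicial, and then verify that the resulting chain maps intertwine not merely the $\mathbb{Z}[W]$-actions but the full Hecke-algebra actions and the weighting $\mu_\Q$. Alternatively, one can avoid subdivision entirely by working with the singular weighted $L^2$-chain complex, for which $W$-equivariant homotopy invariance is immediate, and then comparing with the cellular theory via the Eilenberg--Steenrod axioms recorded above.
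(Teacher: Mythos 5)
Your proposal is correct and follows exactly the route the paper leaves implicit: the paper states the corollary with no written proof as an immediate consequence of the $W$-equivariant deformation retraction (Proposition \ref{prop:defretract}) together with the homotopy invariance of weighted $L^2$-(co)homology (part of the ``versions of Eilenberg--Steenrod axioms'' noted earlier). Your more detailed unpacking via $\mathbb{R}_\Q(W)$-chain homotopy equivalence and the functor $\mathcal{N}_\Q(W)\otimes_{\mathbb{R}_\Q(W)}-$ is a faithful expansion of that one-line argument rather than a different approach.
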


Not only does $\fdavis_L$ have the same weighted $L^2$-(co)homology as $\Sigma_L$, but by Proposition \ref{prop:fdavismnfld}, $\fdavis_L$ is a locally compact homology manifold with boundary. Thus we have weighted Poincar\'{e} duality for $\fdavis_L$ at our disposal. With this in mind, we prove the following lemma.

\begin{lemma}
\label{lemma:h1bdry}
Suppose that $(W,S)$ is a Coxeter system with $\vcd W=m$ and that $\fdavis_L$ is a homology $n$-manifold with boundary with $L_\Q^2b_1(\partial\fdavis_L)=0$.
\begin{enumerate}[(i)]
\item If $n-m=1$ and $L_{\Q^{-1}}^2b_m(\fdavis_L)=0$ then $L_\Q^2b_1(\Sigma_L)=0$.
\item If $n-m\geq 2$ then $L_\Q^2b_1(\Sigma_L)=0$.
\end{enumerate}
\end{lemma}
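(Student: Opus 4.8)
The plan is to exploit weighted Poincar\'{e} duality for the homology manifold with boundary $\fdavis_L$ together with the long exact sequence of the pair $(\fdavis_L,\partial\fdavis_L)$ in weighted $L^2$-(co)homology. Since $\fdavis_L$ retracts onto $\Sigma_L$ (Corollary \ref{cor:Hsigma=Hfdavis}), it suffices to show $L_\Q^2 b_1(\fdavis_L)=0$. The starting observation is that the vanishing hypothesis for $\partial\fdavis_L$ in degree $1$, combined with the long exact sequence
$$\cdots\to L_\Q^2 H_1(\partial\fdavis_L)\to L_\Q^2 H_1(\fdavis_L)\to L_\Q^2 H_1(\fdavis_L,\partial\fdavis_L)\to\cdots,$$
reduces the problem to showing that the relative group $L_\Q^2 H_1(\fdavis_L,\partial\fdavis_L)$ vanishes. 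By weighted Poincar\'{e} duality (applied to the locally compact homology $n$-manifold with boundary $\fdavis_L$), we have $L_\Q^2 H_1(\fdavis_L,\partial\fdavis_L)\cong L_{\Q^{-1}}^2 H_{n-1}(\fdavis_L)\cong L_{\Q^{-1}}^2 H_{n-1}(\Sigma_L)\cong L_{\Q^{-1}}^2 b_{n-1}(W)$ in dimension.

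Now the two cases diverge according to the gap $n-m$. In case (ii), where $n-m\geq 2$, we have $n-1\geq m+1>\vcd W$, so Corollary \ref{cor:vanishingabovevcd} gives $L_{\Q^{-1}}^2 b_{n-1}(W)=0$ directly; hence the relative group vanishes and the long exact sequence forces $L_\Q^2 b_1(\Sigma_L)=0$. In case (i), where $n-m=1$, we get $n-1=m=\vcd W$, and the relative group is isomorphic to $L_{\Q^{-1}}^2 H_m(\fdavis_L)\cong L_{\Q^{-1}}^2 H_m(\Sigma_L)$, whose von Neumann dimension is $L_{\Q^{-1}}^2 b_m(W)$; this is exactly what the extra hypothesis $L_{\Q^{-1}}^2 b_m(\fdavis_L)=0$ kills. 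So again the relative group vanishes and the conclusion follows from the long exact sequence.

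I would present this uniformly: first quote weighted Poincar\'{e} duality to identify $L_\Q^2 H_1(\fdavis_L,\partial\fdavis_L)$ with $L_{\Q^{-1}}^2 H_{n-1}(\fdavis_L)$, then use Corollary \ref{cor:Hsigma=Hfdavis} to replace $\fdavis_L$ with $\Sigma_L$, then dispatch the two cases by either Corollary \ref{cor:vanishingabovevcd} (when $n-1>m$) or the hypothesis (when $n-1=m$), and finally feed this into the long exact sequence of the pair together with $L_\Q^2 b_1(\partial\fdavis_L)=0$ and the dimension-monotonicity/exactness of $\dim_{\mathcal{N}_\Q}$ on the sequence $H_1(\partial\fdavis_L)\to H_1(\fdavis_L)\to H_1(\fdavis_L,\partial\fdavis_L)$ to conclude $L_\Q^2 b_1(\fdavis_L)=0$, hence $L_\Q^2 b_1(\Sigma_L)=0$ by Corollary \ref{cor:Hsigma=Hfdavis}.

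The main subtlety to be careful about is bookkeeping with the parameter: weighted Poincar\'{e} duality swaps $\Q$ for $\Q^{-1}$, so the hypothesis in (i) must be stated for $\Q^{-1}$ (as it is), and one should make sure the exactness argument is run with the von Neumann dimension function, for which I would invoke the projective-module formulation of $L_\Q^2$-Betti numbers (from the ``alternate definition'' subsection) so that additivity of $\dim_{\mathcal{N}_\Q}$ along exact sequences applies without worrying about closures of images. I do not expect a genuine obstacle here — the work is entirely in assembling the exact sequence and the duality isomorphism correctly; the only place to take care is that the long exact sequence of the pair and Poincar\'{e} duality both hold for locally compact homology manifolds with boundary, which is precisely what Proposition \ref{prop:fdavismnfld} and the weighted Eilenberg--Steenrod axioms recalled earlier provide.
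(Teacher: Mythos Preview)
Your proposal is correct and follows essentially the same argument as the paper: long exact sequence of the pair $(\fdavis_L,\partial\fdavis_L)$, weighted Poincar\'{e} duality to identify $L_\Q^2 H_1(\fdavis_L,\partial\fdavis_L)\cong L_{\Q^{-1}}^2 H_{n-1}(\fdavis_L)$, then kill the right-hand side via the hypothesis in case (i) and via Corollary~\ref{cor:vanishingabovevcd} in case (ii), and conclude using Corollary~\ref{cor:Hsigma=Hfdavis}. Your extra care about invoking the projective-module formulation for additivity of $\dim_{\mathcal{N}_\Q}$ is fine but not strictly needed---the paper simply appeals to weak exactness of the reduced $L^2$ sequence.
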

\begin{proof}
Consider the long exact sequence for the pair $(\fdavis_L,\partial\fdavis_L)$:
$$\begin{tikzcd}[column sep=small]
\cdot\cdot\cdot \arrow{r} & L_\Q^2H_1(\partial\fdavis_L) \arrow{r} & L_\Q^2H_1(\fdavis_L) \arrow{r} & L_\Q^2H_1(\fdavis_L,\partial\fdavis_L) \arrow{r} & \cdot\cdot\cdot
\end{tikzcd}$$
By weighted Poincar\'{e} duality $$L_\Q^2H_1(\fdavis_L,\partial\fdavis_L)\cong L_{\Q^{-1}}^2H_{n-1}(\fdavis_L).$$

Now, by assumption $L_\Q^2H_1(\partial\fdavis_L)=0$, so by weak exactness we must show that $L_{\Q^{-1}}^2H_{n-1}(\fdavis_L)=0$. We will then be done by Corollary \ref{cor:Hsigma=Hfdavis}, which says that $L_\Q^2H_1(\Sigma_L)=L_\Q^2H_1(\fdavis_L)=0$.

For (i), we have that $L_{\Q^{-1}}^2b_m(\fdavis_L)=0$. Since $n-m=1$, we have that $m=n-1$, so it follows that $L_{\Q^{-1}}^2H_{n-1}(\fdavis_L)=0$.
For (ii), we have that $n-m\geq 2$, so $n-1\geq m+1$. Since $\vcd W=m$, Corollary \ref{cor:vanishingabovevcd} implies that $$L_{\Q^{-1}}^2H_{n-1}(\Sigma_L)=L_{\Q^{-1}}^2H_{n-1}(\fdavis_L)=0.$$
\end{proof}

We devote the remainder of the section to studying the algebraic topology of $\partial\fdavis_L$. The following is a corollary of Proposition \ref{prop:boundarycomponentsareproducts}.

\begin{cor}
\label{cor:l2boundarycomponents}
\begin{enumerate}[(i)]
\item If $T\in\mathcal{N}_P$, then for every $k\geq 0$ $$L_\Q^2b_k(\mathcal{U}(W,C_T))=L_\Q^2b_k(\fdavis_{L_T})=L_\Q^2b_k(\Sigma_{L_T}),$$ where $L_T$ is the subcomplex of $L$ corresponding to the subgroup $W_T$.
\item Suppose that $T_1,T_2\in\mathcal{N}_P$ with $T_1\subset T_2$. Then for every $k\geq 0$ $$L_\Q^2b_k(\mathcal{U}(W,C_{T_1})\cap\mathcal{U}(W,C_{T_2}))=L_\Q^2b_k(\fdavis_{L_{T_1}})=L_\Q^2b_k(\Sigma_{L_{T_1}}),$$ where $L_{T_1}$ is the subcomplex of $L$ corresponding to the subgroup $W_{T_1}$.
\end{enumerate}
\begin{remark}
The $L_\Q^2$-Betti numbers on the center and the right of the equations in $(i)$ and $(ii)$ are computed with respect to the special subgroups $W_T$ (respectively $W_{T_1}$) of $W$, while the ones on the far left side of the equations are computed with respect to $W$.
\end{remark}
\end{cor}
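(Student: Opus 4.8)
### Proof proposal for Corollary \ref{cor:l2boundarycomponents}

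The plan is to translate the product decompositions from Proposition \ref{prop:boundarycomponentsareproducts} into statements about weighted $L^2$-Betti numbers, using two facts: a Künneth-type principle for the basic construction applied to a product chamber, and the homotopy invariance already packaged in Corollary \ref{cor:Hsigma=Hfdavis} together with Lemma \ref{lemma:l2homologyacycliccomplexes}. For part (i), recall that Proposition \ref{prop:boundarycomponentsareproducts}(ii) gives $C_T \approx \fchamber(T)\times\Lambda_T$, where $\fchamber(T)$ is the fattened Davis chamber for the Coxeter system $(W_T,T)$ and $\Lambda_T$ carries only the trivial ($W$-)mirror structure along the $\Lambda_T$ factor (the mirrors of $C_T$ corresponding to $s\in T$ all have the form $\fchamber(T)_s\times\Lambda_T$). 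Consequently $\mathcal{U}(W,C_T)$ splits as $\mathcal{U}(W_T,\fchamber(T))\times\Lambda_T$ up to the relevant cellular equivalence: the $W$-action on the $C_T$-stratum factors through $W_T$ because no mirror indexed outside $T$ meets $C_T$, and on $\Lambda_T$ the action is trivial. Since $\Lambda_T$ is contractible (it is $P_T$ with neighborhoods of some of its faces removed, and $P_T$ is a cone), the cellular chain complex of $\mathcal{U}(W,C_T)$ with its $\mathbb{R}_\Q(W_T)$-structure is chain homotopy equivalent to that of $\mathcal{U}(W_T,\fchamber(T))$. Taking $\mathcal{N}_\Q(W_T)\otimes_{\mathbb{R}_\Q(W_T)}(-)$ and homology, and using that $\mathcal{N}_\Q(W_T)$ embeds compatibly so the von Neumann dimension is unchanged, we get $L_\Q^2 b_k(\mathcal{U}(W,C_T)) = L_\Q^2 b_k(\mathcal{U}(W_T,\fchamber(T)))$. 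But $\mathcal{U}(W_T,\fchamber(T)) = \fdavis_{L_T}$ by construction, and $\fdavis_{L_T}$ $W_T$-equivariantly deformation retracts onto $\Sigma_{L_T}$ by Proposition \ref{prop:defretract} (or directly by Corollary \ref{cor:Hsigma=Hfdavis}), giving the last equality $L_\Q^2 b_k(\fdavis_{L_T}) = L_\Q^2 b_k(\Sigma_{L_T})$.

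For part (ii), I would run the identical argument using Proposition \ref{prop:boundarycomponentsareproducts}(iii), which gives $C_{T_1}\cap C_{T_2} \approx \fchamber(T_1)\times\Lambda_{T_2}$. Here the key point is that $\Lambda_{T_2}$ is still contractible and the $W$-action on $\mathcal{U}(W, C_{T_1}\cap C_{T_2})$ again factors through $W_{T_1}$ (since $T_1 \subset T_2$ and the $\fchamber(T_1)$ factor controls which mirrors are hit — the mirrors meeting this stratum are exactly those indexed by $s \in T_1$). So $\mathcal{U}(W,C_{T_1})\cap\mathcal{U}(W,C_{T_2}) \simeq \mathcal{U}(W_{T_1},\fchamber(T_1))\times\Lambda_{T_2}$, and collapsing the contractible factor yields $L_\Q^2 b_k(\mathcal{U}(W,C_{T_1})\cap\mathcal{U}(W,C_{T_2})) = L_\Q^2 b_k(\fdavis_{L_{T_1}}) = L_\Q^2 b_k(\Sigma_{L_{T_1}})$ exactly as before. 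Note that $\mathcal{U}(W,C_{T_1})\cap\mathcal{U}(W,C_{T_2}) = \mathcal{U}(W, C_{T_1}\cap C_{T_2})$ since the basic construction commutes with these mirrored intersections.

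The main obstacle is making precise the claim that for a product chamber $X = Y \times Z$ with $Z$ contractible and carrying trivial mirror structure (all mirrors of the form $Y_s \times Z$), one has $L_\Q^2 b_k(\mathcal{U}(W,X)) = L_\Q^2 b_k(\mathcal{U}(W_{T},Y))$ where $T$ is the set of indices actually appearing. Two things need care: first, that the $W$-action on $\mathcal{U}(W,X)$ genuinely reduces to a $W_T$-action after discarding the (free, but here we only need that it contributes nothing new) translates — more precisely, that the $\mathbb{R}_\Q(W)$-chain complex of $\mathcal{U}(W,X)$ is, as a complex of $\mathcal{N}_\Q(W_T)$-modules after the induction, chain homotopy equivalent to that of $\mathcal{U}(W_T,Y)$, using that $Z$ being contractible lets us replace $C_\ast(Y\times Z)$ by $C_\ast(Y)$ $W_T$-equivariantly via a chain homotopy (which exists because $Z$ is a contractible CW-complex with trivial action, so $C_\ast(Z) \simeq \mathbb{R}$); second, the compatibility of von Neumann dimensions under the inclusion $\mathcal{N}_\Q(W_T) \hookrightarrow \mathcal{N}_\Q(W)$, which is standard (induction of a Hilbert $\mathcal{N}_\Q(W_T)$-module to $\mathcal{N}_\Q(W)$ preserves von Neumann dimension, since $L^2_\Q(W) = \bigoplus_{w} L^2_\Q(W_T)$ as a sum over coset representatives with the weights $q_w$ normalizing correctly). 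I would spell these two points out carefully and invoke Lemma \ref{lemma:l2homologyacycliccomplexes} and Corollary \ref{cor:Hsigma=Hfdavis} for the remaining equalities; everything else is bookkeeping with the decompositions already established.
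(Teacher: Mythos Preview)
Your approach is essentially the same as the paper's: use the product decomposition from Proposition~\ref{prop:boundarycomponentsareproducts}, collapse the contractible $\Lambda_T$ factor, and invoke that induction from $\mathcal{N}_\Q(W_T)$ to $\mathcal{N}_\Q(W)$ preserves von Neumann dimension. Your ``obstacle'' paragraph identifies exactly the two points the paper uses.

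However, your exposition in the first paragraph contains a genuine misstatement that you should fix. You write that ``$\mathcal{U}(W,C_T)$ splits as $\mathcal{U}(W_T,\fchamber(T))\times\Lambda_T$'' and that ``the $W$-action on the $C_T$-stratum factors through $W_T$''. Neither is literally true: the space $\mathcal{U}(W,C_T)$ is built from \emph{all} $W$-translates of $C_T$, not just $W_T$-translates, so it is the induced space $W\times_{W_T}\bigl(\mathcal{U}(W_T,\fchamber(T))\times\Lambda_T\bigr)$, and $W$ acts genuinely (not through the quotient $W_T$). Likewise, the sentence ``the cellular chain complex of $\mathcal{U}(W,C_T)$ with its $\mathbb{R}_\Q(W_T)$-structure is chain homotopy equivalent to that of $\mathcal{U}(W_T,\fchamber(T))$'' is false as stated: restricted to $\mathbb{R}_\Q(W_T)$, the former is an infinite direct sum of copies of the latter, one for each coset. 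The paper avoids this tangle by arguing in two clean steps: first, since $\Lambda_T$ is contractible with no mirror structure, $\mathcal{U}(W,C_T)$ is $W$-equivariantly homotopy equivalent to $\mathcal{U}(W,\fchamber(T))$ (same group $W$ throughout); second, $L^2_\Q H_\ast(\mathcal{U}(W,\fchamber(T)))$ is the completion of $L^2_\Q(W)\otimes_{\mathbb{R}_\Q(W_T)} L^2_\Q H_\ast(\mathcal{U}(W_T,\fchamber(T)))$, and induction preserves dimension. Your final paragraph effectively arrives at this, so the argument is salvageable --- just reorder so that you collapse $\Lambda_T$ \emph{before} changing groups, and keep the $W$ versus $W_T$ bookkeeping straight.
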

\begin{proof}
We prove only (i) as the proof of (ii) is similar. Proposition \ref{prop:boundarycomponentsareproducts} implies that $C_T\approx \fchamber(T)\times\Lambda_T$ as mirrored spaces, where $\Lambda_T$ is contractible and has no mirror structure. Therefore $\mathcal{U}(W,C_T)$ is $W$-equivariantly homotopy equivalent to $\mathcal{U}(W,\fchamber(T))$. Now, $L_\Q^2H_\ast\left(\mathcal{U}(W,\fchamber(T))\right)$ is just the completion of $$L_\Q^2(W)\otimes_{\mathbb{R}_\Q(W_T)} L_\Q^2H_\ast\left(\mathcal{U}(W_T,\fchamber(T))\right),$$ so for every $k\geq 0$, $$L_\Q^2b_k\left(\mathcal{U}(W,\fchamber(T))\right)=L_\Q^2b_k\left(\mathcal{U}(W_T,\fchamber(T))\right)=L_\Q^2b_k(\fdavis_{L_T}).$$
\end{proof}

Consider the cover $\mathcal{V}=\{\mathcal{U}(W,C_T)\}_{T\in\mathcal{N}_P}$ of $\partial\fdavis_L$ in Proposition \ref{prop:boundaryfdavis}. The cover $\mathcal{V}$ will have intersections of variable depth, so we obtain a spectral sequence following \cite[Ch. VII, \S 3,4]{Brown}:

\begin{prop}
\label{prop:specialspectralsequence}
There is a Mayer--Vietoris type spectral sequence converging to $H_\ast^W(\partial\fdavis_L,\mathcal{N}_\Q(W))$ with $E_1$-term: $$E_1^{i,j}=\bigoplus_{\substack{\sigma\in \textnormal{Flag}(\mathcal{N}_P) \\ \dim\sigma=i}}H_j^W(\mathcal{U}(W,C_{\min\sigma}),\mathcal{N}_\Q(W)).$$
\end{prop}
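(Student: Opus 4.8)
The plan is to build this spectral sequence by the standard machinery of a cover with intersections of variable depth, exactly as in \cite[Ch.~VII, \S3, 4]{Brown}, but carried out at the level of $\mathcal{N}_\Q(W)$-chain complexes so that the von Neumann dimension behaves well (this is the reason we work with $H_\ast^W(-,\mathcal{N}_\Q(W))$ rather than with reduced $L^2$-cohomology, as noted in the subsection on the alternate definition of $L_\Q^2$-Betti numbers). First I would fix the cover $\mathcal{V}=\{\mathcal{U}(W,C_T)\}_{T\in\mathcal{N}_P}$ of $\partial\fdavis_L$ given by Proposition~\ref{prop:boundaryfdavis}, and record the key combinatorial input: by Proposition~\ref{prop:boundarycomponentsareproducts}(i), a finite subfamily $\{\mathcal{U}(W,C_{T_0})\}$, $\dots$, $\{\mathcal{U}(W,C_{T_p})\}$ has nonempty common intersection if and only if $\{T_0,\dots,T_p\}$ is totally ordered by inclusion; in other words, the nerve of $\mathcal{V}$ is the order complex $\textnormal{Flag}(\mathcal{N}_P)$, whose $i$-simplices $\sigma$ are chains of length $i+1$ in $\mathcal{N}_P$. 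Moreover, for such a chain $\sigma$ with minimal element $\min\sigma$, iterating Proposition~\ref{prop:boundarycomponentsareproducts}(iii) gives $\bigcap_{T\in\sigma}C_T\approx \fchamber(\min\sigma)\times\Lambda_{T_{\max}}$, which is $W$-equivariantly homotopy equivalent (rel mirror structure) to $\fchamber(\min\sigma)$, so the intersection indexed by $\sigma$ contributes the chain complex of $\mathcal{U}(W,C_{\min\sigma})$ up to $W$-chain homotopy equivalence.

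Next I would assemble the Mayer--Vietoris double complex associated to this cover. Concretely, set
$$
D_{i,j}=\bigoplus_{\substack{\sigma\in\textnormal{Flag}(\mathcal{N}_P)\\ \dim\sigma=i}} C_j^W\!\bigl(\mathcal{U}(W,C_{\min\sigma}),\mathcal{N}_\Q(W)\bigr),
$$
with vertical differential the (tensored) cellular boundary and horizontal differential the simplicial/\v{C}ech coboundary of the nerve, built from the $W$-chain maps induced by the inclusions $\bigcap_{T\in\sigma}C_T\hookrightarrow \bigcap_{T\in\sigma'}C_T$ whenever $\sigma'\subset\sigma$ is a face (here one uses that such an inclusion, on the product decomposition, is up to homotopy the identity on the $\fchamber(\min\sigma')$-factor crossed with an inclusion of contractible $\Lambda$-factors, so it induces an $\mathcal{N}_\Q(W)$-chain homotopy equivalence onto its image — this is precisely Corollary~\ref{cor:l2boundarycomponents}). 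Standard homological algebra (the acyclic-cover / Mayer--Vietoris argument of \cite[Ch.~VII]{Brown}, applied verbatim to this cover with the $W$-action kept track of throughout) then shows the total complex of $D_{\ast,\ast}$ computes $H_\ast^W(\partial\fdavis_L,\mathcal{N}_\Q(W))$. Filtering by the horizontal (nerve) degree yields a spectral sequence with
$$
E_1^{i,j}=\bigoplus_{\substack{\sigma\in\textnormal{Flag}(\mathcal{N}_P)\\ \dim\sigma=i}} H_j^W\!\bigl(\mathcal{U}(W,C_{\min\sigma}),\mathcal{N}_\Q(W)\bigr),
$$
which is exactly the asserted $E_1$-term, and it converges to $H_\ast^W(\partial\fdavis_L,\mathcal{N}_\Q(W))$.

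The step I expect to be the main obstacle is the careful verification that the cover and its multiple intersections are "acyclic enough" in the equivariant sense for the Mayer--Vietoris spectral sequence to apply. There are two subtleties. First, the cover is by closed pieces $\mathcal{U}(W,C_T)$ rather than open sets, so one must either thicken slightly to honestly open sets (using the collar/product structure from the regular-neighborhood construction in Section~3, which does not change the homology) or invoke the closed-cover version of the argument; either way one must check the pieces form a genuine covering of $\partial\fdavis_L$ with the expected pattern of intersections, and here Proposition~\ref{prop:boundaryfdavis} together with part~(i) of Proposition~\ref{prop:boundarycomponentsareproducts} does the bookkeeping. Second, and more delicately, one must ensure that all the identifications $\bigcap_{T\in\sigma}C_T\simeq \fchamber(\min\sigma)$ can be made compatibly as $\sigma$ ranges over $\textnormal{Flag}(\mathcal{N}_P)$, i.e.\ that the various homotopy equivalences from Proposition~\ref{prop:boundarycomponentsareproducts}(iii) fit together into a map of the full double complex (not merely pairwise) — this is where one wants to replace each intersection complex by the chain complex of $\mathcal{U}(W,C_{\min\sigma})$ up to coherent $W$-chain homotopy, and the cleanest way to handle it is to note that $\Lambda$-factors are all contractible with trivial mirror structure, so the projections onto the $\fchamber$-factors are strict $W$-maps that are mutually compatible, making the coherence automatic. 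Once these points are in place, the identification of $E_1$ and the convergence are formal.
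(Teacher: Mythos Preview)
Your proposal is correct and follows essentially the same route as the paper: both invoke the Mayer--Vietoris spectral sequence of \cite[Ch.~VII, \S3--4]{Brown} for the cover $\mathcal{V}=\{\mathcal{U}(W,C_T)\}_{T\in\mathcal{N}_P}$, identify the nerve of $\mathcal{V}$ with $\textnormal{Flag}(\mathcal{N}_P)$ via Proposition~\ref{prop:boundarycomponentsareproducts}(i), and then use Proposition~\ref{prop:boundarycomponentsareproducts}(iii) iterated to identify each intersection $V_\sigma$ with $\mathcal{U}(W,C_{\min\sigma})$ up to $W$-equivariant homotopy equivalence. The paper's proof is terser and simply asserts $V_\sigma\approx\mathcal{U}(W,C_{\min\sigma})$ without dwelling on the closed-cover and coherence issues you flag, but your more explicit treatment of those points is sound and does not depart from the underlying strategy.
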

\begin{proof}
Let $N(\mathcal{V})$ denote the nerve of the cover $\mathcal{V}$. It is the abstract simplicial complex whose vertex set is $\mathcal{N}_P$ and whose simplices are the non-empty subsets $\sigma\subset \mathcal{N}_P$ such that the intersection $V_\sigma=\bigcap_{T\in\sigma}\mathcal{U}(W,C_T)$ is non-empty. Following \cite[Ch. VII, \S 3,4]{Brown}, there is a Mayer--Vietoris type spectral sequence converging to $H_\ast^W(\partial\fdavis_L,\mathcal{N}_\Q(W))$ with $E_1$-term:

$$E_1^{i,j}=\bigoplus_{\substack{\sigma\in N(\mathcal{V}) \\ \dim\sigma=i}}H_j^W(V_\sigma,\mathcal{N}_\Q(W)).$$

We have that $V_\sigma\neq\emptyset$ if and only if $\bigcap_{T\in\sigma}C_T\neq\emptyset$, and applying Proposition \ref{prop:boundarycomponentsareproducts} inductively, this happens if and only if the vertices of $\sigma$ form a chain $T_{i_1}\subset T_{i_2}\subset\dotsm\subset T_{i_k}$. This observation shows that $N(\mathcal{V})=\textnormal{Flag}(\mathcal{N}_P)$. Now, applying Proposition \ref{prop:boundarycomponentsareproducts} inductively, it follows that $V_\sigma\approx\mathcal{U}(W,C_{T_{i_1}})$. Hence $H_\ast^W(V_\sigma,\mathcal{N}_\Q(W))=H_\ast^W(\mathcal{U}(W,C_{T_{i_1}}),\mathcal{N}_\Q(W))$, so the terms in the spectral sequence are the ones claimed.
\end{proof}

For later computations, note that Corollary \ref{cor:l2boundarycomponents} implies:

\begin{align*}
L_\Q^2b_\ast(\mathcal{U}(W,C_{\min\sigma})) & =\dim_{\mathcal{N}_\Q}H_\ast^W(\mathcal{U}(W,C_{\min\sigma})) \\
 & =L_\Q^2b_\ast(\fdavis_{L_{\min\sigma}})\\
 &=L_\Q^2b_\ast(\Sigma_{L_{\min\sigma}}).
\end{align*}

\section{Computations}

In this section we will use the fattened Davis complex to make concrete computations. We first begin by considering the case where the nerve $L$ of the Coxeter system $(W,S)$ is a graph. Note that for this special case $\Sigma_L$ is two-dimensional. We then briefly discuss how we can use our computations to produce examples of Coxeter groups for which the Weighted Singer Conjecture holds. We then direct our attention to quasi-L\'{a}nner groups, and finish with computations for $2$-spherical Coxeter groups whose corresponding nerves are no longer restricted to be graphs.

Let $K_n$ denote the complete graph on $n$ vertices. Recall that a Coxeter system is \emph{$2$-spherical} if the one-skeleton of its nerve is $K_n$ for some $n$. For the purpose of figures and examples, we will distinguish the special case where the labeled nerve $L=K_n(3)$, where $K_n(3)$ denotes the complete graph on $n$ vertices with every edge labeled by $3$.

Unless stated otherwise, the standing assumption in this section is that $\Q\geq\mathbf{1}$.

\subsection{The case where $L$ is a graph.} Suppose that the labeled nerve $L$ is the one-skeleton of an $n$-dimensional cell complex $\Lambda$, where $n\geq 2$. We say that a $2$-cell of $\Lambda$ is \emph{Euclidean} if the corresponding special subgroup generated by the vertices of that cell is a Euclidean reflection group. Note that the only possible labels on a Euclidean cell are $m_{st}\in\{2,3,4,6\}$.

Before proving the main theorem of this section, we begin with a lemma. The special case of the lemma when $\Q=\mathbf{1}$ is closely related to a result of Schroeder \cite[Theorem 4.6]{Schroeder1}. We provide an argument which is analogous to that of Schroeder in his proof.

\begin{lemma}
\label{lemma:l2spherecellulations}
Suppose that the labeled nerve $L$ is the one-skeleton of a cellulation of $S^2$. Then
    $$L_\Q^2b_2(\Sigma_L)=0\text{ for } \Q\leq \mathbf{1}.$$
\end{lemma}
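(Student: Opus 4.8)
The plan is to use the fattened Davis complex machinery developed above, taking the nerve $L$ itself (the one-skeleton of the cellulation $\Lambda$ of $S^2$) and building $\fdavis_L$ as a homology $n$-manifold with boundary, with $n$ chosen so that $n-1 > \dim L = 1$; the most economical choice is to realize $L$ as a subcomplex of the boundary of a simplex, but any $P^\ast$ that is a $GHS^{n-1}$ containing $L$ works. Since $\dim L = 1$, every $2$-cell of $\Lambda$ fails to be spherical (the only spherical subsets are vertices, edges, and the empty set), so the non-spherical faces we remove are abundant. By Corollary \ref{cor:Hsigma=Hfdavis} it suffices to show $L_\Q^2 H_2(\fdavis_L) = 0$ for $\Q \leq \mathbf{1}$.

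The strategy is to attack this via weighted Poincar\'{e} duality together with the structure of $\partial\fdavis_L$. By Proposition \ref{prop:fdavismnfld}, $\fdavis_L$ is a homology $n$-manifold with boundary, so $L_\Q^2 H_2(\fdavis_L) \cong L_{\Q^{-1}}^2 H_{n-2}(\fdavis_L, \partial\fdavis_L)$, and since $\Q \leq \mathbf{1}$ means $\Q^{-1} \geq \mathbf{1}$, I would try to reduce to a vanishing statement in high degree. First I would bring in the long exact sequence of the pair $(\fdavis_L, \partial\fdavis_L)$ to relate $L_{\Q^{-1}}^2 H_{n-2}(\fdavis_L,\partial\fdavis_L)$ to $L_{\Q^{-1}}^2 H_{n-2}(\fdavis_L)$ and $L_{\Q^{-1}}^2 H_{n-3}(\partial\fdavis_L)$. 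Since $\vcd W = 2$ (the nerve is a graph), Corollary \ref{cor:vanishingabovevcd} kills $L_{\Q^{-1}}^2 H_k(\fdavis_L) = L_{\Q^{-1}}^2 H_k(\Sigma_L)$ for all $k > 2$; with a careful choice of $n$ (e.g.\ $n$ large) this handles the $\fdavis_L$ term and reduces everything to controlling $L_{\Q^{-1}}^2 H_{n-3}(\partial\fdavis_L)$.

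The boundary is where the real work lies. Using Proposition \ref{prop:boundaryfdavis}, $\partial\fdavis_L$ is covered by the pieces $\mathcal{U}(W, C_T)$ for $T \in \mathcal{N}_P$, and by Corollary \ref{cor:l2boundarycomponents} each such piece has $L_{\Q^{-1}}^2$-Betti numbers equal to those of $\Sigma_{L_T}$, where $L_T$ is the subcomplex of $L$ on the vertex set $T$. The key point is that for $\Q^{-1} \geq \mathbf{1}$, i.e.\ the ``large'' side, these sub-Davis complexes should have their weighted $L^2$-cohomology concentrated in high degree: the relevant non-spherical $T$ are exactly the vertex sets of the $2$-cells of $\Lambda$ (and their faces that are non-spherical, but edges and vertices are spherical), so $W_T$ for a $2$-cell is an infinite Coxeter group with nerve a circle, which is either a Euclidean triangle-group (giving concentration in degree $2$ by Theorem \ref{thm:weightedeuclidean}) or more generally a $2$-dimensional hyperbolic-type group whose $L_{\Q^{-1}}^2 H_0$ vanishes since $\Q^{-1}$ is outside the region of convergence (Proposition \ref{prop:betti0}) and whose top $L^2$-Betti number can be controlled. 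Feeding this into the Mayer--Vietoris spectral sequence of Proposition \ref{prop:specialspectralsequence}, whose $E_1$ page involves $H_j^W(\mathcal{U}(W, C_{\min\sigma}))$ indexed over flags in $\mathcal{N}_P$, one gets that the spectral sequence is concentrated in a controlled range of $(i,j)$, and I would then run a counting/Euler-characteristic argument — mirroring Schroeder's proof of \cite[Theorem 4.6]{Schroeder1} — to conclude that $L_{\Q^{-1}}^2 H_{n-3}(\partial\fdavis_L)$ vanishes, or more precisely that the only surviving contribution sits in a degree that cannot feed into $L_\Q^2 H_2(\fdavis_L)$. I expect the main obstacle to be exactly this bookkeeping: tracking which degrees of the spectral sequence survive, and matching them correctly against the degrees forced by Poincar\'{e} duality and the dimension count — in particular making sure the Euclidean versus non-Euclidean $2$-cell distinction plays out so that no class lands in the fatal degree. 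The hypothesis that $L$ cellulates $S^2$ (rather than an arbitrary graph) should enter through the global topology of the cover's nerve, giving the Euler-characteristic input that closes the argument.
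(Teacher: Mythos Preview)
Your approach has a genuine circularity that you have not resolved, and it misses the actual source of the vanishing.

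First, the dimension $n$ of $\fdavis_L$ is not a free parameter: the construction requires $P^\ast$ to be a $GHS^{n-1}$ with vertex set exactly $S$, so $n\le |S|-1$.  There are only two natural choices of $P^\ast$ here, and both run into trouble.  If you take $P^\ast=\Lambda$ (the cellulation of $S^2$ itself), then $n=3$ and $\mathcal{N}_P$ does consist only of the $2$-cells, as you want.  But then Poincar\'e duality gives $L_\Q^2 H_2(\fdavis_L)\cong L_{\Q^{-1}}^2 H_1(\fdavis_L,\partial\fdavis_L)$, and the long exact sequence forces you to control $L_{\Q^{-1}}^2 H_1(\fdavis_L)=L_{\Q^{-1}}^2 H_1(\Sigma_L)$ for $\Q^{-1}\ge\mathbf 1$.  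That is precisely the conclusion of Theorem~\ref{thm:l2cellulationghs} (in the Euclidean-cell case), whose proof \emph{uses} the present lemma as input; without the Euclidean hypothesis it is not even known.  If instead you take $P^\ast=\partial\Delta^{|S|-1}$ to make $n$ large and kill $L_{\Q^{-1}}^2 H_{n-2}(\Sigma_L)$ via $\vcd$, then $\mathcal{N}_P$ contains \emph{every} non-spherical proper $T\subset S$, so the boundary pieces $\Sigma_{L_T}$ have nerves that are arbitrary full subgraphs of $L$, and you need their high-degree $L_{\Q^{-1}}^2$-Betti numbers---which is the same problem you started with, on graphs that need not be one-skeleta of cellulations of $S^2$.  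No Euler-characteristic bookkeeping on the nerve of the cover rescues this.

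The paper's proof is entirely different and does not use $\fdavis_L$ at all.  It first reduces to $\Q=\mathbf 1$ via Lemma~\ref{lemma:pushingupcycles} (this is where $\vcd W\le 2$ enters).  Then, for $\Q=\mathbf 1$, it cones off each $2$-cell of $\Lambda$ by a right-angled cone, producing a nerve $L'$ that is a genuine triangulation of $S^2$; Mayer--Vietoris at each step gives an injection $L_{\mathbf 1}^2 H_2(\Sigma_L)\hookrightarrow L_{\mathbf 1}^2 H_2(\Sigma_{L'})$.  Since $\Sigma_{L'}$ is now a closed $3$-manifold, the theorem of Lott--L\"uck together with Geometrization gives $L_{\mathbf 1}^2 H_\ast(\Sigma_{L'})=0$.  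This deep $3$-manifold input is the real content of the lemma, and your proposal never invokes it; the fattened Davis machinery alone cannot supply it.
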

\begin{proof}
In light of Lemma \ref{lemma:pushingupcycles}, we must show that $L_\mathbf{1}^2b_2(\Sigma_L)=0$. We begin by building $L$ to a triangulation of $S^2$ by coning on empty $2$-cells and labeling the new edges by $2$'s, at each step keeping track of the $L_\mathbf{1}^2$-(co)homology with a Mayer--Vietoris sequence. More precisely, start with $T_1\subset S$ corresponding to an empty $2$-cell $L_{T_1}$ in $L$ and denote by $CL_{T_1}$ the right-angled cone on $L_{T_1}$. The corresponding special subgroup $W_{T_1}$ is infinite, and it acts properly and cocompactly by reflections on either $\mathbb{R}^2$ or $\mathbb{H}^2$. In both cases $L_\mathbf{1}^2H_2(\Sigma_{L_{T_1}})=0$ and hence the K\"{u}nneth formula implies that $L_\mathbf{1}^2H_2(\Sigma_{CL_{T_1}})=0$. We have the following Mayer--Vietoris sequence:

$$\cdot\cdot\cdot \longrightarrow L_\mathbf{1}^2H_2(\Sigma_{L_{T_1}}) \longrightarrow  L_\mathbf{1}^2H_2(\Sigma_{CL_{T_1}}) \oplus L_\mathbf{1}^2H_2(\Sigma_L)\overset{f_1}{\longrightarrow} L_\mathbf{1}^2H_2(\Sigma_{L\cup CL_{T_1}}) \longrightarrow\cdot\cdot\cdot
$$

In particular, the map $f_1$ is injective. We then choose another $T_2\subset S$ corresponding to an empty $2$-cell $L_{T_2}$ in $L$ and denote by $CL_{T_2}$ the right-angled cone on $L_{T_2}$. By a similar argument, the map $f_2$ in the following Mayer--Vietoris sequence is injective:

$$\cdot\cdot\cdot \longrightarrow L_\mathbf{1}^2H_2(\Sigma_{CL_{T_2}}) \oplus L_\mathbf{1}^2H_2(\Sigma_{L\cup CL_{T_2}})\overset{f_2}{\longrightarrow} L_\mathbf{1}^2H_2(\Sigma_{L\cup CL_{T_1}\cup CL_{T_2}}) \longrightarrow\cdot\cdot\cdot
$$

Proceed inductively until all empty $2$-cells have been coned off and denote the newly promoted nerve by $L'$. The $f_i$'s yield a sequence of injective maps:

$$\begin{tikzcd}[column sep=small, row sep=small]
 L_\mathbf{1}^2H_2(\Sigma_L) \arrow[hookrightarrow]{r} & L_\mathbf{1}^2H_2(\Sigma_{L\cup CL_{T_1}}) \arrow[hookrightarrow]{r} & \cdot\cdot\cdot\arrow[hookrightarrow]{r} & L_\mathbf{1}^2H_2(\Sigma_{L'})
\end{tikzcd}$$

Since $L'$ is a triangulation of $S^2$, it follows that $\Sigma_{L'}$ is a 3-manifold. Now, a result of Lott and Luck \cite{LottLuck}, in conjunction with the validity of the Geometrization Conjecture for $3$-manifolds \cite{Perelman}, implies that $L_\mathbf{1}^2H_\ast(\Sigma_{L'})$ vanishes in all dimensions. In particular, $L_\mathbf{1}^2b_2(\Sigma_L)=0$.
\end{proof}

\begin{remark}
Schroeder proves a more general theorem for $\Q=\mathbf{1}$ \cite[Theorem 4.6]{Schroeder1}. A metric flag complex $L$ is \emph{planar} if it can be embedded as a proper subcomplex of a triangulation of the $2$-sphere. Schroeder proves that if the nerve $L$ of a Coxeter system is planar, then $L_\mathbf{1}^2b_k(\Sigma_L)=0$ for $k\geq 2$. If $L$ is planar and $W$ is the corresponding Coxeter group, then \cite[Corollary 8.5.5]{Davis} implies that $\vcd W\leq 2$. Therefore we can use Lemma \ref{lemma:pushingupcycles} to deduce that $L_\Q^2b_k(\Sigma_L)=0$ for $k\geq 2$ and $\Q\leq \mathbf{1}$.
\end{remark}

\begin{theorem}
\label{thm:l2cellulationghs}
Suppose that the labeled nerve $L$ is the one-skeleton of a cell complex that is a $GHS^n$, $n\geq 2$, where all $2$-cells are Euclidean, and let $(W,S)$ denote the corresponding Coxeter system. Then $L_\Q^2b_\ast(\Sigma_L)$ is concentrated in degree $2$.

Furthermore,
\begin{align*}
L_\Q^2b_2(\Sigma_L)&= 1-\sum_{s\in S}\frac{q_s}{1+q_s}+\sum_{\substack{s,t\in S\\ m_{st}=2}}\frac{q_sq_t}{1+q_s+q_t+q_sq_t} + \sum_{\substack{s,t\in S\\ m_{st}=3}}\frac{q_s^3}{1+2q_s+2q_s^2+q_s^3}+\\
&+\sum_{\substack{s,t\in S\\ m_{st}=4}}\frac{q_s^2q_t^2}{1+q_s+q_t+2q_sq_t+q_s^2q_t+q_sq_t^2+q_s^2q_t^2}+\\
&+\sum_{\substack{s,t\in S\\ m_{st}=6}}\frac{q_s^3q_t^3}{1+q_s+q_t+2q_sq_t+q_s^2q_t+q_sq_t^2+2q_s^2q_t^2+q_s^2q_t^3+q_s^3q_t^2+q_s^3q_t^3}.
\end{align*}
\end{theorem}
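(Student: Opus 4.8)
The plan is to prove the concentration statement first and then read off the formula from the weighted Euler characteristic. Since $L$ is a graph, $\Sigma_L$ is two-dimensional, so $L_\Q^2b_k(\Sigma_L)=0$ for $k\geq 3$; since $W$ is infinite and $\Q\geq\mathbf{1}$ the growth series $W(\Q)$ diverges, so $\Q\notin\mathcal{R}$ and Proposition \ref{prop:betti0} gives $L_\Q^2b_0(\Sigma_L)=0$. Hence the concentration statement is equivalent to $L_\Q^2b_1(\Sigma_L)=0$. To obtain this I would build the fattened Davis chamber from $P^\ast=\Lambda$ itself; this is legitimate because the nerve $L=\Lambda^{(1)}$ is a subcomplex of $\Lambda$ and $\dim\Lambda=n>1=\dim L$, and it makes $\fdavis_L$ a homology $(n+1)$-manifold with boundary (Proposition \ref{prop:fdavismnfld}) whose set $\mathcal{N}_P$ consists exactly of the non-spherical subsets of $S$ lying in the vertex set of some cell of $\Lambda$. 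Since $\vcd W\leq\dim\Sigma_L=2$, we have $(n+1)-\vcd W\geq n-1$, so Lemma \ref{lemma:h1bdry} applies: case (ii) covers $n\geq 3$ and also $n=2$ with $\vcd W\leq 1$, while in the one remaining case ($n=2$, $\vcd W=2$) the side hypothesis $L_{\Q^{-1}}^2b_2(\fdavis_L)=L_{\Q^{-1}}^2b_2(\Sigma_L)=0$ is exactly Lemma \ref{lemma:l2spherecellulations}, since $\Lambda$ is then a triangulation of $S^2$. In every case Lemma \ref{lemma:h1bdry} reduces the theorem to a single vanishing statement, namely $L_\Q^2b_1(\partial\fdavis_L)=0$.

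The computation of $L_\Q^2b_1(\partial\fdavis_L)$ is the heart of the argument. I would run the Mayer--Vietoris spectral sequence of Proposition \ref{prop:specialspectralsequence} for the cover $\{\mathcal{U}(W,C_T)\}_{T\in\mathcal{N}_P}$ of $\partial\fdavis_L$; by Corollary \ref{cor:l2boundarycomponents} its $E_1$-page is built from the $L_\Q^2$-Betti numbers of the Davis complexes $\Sigma_{L_T}$ of the special subgroups $W_T$, $T\in\mathcal{N}_P$. Each such $W_T$ is infinite, so $L_\Q^2b_0(\Sigma_{L_T})=0$ and all $E_1^{i,0}$ vanish; what remains is to understand the groups $L_\Q^2H_1(\Sigma_{L_T})$. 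These vanish when $T$ spans a Euclidean $2$-cell, by the Euclidean hypothesis together with Theorem \ref{thm:weightedeuclidean}, and when $T$ is the vertex set of a higher cell $C$, since $\partial C$ is again a generalized homology sphere with all $2$-cells Euclidean, so one may induct on $\dim C$ (in particular a tetrahedral $3$-cell yields a finite-covolume hyperbolic simplex reflection group with all vertices ideal, covered by Theorem \ref{thm:l2QL}, which is proved independently). The only $T$ with $L_\Q^2H_1(\Sigma_{L_T})\neq 0$ are the $D_\infty$-subgroups $W_{\{s,t\}}$ with $m_{st}=\infty$, which can occur only when $\{s,t\}$ lies inside a non-simplicial cell $C$; their contribution to $E_1^{0,1}$ is cancelled by the $d_1$-differential associated to the chain $\{s,t\}\subsetneq\textnormal{Vert}(C)$, using the identification of Corollary \ref{cor:l2boundarycomponents}(ii). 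When $n=2$ none of this is needed: $\Lambda$ is a triangulation, the only cells containing a non-spherical subset are Euclidean triangles, no two of which are nested, so the cover is a disjoint union and the result is immediate from Theorem \ref{thm:weightedeuclidean}. The main obstacle is the general case: showing that, once all the differentials of the spectral sequence over $\textnormal{Flag}(\mathcal{N}_P)$ are taken into account, nothing survives in total degree $1$.

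With concentration in degree $2$ established, the formula is bookkeeping. Then $\chi_\Q(\Sigma_L)=L_\Q^2b_2(\Sigma_L)$, and Proposition \ref{prop:eulerchar} gives $\chi_\Q(\Sigma_L)=1/W(\Q)$. Expanding $1/W(\Q)$ by Theorem \ref{thm:growthseriesformula} over the poset $\mathcal{S}$ of spherical subsets, and using that $\mathcal{S}$ as a simplicial complex is the $1$-dimensional nerve $L$ (so its only elements are $\emptyset$, the singletons, and the edges of $L$), yields $1/W(\Q)=1-\sum_{s\in S}1/W_{\{s\}}(\Q^{-1})+\sum_{\{s,t\}\in E(L)}1/W_{\{s,t\}}(\Q^{-1})$. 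Finally, every edge of $L$ is a face of a $2$-cell (a general feature of the $1$-skeleton of a $GHS^n$, $n\geq 2$), hence carries a label $m_{st}\in\{2,3,4,6\}$; substituting $W_{\{s\}}(\Q^{-1})=1+q_s^{-1}$ and the growth series of the dihedral groups $I_2(m_{st})$ (with $q_s=q_t$ whenever $m_{st}=3$), and grouping the edge sum according to the label, produces exactly the displayed expression for $L_\Q^2b_2(\Sigma_L)$.
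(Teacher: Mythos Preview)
Your proposal is essentially the paper's proof: fatten with $P^\ast=\Lambda$, reduce $L_\Q^2b_1(\Sigma_L)=0$ to $L_\Q^2b_1(\partial\fdavis_L)=0$ via Lemma~\ref{lemma:h1bdry} (invoking Lemma~\ref{lemma:l2spherecellulations} for the side hypothesis when $n=2$), then kill $E_1^{0,1}$ and $E_1^{1,0}$ in the spectral sequence of Proposition~\ref{prop:specialspectralsequence} by induction on $n$, with Theorem~\ref{thm:weightedeuclidean} handling the $2$-cells. The formula is then read off from $\chi_\Q$ exactly as you describe.

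Two small slips. The paper does not need your digression on $D_\infty$-subgroups and $d_1$-cancellation: in its argument every $T\in\mathcal{N}_P$ is the vertex set of a cell $\sigma_T$ of $\Lambda$, and $W_T$ is then either a Euclidean group acting on $\mathbb{R}^2$ (when $\sigma_T$ is a $2$-cell, so $L_\Q^2b_1=0$ by Theorem~\ref{thm:weightedeuclidean}) or is covered by the inductive hypothesis applied to the lower-dimensional $GHS$ $\partial\sigma_T$; hence $E_1^{0,1}=0$ outright and no differentials are needed. If one instead takes $\mathcal{N}_P$ to consist of all non-spherical $T$ contained in the vertex set of some cell, as you do, then your list of exceptional $T$ is incomplete --- e.g.\ three consecutive vertices of a square give $\mathbb{Z}/2\times D_\infty$, which also has $L_\Q^2b_1\neq 0$ --- so the cancellation argument would require more work than you indicate. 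Separately, your claim that ``when $n=2$, $\Lambda$ is a triangulation'' is false: a Euclidean $2$-cell can be a square with all edge labels $2$ (the group $D_\infty\times D_\infty$). Lemma~\ref{lemma:l2spherecellulations} only requires a cellulation of $S^2$, which is precisely what a $2$-dimensional $GHS$ cell complex is. Neither slip affects the skeleton of the argument, which matches the paper's.
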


\begin{proof}
Proposition \ref{prop:betti0} implies that $L_\Q^2b_0(\Sigma_L)=0$. Proposition \ref{prop:eulerchar}, along with Theorem \ref{thm:growthseriesformula}, explicitly compute the formula for $L_\Q^2b_2(\Sigma_L)$. We now turn our attention to showing $L_\Q^2b_1(\Sigma_L)=0$.

We prove the theorem by induction on $n$. For the base case $n=2$, first note that for every $T\in\mathcal{N}_P$, $\sigma_T$ is Euclidean. Hence Proposition \ref{prop:boundarycomponentsareproducts} implies that each $C_T$ appearing in $\partial\fchamber$ corresponds to a set $T\in\mathcal{N}_P$ where $W_T$ is a Euclidean reflection group. Thus Corollary \ref{cor:l2boundarycomponents} and Theorem \ref{thm:weightedeuclidean} imply that $L_\Q^2b_1(\mathcal{U}(W,C_T))=0$. This and Proposition \ref{prop:betti0} imply that the $E_1^{0,1}$ and $E_1^{1,0}$ terms in the $E_1$ sheet of the spectral sequence in Proposition \ref{prop:specialspectralsequence} are zero, which in turn implies that $L_\Q^2b_1(\partial\fdavis_L)=0$. Now, note that $\fdavis_L$ is three-dimensional and $\vcd W=2$. Moreover, by Lemma \ref{lemma:l2spherecellulations}, $L_{\Q^{-1}}^2H_2(\Sigma_L)=0$. Therefore, via Lemma \ref{lemma:h1bdry} (i), we reach the conclusion that $L_\Q^2b_1(\Sigma_L)=0$.

Now, suppose the theorem is true for $m<n$. Since $\Sigma_L$ is two-dimensional, Lemma \ref{lemma:h1bdry} (ii) tells us that we are done if we show that $L_\Q^2b_1(\partial\fdavis_L)=0$. Let $T\in\mathcal{N}_P$. Then $\sigma_T$ is the $(\partial\sigma_T,T)$-chamber, where $\sigma_T$ is the geometric cell in $P^\ast$ spanned by $T$. In particular, $\partial\sigma_T$ is a cell complex that is $GHS^m$, $m<n$, and since all $2$-cells of $P^\ast$ are Euclidean, it follows that all $2$-cells of $\partial\sigma_T$ are Euclidean. Hence, by induction and Corollary \ref{cor:l2boundarycomponents}, it follows that for every $T\in\mathcal{N}_P$, $L_\Q^2b_1(\mathcal{U}(W,C_T))=L_\Q^2b_1(\Sigma_{L_T})=0$. This and Proposition \ref{prop:betti0} imply that the $E_1^{0,1}$ and $E_1^{1,0}$ terms in the $E_1$ sheet of the spectral sequence in Proposition \ref{prop:specialspectralsequence} are zero, which in turn implies that $L_\Q^2b_1(\partial\fdavis_L)=0$.
\end{proof}

Consider the special case of Theorem \ref{thm:l2cellulationghs} when $n=2$. In this case, Theorem \ref{thm:l2cellulationghs}, along with Lemma \ref{lemma:l2spherecellulations}, explicitly compute the $L^2_\Q$-Betti numbers for all $\Q$: they are always concentrated in a single dimension. We emphasize this in the following corollary.

\begin{cor}
\label{cor:l2cellulationghs2}
Suppose that the labeled nerve $L$ is the one-skeleton of a cell complex that is a $GHS^2$, where all $2$-cells are Euclidean.
\begin{itemize}
  \item If $\Q\in\bar{\mathcal{R}}$, then $L_\Q^2H_\ast(\Sigma_L)$ is concentrated in dimension 0.
  \item If $\Q\notin\mathcal{R}$ and $\Q\leq\mathbf{1}$, then $L_\Q^2H_\ast(\Sigma_L)$ is concentrated in dimension 1.
  \item If $\Q\geq\mathbf{1}$, then $L_\Q^2H_\ast(\Sigma_L)$ is concentrated in dimension 2.
\end{itemize}
Furthermore,
\begin{align*}
\chi_\Q(\Sigma_L)&= 1-\sum_{s\in S}\frac{q_s}{1+q_s}+\sum_{\substack{s,t\in S\\ m_{st}=2}}\frac{q_sq_t}{1+q_s+q_t+q_sq_t} + \sum_{\substack{s,t\in S\\ m_{st}=3}}\frac{q_s^3}{1+2q_s+2q_s^2+q_s^3}+\\
&+\sum_{\substack{s,t\in S\\ m_{st}=4}}\frac{q_s^2q_t^2}{1+q_s+q_t+2q_sq_t+q_s^2q_t+q_sq_t^2+q_s^2q_t^2}+\\
&+\sum_{\substack{s,t\in S\\ m_{st}=6}}\frac{q_s^3q_t^3}{1+q_s+q_t+2q_sq_t+q_s^2q_t+q_sq_t^2+2q_s^2q_t^2+q_s^2q_t^3+q_s^3q_t^2+q_s^3q_t^3}.
\end{align*}
\end{cor}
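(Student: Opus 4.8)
The corollary repackages results already at hand --- Theorem \ref{thm:l2cellulationghs}, Lemma \ref{lemma:l2spherecellulations}, Proposition \ref{prop:betti0}, and Proposition \ref{prop:eulerchar} --- so the plan is mostly bookkeeping. First I would record two observations. A $GHS^2$ is homeomorphic to $S^2$ (a homology $2$-manifold is a genuine surface, and a closed surface with the homology of $S^2$ must be $S^2$), so the cell complex in the hypothesis is a cellulation of $S^2$ and $L$ is its one-skeleton; in particular Lemma \ref{lemma:l2spherecellulations} applies directly. Second, since $L$ is a graph the poset $\mathcal{S}$ of spherical subsets consists only of $\emptyset$, the singletons, and the edges $\{s,t\}$ of $L$; since $S^2$ is a closed surface every such edge lies on a $2$-cell, so the hypothesis that all $2$-cells are Euclidean forces every edge label into $\{2,3,4,6\}$, these being the only labels possible on a Euclidean cell.

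For the concentration statements I would split on $\Q$. The case $\Q\geq\mathbf{1}$ is precisely Theorem \ref{thm:l2cellulationghs} with $n=2$: $L_\Q^2b_\ast(\Sigma_L)$ is concentrated in degree $2$. For $\Q\leq\mathbf{1}$, Lemma \ref{lemma:l2spherecellulations} gives $L_\Q^2b_2(\Sigma_L)=0$, and if also $\Q\notin\mathcal{R}$ then Proposition \ref{prop:betti0} gives $L_\Q^2b_0(\Sigma_L)=0$; since $\Sigma_L$ is two-dimensional, $L_\Q^2H_\ast(\Sigma_L)$ is then concentrated in degree $1$. For $\Q\in\bar{\mathcal{R}}$, concentration in degree $0$ holds on the open region $\mathcal{R}$ by Proposition \ref{prop:betti0} and on all of $\bar{\mathcal{R}}$ by the weighted $L^2$-computations of \cite{DDJO}; alternatively, for $\Q\in\partial\mathcal{R}$ one can argue directly that the growth series, being a rational function with nonnegative coefficients, has open domain of convergence with $\partial\mathcal{R}$ contained in the zero set of its denominator, so $W(\Q)=+\infty$ and hence $\chi_\Q(\Sigma_L)=1/W(\Q)=0$, which together with $L_\Q^2b_0=0$ and $L_\Q^2b_2=0$ forces $L_\Q^2b_1=0$.

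The displayed formula for $\chi_\Q(\Sigma_L)$ then follows by expanding Davis's identity $1/W(\Q)=\sum_{T\in\mathcal{S}}(-1)^{|T|}/W_T(\Q^{-1})$ (Theorem \ref{thm:growthseriesformula}) over the spherical subsets identified above, using the explicit polynomial growth series of $\mathbb{Z}_2$, $\mathbb{Z}_2\times\mathbb{Z}_2$, and the dihedral Coxeter groups $I_2(3),I_2(4),I_2(6)$, and invoking $\chi_\Q(\Sigma_L)=1/W(\Q)$ from Proposition \ref{prop:eulerchar}; equivalently, it is the formula for $L_\Q^2b_2(\Sigma_L)$ recorded in Theorem \ref{thm:l2cellulationghs}, valid off $\{\Q\geq\mathbf{1}\}$ because two rational functions of $\Q$ agreeing on an open set coincide. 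The only step that is not purely formal is the treatment of $\Q\in\bar{\mathcal{R}}\setminus\mathcal{R}$, which Proposition \ref{prop:betti0} does not cover and which must be supplied either by the full-range \cite{DDJO} computation or by the Euler-characteristic argument above; everything else is assembly.
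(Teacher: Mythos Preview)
Your proposal is correct and mirrors the paper's approach: the paper states the corollary without a separate proof, presenting it as the immediate consequence of Theorem \ref{thm:l2cellulationghs} (the case $\Q\geq\mathbf{1}$) together with Lemma \ref{lemma:l2spherecellulations} (the case $\Q\leq\mathbf{1}$), with Proposition \ref{prop:betti0} separating the two $\Q\leq\mathbf{1}$ sub-cases; your observation that a $GHS^2$ is necessarily homeomorphic to $S^2$ is exactly what makes Lemma \ref{lemma:l2spherecellulations} applicable. One small caveat on your alternative Euler-characteristic argument for $\Q\in\partial\mathcal{R}$: you invoke $L_\Q^2b_2=0$, which Lemma \ref{lemma:l2spherecellulations} supplies only under $\Q\leq\mathbf{1}$, and in the multiparameter setting $\partial\mathcal{R}$ need not lie inside $\{\Q\leq\mathbf{1}\}$ --- so the clean route is the one you list first, namely citing the \cite{DDJO} computation for all of $\bar{\mathcal{R}}$.
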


If we place some restrictions on either our labels or the cell complex, then the formulas in Theorem \ref{thm:l2cellulationghs} become relatively simple, as illustrated by the following corollaries.

\begin{cor}
Suppose that $L$ is the one-skeleton of a cell complex that is a $GHS^n$, $n\geq 2$, where all $2$-cells are $2$-simplices. Give $L$ the labels $m_{st}=3$. Then $L_q^2b_\ast(\Sigma_L)$ is concentrated in degree $2$.

Furthermore, $$L_q^2b_2(\Sigma_L)=1-\frac{Vq}{1+q}+\frac{Eq^3}{1+2q+2q^2+q^3},$$
where $V$ and $E$ are the number of vertices and edges of $L$, respectively.
\end{cor}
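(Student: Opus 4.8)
The plan is to obtain this corollary as a direct specialization of Theorem \ref{thm:l2cellulationghs}. First I would verify that the hypotheses of that theorem are met. Each $2$-cell of $\Lambda$ is a $2$-simplex, so its boundary is a $3$-circuit in $L$ on vertices, say, $\{s,t,u\}$, all three of whose edges carry the label $3$. Hence the special subgroup $W_{\{s,t,u\}}$ is the Coxeter group whose Coxeter matrix has all off-diagonal entries equal to $3$; this is the affine Coxeter group $\widetilde{A}_2$, i.e.\ the $(3,3,3)$-triangle reflection group, which acts properly and cocompactly by reflections on $\mathbb{E}^2$. Thus every $2$-cell of $\Lambda$ is Euclidean, and Theorem \ref{thm:l2cellulationghs} applies verbatim, yielding that $L_q^2b_\ast(\Sigma_L)$ is concentrated in degree $2$.

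Next I would justify the use of a single parameter. Since $L$ is the one-skeleton of a $GHS^n$ with $n\geq 2$, it is connected, and since every edge of $L$ carries the \emph{odd} label $3$, any two generators joined by an edge are conjugate in $W$; connectedness then forces all elements of $S$ to be conjugate, so the admissible multiparameter is a single positive real number $q$, justifying the notation $L_q^2$.

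For the explicit formula I would simply specialize the formula in Theorem \ref{thm:l2cellulationghs} to $q_s=q$ for all $s$: with $|S|=V$ the term $-\sum_{s\in S}q_s/(1+q_s)$ becomes $-Vq/(1+q)$; since $L$ has no edges labelled $2$, $4$, or $6$, those three sums vanish; and since all $E$ edges carry the label $3$, the sum $\sum_{m_{st}=3}q_s^3/(1+2q_s+2q_s^2+q_s^3)$ collapses to $Eq^3/(1+2q+2q^2+q^3)$. This gives
$$L_q^2b_2(\Sigma_L)=1-\frac{Vq}{1+q}+\frac{Eq^3}{1+2q+2q^2+q^3}.$$
Equivalently, one may compute $\chi_q(\Sigma_L)=1/W(q)=L_q^2b_2(\Sigma_L)$ directly from Proposition \ref{prop:eulerchar} and Theorem \ref{thm:growthseriesformula}, using that the only spherical subsets of $S$ are $\emptyset$, the $V$ singletons with $W_s(q^{-1})=1+q^{-1}$, and the $E$ edges with $W_{\{s,t\}}(q^{-1})=1+2q^{-1}+2q^{-2}+q^{-3}$; there are no spherical subsets of rank $\geq 3$ because $\widetilde{A}_2$ is infinite.

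There is essentially no genuine obstacle here: all the real work sits inside Theorem \ref{thm:l2cellulationghs} (and, further back, Lemma \ref{lemma:l2spherecellulations}, Proposition \ref{prop:specialspectralsequence}, and Lemma \ref{lemma:h1bdry}). The only points requiring (routine) care are the identification of the $(3,3,3)$-triangle group as a Euclidean reflection group and the observation that no spherical subset has rank $\geq 3$, the latter guaranteeing that the growth-series expansion terminates at rank two and that the displayed expression is exactly the $q_s=q$ specialization of the formula in Theorem \ref{thm:l2cellulationghs}.
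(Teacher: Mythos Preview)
Your proposal is correct and takes essentially the same approach as the paper: the corollary is stated without proof, being an immediate specialization of Theorem~\ref{thm:l2cellulationghs} once one observes that the $(3,3,3)$-triangle group is the Euclidean group~$\widetilde{A}_2$ and that all generators are conjugate (forcing $\Q=q$). Your additional remarks on why no spherical subset has rank~$\geq 3$ and on the direct Euler-characteristic computation are accurate and helpful but not strictly needed.
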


Recall that an $n$-dimensional octahedron has $2n$ vertices and $2n(n-1)$ edges.

\begin{cor}
Suppose that $L$ the one skeleton of an $n$-dimensional octahedron with $n\geq 3$ and the labels $m_{st}=3$. Then $L_\Q^2b_\ast(\Sigma_L)$ is concentrated in degree $2$. Furthermore,
$$L_q^2b_2(\Sigma_L)=1-\frac{2nq}{1+q}+\frac{2n(n-1)q^3}{(1+2q+2q^2+q^3)}.$$
\end{cor}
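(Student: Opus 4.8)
The plan is to read this off from the preceding corollary by identifying the relevant $GHS$. First I would recall that the $n$-dimensional octahedron is the $n$-dimensional cross-polytope, and its boundary complex is a simplicial complex that is homeomorphic to $S^{n-1}$. In particular this boundary complex is a cellulation of $S^{n-1}$, hence a $GHS^{n-1}$, and since it is simplicial all of its $2$-cells are $2$-simplices. Its one-skeleton is precisely the graph $L$ of the statement, and, as noted just before the statement, $L$ has $V=2n$ vertices and $E=2n(n-1)$ edges.

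Next, since $n\geq 3$ we have $n-1\geq 2$, so the preceding corollary applies with $n$ there replaced by $n-1$ here: $L$ is the one-skeleton of a cell complex (the octahedral boundary) that is a $GHS^{n-1}$ all of whose $2$-cells are $2$-simplices, carrying the labels $m_{st}=3$. That corollary then gives that $L_q^2b_\ast(\Sigma_L)$ is concentrated in degree $2$ and that
$$L_q^2b_2(\Sigma_L)=1-\frac{Vq}{1+q}+\frac{Eq^3}{1+2q+2q^2+q^3}.$$
Substituting $V=2n$ and $E=2n(n-1)$ yields the asserted formula.

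I expect there to be essentially no obstacle beyond the dimension bookkeeping: one must keep track of the fact that the ``$n$-dimensional octahedron'' has an $(n-1)$-dimensional boundary complex, so it is the $GHS^{n-1}$ instance of the preceding corollary (not the $GHS^{n}$ instance) that is being invoked, which is precisely why the hypothesis here is $n\geq 3$ rather than $n\geq 2$. Everything else --- the conjugacy of all the generators (the edge-labels are all $3$, which is odd, and $L$ is connected) forcing a single parameter $q$, the identification of the nerve with the labeled graph $L$, and the vanishing of $L_q^2b_k(\Sigma_L)$ for $k\neq 2$ --- is inherited verbatim from the preceding corollary, and ultimately from Theorem~\ref{thm:l2cellulationghs} together with the observation that a $2$-simplex with all labels $3$ corresponds to the Euclidean reflection group $\widetilde{A}_2$.
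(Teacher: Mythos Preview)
Your proposal is correct and matches the paper's implicit argument: the corollary is stated without proof, immediately after the sentence ``Recall that an $n$-dimensional octahedron has $2n$ vertices and $2n(n-1)$ edges,'' so the intended derivation is exactly the substitution $V=2n$, $E=2n(n-1)$ into the preceding corollary applied to the $(n-1)$-dimensional boundary complex of the cross-polytope. Your observation about the dimension shift explaining the hypothesis $n\geq 3$ is on point.
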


\begin{cor}
\label{cor:l2kn3}
Let $L=K_n(3)$ with $n\geq 3$. Then $L_\Q^2b_\ast(\Sigma_L)$ is concentrated in degree $2$. Furthermore,
$$L_q^2b_2(\Sigma_L)=1-\frac{nq}{1+q}+\frac{n(n-1)q^3}{2(1+2q+2q^2+q^3)}.$$
\end{cor}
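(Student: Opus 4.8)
The plan is to realize $K_n$ as the one-skeleton of a suitable generalized homology sphere and invoke Theorem \ref{thm:l2cellulationghs}, handling the degenerate value $n=3$ separately. First note that since every edge of $K_n(3)$ carries the odd label $3$, any two generators joined by an edge are conjugate in $W$; as $K_n$ is connected, all of $S$ forms a single conjugacy class, so $\Q=(q,\dots,q)$ is forced and writing the multiparameter as a single $q$ is legitimate.

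For $n\geq 4$, the complete graph $K_n$ is exactly the one-skeleton of the boundary $\partial\Delta^{n-1}$ of the standard $(n-1)$-simplex, which is a simplicial triangulation of $S^{n-2}$ and hence a $GHS^{n-2}$ with $n-2\geq 2$. Every $2$-cell of $\partial\Delta^{n-1}$ is a $2$-simplex whose three edges all carry the label $3$ in the labeled nerve $K_n(3)$, so the special subgroup it spans is the Coxeter group with triangular diagram all of whose edges are labeled $3$, i.e. the affine reflection group $\widetilde{A}_2$ acting on $\mathbb{R}^2$; thus every $2$-cell is Euclidean. Theorem \ref{thm:l2cellulationghs} then applies and gives that $L_\Q^2b_\ast(\Sigma_L)$ is concentrated in degree $2$. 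For the formula I would specialize the displayed expression in Theorem \ref{thm:l2cellulationghs}: the sums indexed by $m_{st}=2,4,6$ are empty, the $m_{st}=3$ sum has one summand for each of the $\binom{n}{2}=\tfrac{n(n-1)}{2}$ edges of $K_n$, and $|S|=n$ with every $q_s=q$; collecting these reproduces $1-\tfrac{nq}{1+q}+\tfrac{n(n-1)q^3}{2(1+2q+2q^2+q^3)}$.

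For $n=3$, Theorem \ref{thm:l2cellulationghs} does not literally apply, since $\partial\Delta^2$ is only a $GHS^1$. However $L=K_3(3)$ is the nerve of the Euclidean reflection group $W=\widetilde{A}_2$, which acts properly and cocompactly on $\mathbb{R}^2$. I would therefore apply Theorem \ref{thm:weightedeuclidean} with $\Q\geq\mathbf{1}$ to conclude that $L_\Q^2H_\ast(\Sigma_L)$ is concentrated in dimension $2$, and then compute $L_q^2b_2(\Sigma_L)=\chi_\Q(\Sigma_L)=1/W(\Q)$ by Proposition \ref{prop:eulerchar}; expanding $1/W(\Q)$ via Theorem \ref{thm:growthseriesformula} over the poset of spherical subsets (the empty set, the three vertices, and the three edges, each spanning a copy of the finite dihedral group of order $6$) yields precisely the asserted formula, which is also the $n=2$ instance of the formula in Theorem \ref{thm:l2cellulationghs}. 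Alternatively, one may simply cellulate $S^2$ as the union of two triangular $2$-cells glued along $K_3$ and quote Corollary \ref{cor:l2cellulationghs2}.

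There is essentially no obstacle: all the work is already contained in Theorem \ref{thm:l2cellulationghs} and Theorem \ref{thm:weightedeuclidean}. The only points that require care are the bookkeeping in the algebraic simplification of the Betti-number formula (counting the edges of $K_n$ and discarding the vanishing summands) and recognizing explicitly that the case $n=3$ falls outside the hypotheses of Theorem \ref{thm:l2cellulationghs} and must be routed through the Euclidean computation instead.
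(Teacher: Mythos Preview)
Your proposal is correct and follows essentially the same approach as the paper, which presents this as a direct specialization of Theorem \ref{thm:l2cellulationghs} without a separate proof. Your treatment is in fact more careful than the paper's, since you explicitly note that $\partial\Delta^{n-1}$ is only a $GHS^{n-2}$ and hence Theorem \ref{thm:l2cellulationghs} does not literally cover $n=3$; your two workarounds (invoking Theorem \ref{thm:weightedeuclidean} for $\widetilde{A}_2$, or realizing $K_3$ as the one-skeleton of the two-triangle cellulation of $S^2$ and applying Corollary \ref{cor:l2cellulationghs2}) are both valid and fill a small gap that the paper leaves implicit.
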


\begin{remark}
Note that under the hypothesis of the above corollaries, all generators in $S$ are conjugate, so in this case $\Q=q$, where $q\geq 1$ is a positive real number.
\end{remark}

If we assume that $W$ is right-angled, we have the following consequences of Theorem \ref{thm:l2cellulationghs}.

\begin{cor}
Suppose that $L$ is the one-skeleton of a cell complex that is a $GHS^n$, $n\geq 2$, where all $2$-cells are $2$-cubes. Give $L$ the labels $m_{st}=2$. Then $L_\Q^2b_\ast(\Sigma_L)$ is concentrated in degree $2$.
Furthermore, $$L_\Q^2b_2(\Sigma_L)=1-\sum_{s\in S}\frac{q_s}{1+q_s}+\sum_{\{s,t\}\in\mathcal{S}}\frac{q_sq_t}{1+q_s+q_t+q_sq_t}.$$
\end{cor}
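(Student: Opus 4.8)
The plan is to obtain this corollary as a direct specialization of Theorem \ref{thm:l2cellulationghs}, so the work reduces to (i) checking that the hypotheses of that theorem are met, and (ii) collapsing its general formula in the right-angled case.

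First I would verify the hypotheses. Let $\Lambda$ denote the cell complex whose one-skeleton is $L$; by assumption $\Lambda$ is a $GHS^n$ with $n\geq 2$, so the only point left to check is that every $2$-cell of $\Lambda$ is Euclidean. A $2$-cell is a $2$-cube, i.e.\ a square whose four vertices $a,b,c,d$ span the $4$-cycle $a-b-c-d-a$ in $L$, with the two diagonals $\{a,c\}$ and $\{b,d\}$ absent from $L$ (hence $m_{ac}=m_{bd}=\infty$). With the labels $m_{st}=2$, the special subgroup on $\{a,b,c,d\}$ is then presented so that each of $a,c$ commutes with each of $b,d$ while $\langle a,c\rangle\cong\langle b,d\rangle\cong D_\infty$; thus this subgroup is $D_\infty\times D_\infty$, which acts properly and cocompactly by reflections on $\mathbb{R}^2$ (the tiling of the plane by unit squares). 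So every $2$-cell of $\Lambda$ is Euclidean, and Theorem \ref{thm:l2cellulationghs} applies: $L_\Q^2b_\ast(\Sigma_L)$ is concentrated in degree $2$, and the general formula for $L_\Q^2b_2(\Sigma_L)$ stated there holds.

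It then remains only to specialize that formula. Since $W$ is right-angled, every $m_{st}$ equals $2$ or $\infty$, so the summations over pairs with $m_{st}\in\{3,4,6\}$ in the theorem's formula are empty, leaving the $1$, the term $-\sum_{s\in S}\frac{q_s}{1+q_s}$, and the sum over pairs with $m_{st}=2$. Finally I would note that $m_{st}=2$ holds precisely when $\{s,t\}$ is a spherical subset, i.e.\ $\{s,t\}\in\mathcal{S}$, so that last sum is exactly $\sum_{\{s,t\}\in\mathcal{S}}\frac{q_sq_t}{1+q_s+q_t+q_sq_t}$, giving the asserted expression. (Equivalently, since the (co)homology sits in the even degree $2$, one has $L_\Q^2b_2(\Sigma_L)=\chi_\Q(\Sigma_L)=1/W(\Q)$ by Proposition \ref{prop:eulerchar}, and the formula is then the evaluation of Theorem \ref{thm:growthseriesformula} over $\mathcal{S}=\{\emptyset\}\cup\{\text{vertices}\}\cup\{\text{edges of }L\}$, there being no higher spherical subsets because $L$ is one-dimensional.) There is no real obstacle here; the only genuine verification is the identification of the right-angled group on a $4$-cycle with the Euclidean group $D_\infty\times D_\infty$, and everything else is bookkeeping.
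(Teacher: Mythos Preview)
Your proposal is correct and matches the paper's approach: the corollary is stated without proof precisely because it is an immediate specialization of Theorem~\ref{thm:l2cellulationghs}, and you carry out exactly that specialization. The one substantive check---that the special subgroup on the vertices of a $2$-cube is the Euclidean group $D_\infty\times D_\infty$---is handled correctly; note that your assertion that the diagonals are absent from $L$ is justified because $L$, being the nerve and one-dimensional, is triangle-free (a triangle with all labels $2$ would span a spherical triple, forcing a $2$-simplex in the nerve).
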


Analogous to the case where $L=K_n(3)$, let $C_n(2)$ denote the one-skeleton of an $n$-cube with edges labeled by 2. If we assume that $L=C_n(2)$ and that $\Q=q$, where $q$ is a positive real number, then we obtain simple formulas for the $L^2_\Q$-Betti numbers. Recall that an $n$-cube has $2^n$ vertices and $n2^{n-1}$ edges.

\begin{cor}
Let $L=C_n(2)$ with $n\geq 2$. Then $L_q^2b_\ast(\Sigma_L)$ is concentrated in degree $2$. Furthermore, $$L_q^2b_2(\Sigma_L)=1-\frac{2^nq}{1+q}+\frac{n2^{n-1}q^2}{1+2q+q^2}.$$
\end{cor}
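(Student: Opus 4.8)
The plan is to obtain this corollary as an immediate specialization of the preceding one (the corollary treating one-skeleta of $GHS^n$ cell complexes, $n\ge 2$, all of whose $2$-cells are $2$-cubes) to the case $L=C_n(2)$, $\Q=q$. The only things to check are that $C_n(2)$ genuinely occurs as such a one-skeleton and then to carry out the arithmetic collapsing the general formula.

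For $n\ge 3$, take $\Lambda$ to be the boundary complex of the cube $[0,1]^n$: it is homeomorphic to $S^{n-1}$, hence a $GHS^{n-1}$ with $n-1\ge 2$; its one-skeleton is the hypercube graph, i.e.\ $C_n(2)$; and each of its $2$-cells is a square, i.e.\ a $2$-cube. For $n=2$ the boundary $\partial[0,1]^2$ is merely a circle, so there one instead takes $\Lambda$ to be two squares glued along their common boundary $4$-cycle (a ``pillowcase''), a $GHS^2$ with one-skeleton $C_4=C_2(2)$ and square $2$-cells; alternatively, since the Coxeter group with nerve $C_2(2)$ is $D_\infty\times D_\infty$, which acts properly and cocompactly by reflections on $\mathbb{R}^2$, one may invoke Theorem~\ref{thm:weightedeuclidean} directly. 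In every case the special subgroup generated by the four vertices of a square $2$-cell is $D_\infty\times D_\infty$, a Euclidean reflection group, so the ``all $2$-cells Euclidean'' hypothesis underlying Theorem~\ref{thm:l2cellulationghs}, hence the preceding corollary, is satisfied; and since $\Sigma_L$ depends only on the nerve $L$, the choice of the auxiliary cell complex $\Lambda$ is immaterial.

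Granting this, the preceding corollary gives (under the standing assumption $\Q\ge\mathbf{1}$) that $L_\Q^2b_\ast(\Sigma_L)$ is concentrated in degree $2$ and that
\[
L_\Q^2b_2(\Sigma_L)=1-\sum_{s\in S}\frac{q_s}{1+q_s}+\sum_{\{s,t\}\in\mathcal{S}}\frac{q_sq_t}{1+q_s+q_t+q_sq_t}.
\]
Now one simply counts: the vertex set $S$ has $2^n$ elements, and the size-$2$ members of $\mathcal{S}$ are exactly the edges of the nerve $C_n(2)$, of which there are $n2^{n-1}$. Putting $q_s=q$ for all $s$ and using $1+q_s+q_t+q_sq_t=(1+q_s)(1+q_t)=(1+q)^2$, the two sums collapse to $\frac{2^nq}{1+q}$ and $\frac{n2^{n-1}q^2}{(1+q)^2}=\frac{n2^{n-1}q^2}{1+2q+q^2}$ respectively, which is exactly the asserted formula for $L_q^2b_2(\Sigma_L)$. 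I do not anticipate any genuine obstacle here: the only point requiring a little care is the degenerate case $n=2$, where $\partial[0,1]^2$ is one-dimensional and one must fall back on the pillowcase cellulation (or straight onto Theorem~\ref{thm:weightedeuclidean}); for $n\ge 3$ the boundary of the $n$-cube does the job directly, and the remainder is a routine substitution into the preceding corollary.
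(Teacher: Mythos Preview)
Your proposal is correct and follows exactly the approach the paper intends: the corollary is stated without proof as an immediate specialization of the preceding right-angled corollary, and your substitution of $|S|=2^n$ vertices and $n2^{n-1}$ edges with $q_s=q$ is precisely the intended computation. Your observation that the case $n=2$ requires the pillowcase cellulation (or a direct appeal to Theorem~\ref{thm:weightedeuclidean}) rather than $\partial[0,1]^2$ is a nice piece of extra care that the paper glosses over.
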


We can also allow ourselves to remove some edges from $L=K_n(3)$. We denote by $K_n^l(3)$ the complete graph on $n$ vertices, labeled by $3$'s and with $l$ edges removed. We have the following consequence of Corollary \ref{cor:l2kn3}.

\begin{cor}
Suppose that $L=K_n^{l}(3)$, where $n\geq 5$ and $l\leq n-4$. Then $L_q^2b_\ast(\Sigma_L)$ is concentrated in degree $2$. Furthermore,
$$L_q^2b_2(\Sigma_L)=1-\frac{nq}{1+q}+\frac{n(n-1)q^3}{2(1+2q+2q^2+q^3)}-\frac{lq^3}{1+2q+2q^2+q^3}.$$
\end{cor}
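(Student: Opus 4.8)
The plan is to deduce this from Theorem~\ref{thm:l2cellulationghs} in exactly the way Corollary~\ref{cor:l2kn3} is obtained, the only new point being to realize $L=K_n^l(3)$ as the one-skeleton of an admissible cell complex. First I would note that the labeled graph $K_n^l(3)$ is connected (deleting $l\leq n-4<n-1$ edges cannot disconnect $K_n$) and has all edge labels equal to $3$, so all generators of $W$ are conjugate and the multiparameter is a single positive real $q$, as in the statement. Then I would exhibit $K_n^l(3)$ as the one-skeleton of a cellulation $\Lambda$ of a generalized homology $m$-sphere with $m\geq 2$ in which every $2$-cell is a $2$-simplex; since the three vertices of such a $2$-cell are pairwise joined by edges labeled $3$, the corresponding special subgroup is $\widetilde{A}_2$, a Euclidean reflection group, so Theorem~\ref{thm:l2cellulationghs} applies and gives at once that $L_q^2b_\ast(\Sigma_L)$ is concentrated in degree $2$.

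For the realization I would use an iterated join of boundaries of simplices. When the $l$ deleted edges can be taken to span pairwise disjoint vertex pairs $T_1=\{s_1,t_1\},\dots,T_l=\{s_l,t_l\}$, set $S'=S\setminus(T_1\cup\cdots\cup T_l)$ and
\[
\Lambda=\partial\Delta^1_{T_1}\ast\cdots\ast\partial\Delta^1_{T_l}\ast\partial\Delta^{|S'|-1}_{S'}.
\]
This is the boundary complex of the repeated free join of those simplices, hence a PL-sphere, and its dimension is $n-l-2$; the hypothesis $l\leq n-4$ is exactly what forces $n-l-2\geq 2$. Its one-skeleton is $K_n$ with precisely the edges $\{s_i,t_i\}$ removed, and every $2$-cell is a $2$-simplex, so this $\Lambda$ meets the hypotheses of Theorem~\ref{thm:l2cellulationghs}.

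Once concentration in degree $2$ is known, $L_q^2b_2(\Sigma_L)=\chi_q(\Sigma_L)$, and I would compute the Euler characteristic from Proposition~\ref{prop:eulerchar} and Theorem~\ref{thm:growthseriesformula}: $\chi_q(\Sigma_L)=1/W(q)=\sum_{T\in\mathcal{S}}(-1)^{|T|}/W_T(q^{-1})$, where $\mathcal{S}$ consists of $\emptyset$, the $n$ singletons, and the $\binom{n}{2}-l$ edges of $K_n^l(3)$ (no three generators span a spherical subgroup, a triangle with all labels $3$ generating the infinite group $\widetilde{A}_2$). Using $1/W_{\{s\}}(q^{-1})=q/(1+q)$ and $1/W_{I_2(3)}(q^{-1})=q^3/(1+2q+2q^2+q^3)$, since $W_{I_2(3)}(t)=1+2t+2t^2+t^3$, this yields
\[
L_q^2b_2(\Sigma_L)=1-\frac{nq}{1+q}+\Bigl(\binom{n}{2}-l\Bigr)\frac{q^3}{1+2q+2q^2+q^3},
\]
which is the asserted formula after writing $\binom{n}{2}-l=\frac{n(n-1)}{2}-l$. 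Equivalently one may read it straight off the display in Theorem~\ref{thm:l2cellulationghs}, since only the $m_{st}=3$ terms survive and there are $\binom{n}{2}-l$ of them.

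The step I expect to be the real obstacle is the realization of $K_n^l(3)$ itself: for an arbitrary choice of the $l$ deleted edges these need not span disjoint vertex pairs, and even when they do the iterated-join model imposes mild arithmetic constraints. One then has to build a generalized homology sphere of dimension $\geq 2$ on the vertex set $S$ with one-skeleton $K_n^l(3)$ whose $2$-skeleton is simplicial — triangular $2$-cells are forced, since with all edge labels equal to $3$ a polygonal $2$-cell on more than three vertices would generate a non-Euclidean group — and it suffices to do this for some graph $K_n^l(3)$, the formula depending only on $n$ and $l$. Establishing the existence of such a cellulation when $l\leq n-4$, by combining join constructions, stackings, and suitably chosen higher cells (and handling a few small cases by hand), is the technical heart of the argument; the bound $l\leq n-4$ should be precisely what leaves enough room.
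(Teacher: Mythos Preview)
Your approach differs substantially from the paper's and has a real gap. The paper does not try to realize $K_n^l(3)$ as the $1$-skeleton of a generalized homology sphere at all. Instead it argues by induction on $n$ via Mayer--Vietoris: removing an edge $\{s,t\}$ from $K_n(3)$ expresses $K_n^1(3)$ as the union of the two full subgraphs on $S\setminus\{s\}$ and $S\setminus\{t\}$, each a copy of $K_{n-1}(3)$, intersecting in $K_{n-2}(3)$. The resulting sequence, together with Corollary~\ref{cor:l2kn3}, handles the base case $K_5^1(3)$; for general $K_n^l(3)$ one removes one edge to split into pieces of type $K_{n-1}^{l'}(3)$ and $K_{n-2}^{l''}(3)$ with $l',l''\leq n-5$, to which the inductive hypothesis applies regardless of which edges were deleted. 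Your Euler-characteristic computation is correct and is exactly what finishes the argument once concentration is known.

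The gap in your route is twofold. First, the iterated-join construction requires the deleted edges to form a matching, hence $2l\leq n$; this already fails for $(n,l)=(9,5)$, a valid instance of the corollary, where no matching of size $5$ exists on $9$ vertices. You acknowledge this but offer only a sketch (``join constructions, stackings, and suitably chosen higher cells'') for the remaining cases. Second, your fallback claim that ``it suffices to do this for some graph $K_n^l(3)$, the formula depending only on $n$ and $l$'' is circular: distinct choices of deleted edges can give nonisomorphic Coxeter systems, and the corollary asserts concentration for each of them. That $\chi_q(\Sigma_L)$ depends only on $(n,l)$ is of no help, since concentration in degree $2$ is precisely what must be proved before one may read $L_q^2 b_2$ off from $\chi_q$. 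The paper's Mayer--Vietoris induction sidesteps the cellulation problem entirely and treats every pattern of edge deletions uniformly; your approach, even where the join model applies, covers only the matching case.
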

\begin{proof}
We first note that removing an edge from $K_n(3)$ splits the graph into two copies of $K_{n-1}(3)$ intersecting at $K_{n-2}(3)$. Since $n\geq 5$ and $q\geq 1$ we have the following Mayer--Vietoris sequence:

\[\tag{$\star$}\begin{tikzcd}[column sep=tiny]
\cdot\cdot\cdot \arrow{r} & L_q^2H_1(\Sigma_{K_{n-2}}) \arrow{r} & L_q^2H_1(\Sigma_{K_{n-1}})\oplus L_q^2H_1(\Sigma_{K_{n-1}})\arrow{r} & L_q^2H_1(\Sigma_{K_n^1}) \arrow{r} & 0
\end{tikzcd}\]

We first handle the case where $L=K_5^1(3)$. Removing an edge from $K_5(3)$ splits the graph into two copies of $K_{4}(3)$ intersecting at $K_{3}(3)$. Corollary \ref{cor:l2kn3} computes the $L_q^2$-(co)homology of each of the pieces in this decomposition and applying the sequence ($\star$) now proves the assertion for the case $L=K_5^1(3)$.

The proof for $L=K_n^{l}(3)$ is now by induction, the above computation serving as the base case. Suppose that the theorem is true for $m<n$. Begin by removing an edge from $K_n(3)$, splitting it as two copies of $K_{n-1}(3)$ intersecting at $K_{n-2}(3)$. Now, we remove the remaining $l-1\leq n-5$ edges from each of the graphs in the splitting, the worst case scenario being that we remove $l-1$ edges from $K_{n-2}(3)$ (which in turn removes $l-1$ edges from each copy of $K_{n-1}(3)$). Nevertheless, the inductive hypothesis is satisfied for each of the $K_{n-1}$'s in the splitting no matter how the remaining edges are removed. Applying a Mayer--Vietoris sequence analogous to ($\star$) now shows that the theorem holds for $L=K_n^{l}(3)$.
\end{proof}

With the help of some special subcomplexes of $\Sigma_L$ defined in \cite[Section 6]{DDJO}, we are also able to make computations when we change some labels on $L=K_n(3)$.

As before, $\Sigma_{cc}$ is $\Sigma_L$ with the Coxeter cellulation. Let $(W,S)$ be a Coxeter system and for $U\subset S$, set $\mathcal{S}(U):=\{T\in\mathcal{S}\mid T\subset U\}$. Define $\Sigma(U)$ to be the subcomplex of $\Sigma_{cc}$ consisting of all (closed) Coxeter cells of type $T$ with $T\in\mathcal{S}(U)$. Given $T\in\mathcal{S}(U)$, we define the following subcomplexes of $\Sigma(U)$:

\begin{align*}
\Omega_{UT}: & \hskip2mm \text{the union of closed cells of type }T', \text{ with }T'\in\mathcal{S}(U)_{\geq T},\\
\partial\Omega_{UT}: & \hskip2mm \text{the cells of }\Omega(U,T)\text{ of type } T'', \text{ with } T''\not\in\mathcal{S}(U)_{\geq T}.
\end{align*}

The pair $(\Omega_{UT},\partial\Omega_{UT})$ is the \emph{$(U,T)$-ruin}. For brevity, we write $(\Omega_{UT},\partial)$. For $s\in T$, set $U'=U-s$ and $T'=T-s$. As in \cite[Proof of Theorem 8.3]{DDJO}, we have the following weak exact sequence

\[\tag{$\dagger$}\begin{tikzcd}[column sep=small]
\cdot\cdot\cdot \arrow{r} & L_\Q^2H_{\ast}(\Omega_{U'T'},\partial) \arrow{r} & L_\Q^2H_{\ast}(\Omega_{UT'},\partial) \arrow{r} & L_\Q^2H_{\ast}(\Omega_{UT},\partial) \arrow{r} &\cdot\cdot\cdot
\end{tikzcd}\]

For the special case $U=S$ and $T=\{s\}$, the sequence becomes:

\[\tag{$\dagger\dagger$}\begin{tikzcd}[column sep=small]
\cdot\cdot\cdot \arrow{r} & L_\Q^2H_{\ast}(\Sigma(S-s)) \arrow{r} & L_\Q^2H_{\ast}(\Sigma(S)) \arrow{r} & L_\Q^2H_{\ast}(\Omega_{S\{s\}},\partial) \arrow{r} &\cdot\cdot\cdot
\end{tikzcd}\]

\begin{theorem}
Let $L=K_n$, the complete graph on $n$ vertices with $n\geq 5$. Let $k\leq n-4$, and suppose that we label $k$ edges of $L$ with $m_{st}\in\mathbb{N}-\{1,3\}$ and label the remaining edges by $3$. Then $L_\Q^2b_\ast(\Sigma_L)$ is concentrated in degree $2$.
\end{theorem}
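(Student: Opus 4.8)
The plan is to peel off the non-standard edges one at a time by a Mayer--Vietoris argument on the subcomplexes $\Sigma(U)$, reducing in each step to a $K_{n-1}$ with one fewer modified edge. Two reductions come first. Since $L=K_n$ is one-dimensional, $\Sigma_L$ is two-dimensional, so $L_\Q^2b_k(\Sigma_L)=0$ for $k\geq 3$ automatically; and since $W$ is infinite and $\Q\geq\mathbf 1$ we have $\Q\notin\mathcal{R}$, so Proposition~\ref{prop:betti0} gives $L_\Q^2b_0(\Sigma_L)=0$. Thus the theorem is equivalent to $L_\Q^2b_1(\Sigma_L)=0$, which I would prove by induction on the number $k$ of edges of $L$ labelled $\neq 3$; the base case $k=0$ is $L=K_n(3)$, covered by Corollary~\ref{cor:l2kn3}.

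For the inductive step, fix a modified edge $\{s,t\}$ and work in $\Sigma_{cc}=\Sigma(S)$. Because $L=K_n$ is the nerve, no three-element subset of $S$ is spherical, so the only Coxeter cells whose type contains both $s$ and $t$ are the $\{s,t\}$-cells; each is a $2$-disk whose boundary $2m_{st}$-gon is a union of $\{s\}$-cells and $\{t\}$-cells, hence lies in $\Sigma(S-s)\cup\Sigma(S-t)$. Therefore $\Sigma(S)$ is obtained from the closed subcomplex $\Sigma(S-s)\cup\Sigma(S-t)$ by attaching these $2$-cells along their boundary circles, and the relative term of the long exact sequence of the pair $\bigl(\Sigma(S),\Sigma(S-s)\cup\Sigma(S-t)\bigr)$ is, by excision, concentrated in degree $2$ (it is an induced copy of the relative $L_\Q^2$-homology of a $2$-disk rel its boundary). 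The sequence then gives a weakly exact $L_\Q^2H_1(\Sigma(S-s)\cup\Sigma(S-t))\to L_\Q^2H_1(\Sigma(S))\to 0$, so that
$$L_\Q^2b_1(\Sigma_L)\ \leq\ L_\Q^2b_1\bigl(\Sigma(S-s)\cup\Sigma(S-t)\bigr).$$

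Next I would apply Mayer--Vietoris to the two pieces, using $\Sigma(S-s)\cap\Sigma(S-t)=\Sigma(S-\{s,t\})$. As $W_{S-\{s,t\}}$ is infinite (generated by $n-2\geq 3$ elements with finite Coxeter relations) and $\Q\geq\mathbf 1$, Proposition~\ref{prop:betti0} gives $L_\Q^2b_0(\Sigma(S-\{s,t\}))=0$, whence
$$L_\Q^2b_1\bigl(\Sigma(S-s)\cup\Sigma(S-t)\bigr)\ \leq\ L_\Q^2b_1(\Sigma(S-s))+L_\Q^2b_1(\Sigma(S-t)).$$
As in Corollary~\ref{cor:l2boundarycomponents} and the remark following it, $L_\Q^2b_\ast(\Sigma(S-s))$ computed over $W$ agrees with $L_\Q^2b_\ast(\Sigma_{L_{S-s}})$ computed over $W_{S-s}$, and likewise for $t$. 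Now $L_{S-s}=K_{n-1}$ carries at most $k-1$ modified edges (the edge $\{s,t\}$ has been removed), and $k-1\leq(n-1)-4$ with $n-1\geq 4$; so either $n-1\geq 5$ and the inductive hypothesis (strictly fewer modified edges) gives $L_\Q^2b_1(\Sigma_{L_{S-s}})=0$, or $n-1=4$, which forces no modified edges and $L_{S-s}=K_4(3)$, handled by Corollary~\ref{cor:l2kn3}. Either way $L_\Q^2b_1(\Sigma_{L_{S-s}})=0$, and symmetrically $L_\Q^2b_1(\Sigma_{L_{S-t}})=0$; combining the three displayed inequalities yields $L_\Q^2b_1(\Sigma_L)=0$.

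The step I expect to cost the most care is the cellular analysis of the second paragraph: checking that $\{s,t\}$ is the unique spherical subset of $S$ containing it --- precisely where the hypothesis that the remaining labels equal $3$ (so that $L$ is genuinely a graph, and $\Sigma_L$ genuinely two-dimensional) is used --- and that the attaching maps of the $\{s,t\}$-cells land in $\Sigma(S-s)\cup\Sigma(S-t)$, so that the relative term of the pair is concentrated in degree $2$. Alternatively one could route the inductive step through the weak exact sequence $(\dagger\dagger)$ for a vertex of the modified edge, but then one would instead have to control $L_\Q^2H_1(\Omega_{S\{s\}},\partial)$ directly, which the present subcomplex argument sidesteps. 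Once concentration in degree $2$ is known, the value of $L_\Q^2b_2(\Sigma_L)$ follows from $\chi_\Q(\Sigma_L)=1/W(\Q)$ (Proposition~\ref{prop:eulerchar}) via Theorem~\ref{thm:growthseriesformula}.
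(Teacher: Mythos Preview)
Your argument is correct and reaches the same conclusion, but the route differs from the paper's in how the inductive step is organised. Both proofs reduce to $L_\Q^2b_1(\Sigma_L)=0$ and peel off one modified edge $\{s,t\}$ at a time (you induct on $k$, the paper on $n$). The paper runs the ruin sequences $(\dagger\dagger)$ and $(\dagger)$: first $(\dagger\dagger)$ reduces to controlling $L_\Q^2H_1(\Omega_{S\{s\}},\partial)$, then $(\dagger)$ with $T=\{s,t\}$ splits this into $\Omega_{S\{s,t\}}$ (handled by \cite[Lemma~8.1]{DDJO}, i.e.\ the relative complex has no $1$-cells) and $\Omega_{(S-t)\{s\}}$, and finally another instance of $(\dagger\dagger)$ reduces the latter to $L_\Q^2b_1(\Sigma(S-t))$ and $L_\Q^2b_0(\Sigma(S-\{s,t\}))$. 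Your pair $(\Sigma(S),\Sigma(S-s)\cup\Sigma(S-t))$ has relative term which, by excision, \emph{is} $(\Omega_{S\{s,t\}},\partial)$, and your Mayer--Vietoris on $\Sigma(S-s)\cup\Sigma(S-t)$ with intersection $\Sigma(S-\{s,t\})$ plays the role of the paper's second pass through $(\dagger\dagger)$. So your argument is the paper's two exact sequences telescoped into one pair plus one Mayer--Vietoris; it is more self-contained and sidesteps the ruin formalism entirely, at the cost of re-deriving by hand the cellular observation that the paper imports from \cite{DDJO}. You even anticipate the paper's approach in your closing remark about routing through $(\dagger\dagger)$.
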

\begin{proof}
The proof is by induction on $n$. First consider the case where $L=K_5$ with one label $m_{st}\in\mathbb{N}-\{1,3\}$. Then by Corollary \ref{cor:l2kn3}, $L^2b_1(\Sigma(S-s))=L^2b_1(\Sigma_{K_4(3)})=0$. According to sequence ($\dagger\dagger$), it remains to show that $L_\Q^2H_1(\Omega_{S\{s\}},\partial)=0$. We turn our attention to sequence ($\dagger$) with $U=S$, $T=\{s,t\}$, $U'=S-t$, and $T'=\{s\}$. By \cite[Lemma 8.1]{DDJO} $L_\Q^2H_1(\Omega_{ST},\partial)=0$, the point being that the relative chain complex of $(\Omega_{ST},\partial \Omega_{ST})$ has no one-dimensional cells. So, by weak exactness, it remains to show that $L_\Q^2H_1(\Omega_{U'T'},\partial)=0$. We consider the following version of sequence $(\dagger\dagger)$:

$$\begin{tikzcd}[column sep=small]
\cdot\cdot\cdot \arrow{r} & L_\Q^2H_1(\Sigma(S-\{s,t\})) \arrow{r} & L_\Q^2H_1(\Sigma(S-t)) \arrow{r} & L_\Q^2H_1(\Omega_{U'T'},\partial) \arrow{r} &\cdot\cdot\cdot
\end{tikzcd}$$

Note that $$L_\Q^2b_0(\Sigma(S-\{s,t\}))=L_\Q^2b_0(\Sigma_{K_3(3)})=0$$ and $$L_\Q^2b_1(\Sigma(S-t))=L_\Q^2b_1(\Sigma_{K_4(3)})=0$$ by Theorem \ref{thm:weightedeuclidean} and Corollary \ref{cor:l2kn3}, respectively. By weak exactness, $L_\Q^2H_1(\Omega_{U'T'},\partial)=0$, and hence $L_\Q^2H_1(\Omega_{S\{s\}},\partial)=0$, thus proving the assertion for $L=K_5$.

Now, suppose that the theorem is true for $L=K_m$, $m<n$. We wish to show the theorem is true for $L=K_n$. Begin by choosing an edge $e$ with vertices $s$ and $t$ and label different from $3$. We now observe that $L^2b_1(\Sigma(S-s))=L^2b_1(\Sigma_{K_{n-1}})=0$ by the inductive hypothesis, since $K_{n-1}$ now has at most $n-5$ edges with a label different from $3$. Similarly, the inductive hypothesis implies $L_\Q^2b_1(\Sigma(S-t))=0$ and $L_\Q^2b_0(\Sigma(S-\{s,t\}))=0$. Hence the weak exact sequences used in the proof for the case $L=K_5$ allow us to conclude that $L_\Q^2b_1(\Sigma_L)=L_\Q^2b_1(\Sigma(S))=0$.
\end{proof}

\begin{remark}
Note that in conjunction with Theorem \ref{thm:weightedeuclidean} and Corollary \ref{cor:l2cellulationghs2}, the above argument gives an alternate proof of Corollary \ref{cor:l2kn3}.
\end{remark}

\subsection{The Weighted Singer Conjecture.}
Appearing in \cite{DDJO}, the following is the appropriate formulation of the the Singer Conjecture for Coxeter groups \cite{DO} for weighted $L^2$-(co)homology:

\begin{conjecture}[Weighted Singer Conjecture]
\label{conj:Singer}
Suppose that the nerve $L$ is a triangulation of $S^{n-1}$. Then $$L_\Q^2H_k(\Sigma_L)=0 \text{ for } k>\frac{n}{2} \text{ and } \Q\leq \mathbf{1}.$$
\end{conjecture}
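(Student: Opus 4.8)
Since this is a conjecture rather than a theorem, the goal of a ``proof'' is really a strategy that settles it in accessible cases and isolates the genuine obstruction. The plan is to exploit that, when $L$ triangulates $S^{n-1}$, the complex $\Sigma_L$ is a closed homology $n$-manifold, so weighted Poincar\'{e} duality gives $L_\Q^2H_k(\Sigma_L)\cong L_{\Q^{-1}}^2H_{n-k}(\Sigma_L)$; hence the asserted vanishing for $k>n/2$ and $\Q\le\mathbf 1$ is equivalent to $L_\Q^2H_k(\Sigma_L)=0$ for $k<n/2$ and $\Q\ge\mathbf 1$. I would attack this reformulated statement by induction on $n$, with the cases $n\le 4$ supplied by the already-established computations (Theorem \ref{thm:l2cellulationghs} and the three-manifold input behind Lemma \ref{lemma:l2spherecellulations}).

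For the inductive step I would pass to the fattened construction: pick a $GHS^{N-1}$ on the vertex set $S$ containing $L$ with $N-1>\dim L$, form $\fchamber$ and $\fdavis_L$, so that $\fdavis_L$ is a homology $N$-manifold with boundary, $W$-equivariantly equivalent to $\Sigma_L$ (Propositions \ref{prop:defretract}, \ref{prop:fdavismnfld}). Running the long exact sequence of the pair $(\fdavis_L,\partial\fdavis_L)$ against weighted Poincar\'{e} duality $L_\Q^2H_k(\fdavis_L,\partial\fdavis_L)\cong L_{\Q^{-1}}^2H_{N-k}(\Sigma_L)$ reduces the vanishing of $L_\Q^2H_k(\Sigma_L)$ in the range $k<n/2$ to two inputs: (a) $L_\Q^2H_k(\partial\fdavis_L)=0$, and (b) $L_{\Q^{-1}}^2H_{N-k}(\Sigma_L)=0$. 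Input (b) is exactly where one invokes a $\vcd$ bound (Corollary \ref{cor:vanishingabovevcd}) together with Lemma \ref{lemma:pushingupcycles}, which transports a known $\Q=\mathbf 1$ vanishing down to $\Q^{-1}\ge\mathbf 1$ once $\vcd W$ is under control. For input (a) I would feed the cover $\{\mathcal U(W,C_T)\}_{T\in\mathcal N_P}$ into the boundary spectral sequence of Proposition \ref{prop:specialspectralsequence}: by Corollary \ref{cor:l2boundarycomponents} its $E_1$-page is built from the groups $L_\Q^2 b_\ast(\Sigma_{L_T})$ with $T$ non-spherical and $L_T$ a full subcomplex of the sphere $L$ of strictly smaller complexity; whenever those subgroups $W_T$ fall into understood families (Euclidean, quasi-L\'{a}nner, or of suitably small virtual cohomological dimension, as in the hypotheses of Theorem \ref{thm:genl2kn}), the inductive hypothesis and the known computations annihilate the relevant $E_1$-entries, giving $L_\Q^2H_k(\partial\fdavis_L)=0$.

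The step I expect to be the real obstacle is input (a) in full generality. For an arbitrary triangulation of $S^{n-1}$ the full subcomplexes $L_T$ arising as nerves of the infinite special subgroups $W_T$ are essentially unrestricted, and there is currently no vanishing theorem for $L_\Q^2H_\ast(\Sigma_{L_T})$ valid for all such $W_T$; likewise, feeding Lemma \ref{lemma:pushingupcycles} requires knowing that the top $L_\mathbf 1^2$-Betti number of $W$ already vanishes, which for a closed aspherical manifold is itself an instance of the classical Singer conjecture. Consequently the argument is unconditional only for $n$ small enough that these subgroups are forced into the well-behaved families, and conditional otherwise -- which is precisely the form of the Singer-type theorems proved below.
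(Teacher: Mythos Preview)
Your proposal correctly recognises that Conjecture \ref{conj:Singer} is not proved in the paper, and the strategy you outline---dualising via weighted Poincar\'{e} duality, fattening $\Sigma_L$ to $\fdavis_L$, running the long exact sequence of the pair together with the boundary spectral sequence of Proposition \ref{prop:specialspectralsequence}, and closing the loop with Lemma \ref{lemma:pushingupcycles}---is exactly the program the paper deploys to obtain its partial results (Theorems \ref{thm:singerfordisks}, \ref{thm:singerghs}, Corollary \ref{cor:kns3}); you also identify precisely the two genuine obstructions the paper isolates, namely unrestricted special subgroups on the boundary side and the dependence of Lemma \ref{lemma:pushingupcycles} on the classical Singer conjecture at $\Q=\mathbf{1}$.

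One small correction: Theorem \ref{thm:l2cellulationghs} and Lemma \ref{lemma:l2spherecellulations} do not supply the base cases $n\le 4$ of the conjecture itself, since those results concern nerves that are \emph{graphs} (so $\dim\Sigma_L=2$) rather than triangulations of $S^{n-1}$; the paper only knows the weighted conjecture for $n\le 4$ in the right-angled case (from \cite{DDJO}), and your inductive scheme would need separate input there.
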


By weighted Poincar\'{e} duality, this is equivalent to the conjecture that if $\Q\geq \mathbf{1}$ and $k<\frac{n}{2}$, then $L_\Q^2H_k(\Sigma)$ vanishes. The conjecture is known for elementary reasons for $n\leq 2$, and in \cite{DDJO}, it is proved for the case where $W$ is right-angled and $n\leq 4$. Furthermore, it was shown in in \cite{DDJO} that Conjecture \ref{conj:Singer} for $n$ odd implies Conjecture \ref{conj:Singer} for $n$ even, under the assumption that $W$ is right-angled.

The original Singer conjecture for Coxeter groups was formulated for $\Q=\mathbf{1}$ in \cite{DO} and concluded that the $L^2$-(co)homology is concentrated in dimension $\frac{n}{2}$. The original conjecture is known for elementary reasons for $n\leq 2$ and holds by a result of Lott and L\"{u}ck \cite{LottLuck}, in conjunction with the validity of the Geometrization Conjecture for $3$-manifolds \cite{Perelman}, for $n=3$. It was proved by Davis--Okun \cite{DO} for the case where $W$ is right-angled and $n\leq 4$. It was later proved for the case where $W$ is an even Coxeter group and $n\leq 4$ by Schroeder \cite{Schroeder}, under the assumption that the nerve $L$ is a flag complex. Due to recent work of Okun--Schreve \cite[Theorem 4.9]{OS}, the conjecture is now known in full generality whenever $\Q=\mathbf{1}$ and $n\leq 4$. In fact, using induction and \cite[Theorem 4.5, Lemma 4.6, Corollary 4.7]{OS} proves the following theorem.

\begin{theorem}
\label{thm:singerq=1}
Suppose that the nerve $L$ is an $(n-1)$-sphere or an $(n-1)$-disk. Then $$L_\mathbf{1}^2H_k(\Sigma_L)=0 \text{ for } k\geq n-1.$$
\end{theorem}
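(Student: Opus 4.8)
The plan is to prove both assertions --- for $(n-1)$-spheres and for $(n-1)$-disks --- together, by induction on $n$, treating the disk case first in each dimension. I would take $n\le 4$ as the base: there $\Sigma_L$ is an aspherical $n$-manifold (with boundary in the disk case), and vanishing for $k\ge n-1$ is weaker than concentration in degree $n/2$, so it follows from the unweighted Singer Conjecture, known in this range by Lott--L\"uck \cite{LottLuck} together with geometrization \cite{Perelman} for $n=3$ and by Okun--Schreve \cite[Theorem 4.9]{OS} in general. So assume henceforth $n\ge 5$.

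The algebraic device I would use is a Mayer--Vietoris sequence attached to an amalgam splitting of the nerve at a vertex. For a vertex $v$ of $L$ set $L_1=\overline{\mathrm{St}(v,L)}=v\ast\mathrm{Lk}(v,L)$, $L_2=\mathrm{ast}(v,L)$ (the full subcomplex on the vertices $\ne v$), and $L_0=L_1\cap L_2=\mathrm{Lk}(v,L)$; flagness of $L$ forces that no edge joins $v$ to a vertex outside $\mathrm{Lk}(v,L)$, so $W=W_{L_1}\ast_{W_{L_0}}W_{L_2}$, and the resulting tree-of-spaces structure on $\Sigma_L$ over the Bass--Serre tree gives a long exact sequence a relevant stretch of which reads
\[L_\mathbf{1}^2H_k(\Sigma_{L_0})\to \bigoplus_{i=1,2}L_\mathbf{1}^2H_k(\Sigma_{L_i})\to L_\mathbf{1}^2H_k(\Sigma_L)\to L_\mathbf{1}^2H_{k-1}(\Sigma_{L_0}),\]
in which, as in the proof of Corollary \ref{cor:l2boundarycomponents}, the $L_i$-terms may be computed with respect to the special subgroups $W_{L_i}$. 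If $L\cong S^{n-1}$ then $L_0\cong S^{n-2}$ and $L_1\cong L_2\cong D^{n-1}$; if $L\cong D^{n-1}$ and $v\in\partial L$ then $L_0\cong D^{n-2}$, $L_1=v\ast\mathrm{Lk}(v,L)\cong D^{n-1}$ is a cone, and $L_2\cong D^{n-1}$ has strictly fewer vertices than $L$ unless $L$ is itself a cone.

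For the disk case I would induct secondarily on the number of vertices of $L$. If $L=v_0\ast M$ is a cone, then $M$ is an $(n-2)$-disk or $(n-2)$-sphere, the outer hypothesis gives $L_\mathbf{1}^2H_k(\Sigma_M)=0$ for $k\ge n-2$, and the coning estimate of Okun--Schreve \cite[Lemma 4.6, Corollary 4.7]{OS} --- which bounds the top nonvanishing degree of $L_\mathbf{1}^2H_\ast(\Sigma_{v_0\ast M})$ in terms of that of $\Sigma_M$ --- upgrades this to vanishing for $k\ge n-1$. If $L$ is not a cone, pick $v\in\partial L$; then $v$ is not a cone point, so $L_1,L_2$ are $(n-1)$-disks with strictly fewer vertices than $L$ while $L_0\cong D^{n-2}$, and feeding the sequence with the inner hypothesis for $L_1,L_2$ and the outer hypothesis for $L_0$ (which gives vanishing once $k-1\ge n-2$) yields $L_\mathbf{1}^2H_k(\Sigma_L)=0$ for $k\ge n-1$. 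The sphere case is then immediate: for $k\ge n-1$ the terms $L_\mathbf{1}^2H_k(\Sigma_{L_i})$ vanish by the disk case just proved, and $L_\mathbf{1}^2H_{k-1}(\Sigma_{L_0})$ vanishes by the outer hypothesis for the sphere $L_0\cong S^{n-2}$.

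The main obstacle will be making the bookkeeping close at $k=n-1$ rather than only at $k=n$; this succeeds exactly because the edge term $\Sigma_{L_0}$ enters one homological degree lower while $L_0$ has one less topological dimension, so the inductive vanishing range $[n-2,\infty)$ for $L_0$ meets the shifted index $k-1$ precisely at $k=n-1$. The second genuinely hard point is cone nerves, where the splitting degenerates ($L_1=L$) and one must invoke \cite{OS} for honest control of how coning affects the weighted $L^2$-cohomology; I would also need to check that $\overline{\mathrm{St}(v,L)}$ and $\mathrm{ast}(v,L)$ are again spheres or disks (or, more robustly, generalized homology spheres and disks, the natural category for Propositions \ref{prop:basicconstrmnfld} and \ref{prop:basicconstrmnfldbdry}), and that the recursion never reaches $n=4$ on the disk side or $n=3$ on the sphere side, which is automatic since each reduction strictly lowers the dimension or the vertex count. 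For the sphere case there is also the alternative of reducing, via weighted Poincar\'e duality and Lemma \ref{lemma:h1bdry}, to $L_\mathbf{1}^2b_1(\partial\fdavis_L)=0$, and thence --- via Proposition \ref{prop:specialspectralsequence} and Corollary \ref{cor:l2boundarycomponents} --- to vanishing of $L_\mathbf{1}^2b_1(\Sigma_{L_T})$ over the proper full subcomplexes $L_T\subset L$; but controlling this last still requires an input of the same nature as the coning lemma above.
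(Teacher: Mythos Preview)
Your plan is exactly what the paper intends: its entire argument for this theorem is the single clause ``using induction and \cite[Theorem 4.5, Lemma 4.6, Corollary 4.7]{OS},'' and your star/antistar Mayer--Vietoris together with the Okun--Schreve coning estimate is a direct unpacking of that induction (\cite[Theorem 4.5]{OS} is essentially the amalgam step you set up by hand, and \cite[Lemma 4.6, Corollary 4.7]{OS} is your cone case). The one point that needs tightening is the disk base case. The closed-manifold Singer statement does not apply to $\Sigma_L$ when $L$ is a disk, and in fact for $n=2$ the assertion is simply false---a path nerve with two edges labelled $\ge 3$ has $\chi_{\mathbf 1}<0$, hence $L^2_{\mathbf 1}b_1>0$---so ``vanishing for $k\ge n-1$ is weaker than concentration in degree $n/2$'' breaks at $n=2$. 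Take $n\in\{3,4\}$ as the genuine base and handle the disk there either by doubling $L$ to an $(n-1)$-sphere and feeding your own Mayer--Vietoris with Singer for $S^{n-1}$ and $S^{n-2}$, or by quoting the disk statement already contained in \cite{OS}. Your flagged check that $\mathrm{ast}(v,L)$ is again a PL $(n-1)$-disk is standard PL topology (one removes a ball meeting $\partial L$ in a boundary face) and requires no Poincar\'e-type input.
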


Note that if $L$ is a triangulation of the $(n-1)$-disk, then $\Sigma_L$ is an $n$-manifold with boundary. We now obtain the following theorem, which whenever $n=3,4$ can be thought of as a version of Conjecture \ref{conj:Singer} for the case where $\Sigma_L$ is an $n$-manifold with boundary.

\begin{theorem}
\label{thm:singerfordisks}
Suppose that the nerve $L$ is an $(n-1)$-disk. Then $$L_\Q^2H_k(\Sigma_L)=0 \text{ for } k\geq n-1 \text{ and } \Q\leq \mathbf{1}.$$
\end{theorem}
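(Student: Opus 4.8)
The plan is to deduce this from the $\Q=\mathbf{1}$ case (Theorem \ref{thm:singerq=1}, due to Okun--Schreve) together with the ``pushing up cycles'' Lemma \ref{lemma:pushingupcycles}, after locating the correct degree by bounding $\vcd W$.

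\textbf{Step 1: bound the dimension.} Since $L$ is a triangulation of the $(n-1)$-disk, $\Sigma_L$ is a contractible $n$-manifold with boundary, and its boundary is nonempty because $\partial L\neq\emptyset$. I would record that this forces $\vcd W\leq n-1$: one may cite the computation of $\vcd W$ from the topology of the nerve in \cite[Corollary 8.5.5]{Davis}, or argue directly that a proper cocompact $W$-action on the contractible $n$-manifold-with-nonempty-boundary $\Sigma_L$ gives $H^n_c(\Sigma_L)\cong H_0(\Sigma_L,\partial\Sigma_L)=0$ by Poincar\'e--Lefschetz duality, whence $W$ has no cohomology in degree $n$.

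\textbf{Step 2: split on $\vcd W$.} Put $m=\vcd W$; by Step 1, $m\leq n-1$. If $m\leq n-2$, then every degree $k\geq n-1$ exceeds $\vcd W$, so Corollary \ref{cor:vanishingabovevcd} already gives $L_\Q^2b_k(W)=0$ for all $\Q$ --- in particular for $\Q\leq\mathbf{1}$ --- and there is nothing more to prove. If $m=n-1$, then Theorem \ref{thm:singerq=1}, applied to the disk $L$, gives $L_\mathbf{1}^2H_{n-1}(\Sigma_L)=0$, that is $L_\mathbf{1}^2b_{\vcd W}(W)=0$; this is exactly the hypothesis of Lemma \ref{lemma:pushingupcycles}, which therefore yields $L_\Q^2b_k(W)=0$ for $k\geq n-1$ and $\Q\leq\mathbf{1}$. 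In either case, passing from the vanishing of $L_\Q^2b_k$ to that of the reduced group $L_\Q^2H_k(\Sigma_L)$ is immediate, since the von Neumann dimension is faithful on Hilbert $\mathcal{N}_\Q$-modules.

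There is no real obstacle in the body of the argument: the substantive input --- vanishing at $\Q=\mathbf{1}$ in the top degrees for disk nerves --- is imported as Theorem \ref{thm:singerq=1}, and Lemma \ref{lemma:pushingupcycles} is precisely the device that transports such a statement from the parameter $\mathbf{1}$ down to $\Q\leq\mathbf{1}$ in top degree. The one point that must not be skipped is Step 1, the inequality $\vcd W\leq n-1$: without it one could not be sure that Lemma \ref{lemma:pushingupcycles} is being invoked in degree $\vcd W$ rather than above it, and the degree $k=n-1$ would be left untreated.
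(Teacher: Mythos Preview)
Your proof is correct and follows essentially the same approach as the paper: cite \cite[Corollary 8.5.5]{Davis} for $\vcd W\leq n-1$, invoke Theorem \ref{thm:singerq=1} for vanishing at $\Q=\mathbf{1}$, and apply Lemma \ref{lemma:pushingupcycles}. Your explicit case split on whether $\vcd W$ equals $n-1$ or is strictly smaller is a harmless elaboration of what the paper compresses into a single appeal to Lemma \ref{lemma:pushingupcycles}.
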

\begin{proof}
By Theorem \ref{thm:singerq=1}, we have that $L_\mathbf{1}^2H_k(\Sigma_L)=0$ for $k\geq n-1$. Furthermore, \cite[Corollary 8.5.5]{Davis} implies that $\vcd W\leq n-1$, and hence we are done by Lemma \ref{lemma:pushingupcycles}.
\end{proof}

We note that Theorem \ref{thm:l2cellulationghs} provides convincing evidence for the validity of a weighted version of Theorem \ref{thm:singerq=1} when $L$ is a triangulation of the $(n-1)$-sphere. Suppose that the labeled nerve $L'$ is the one-skeleton of a cellulation of a $GHS^{n-1}$, $n\geq 3$, where all $2$-cells are Euclidean. Build $L'$ to a triangulation that is a $GHS^{n-1}$ by coning on each empty cell and labeling new edges by $2$. In other words, perform the following sequence of right-angled cones. First begin by coning on each empty $2$-cell, then on each empty $3$-cell, and so on, until each empty cell has been coned off. (if $n=3$, this process stops when each empty $2$-cell has been coned off).

\begin{theorem}
\label{thm:singerghs}
Suppose that the nerve $L$ a $GHS^{n-1}$, $n\geq 3$, obtained via the above construction and suppose that $\Q\geq \mathbf{1}$. Then $$L_\Q^2b_k(\Sigma_{L})=0 \text{ for } k\leq 1.$$
\end{theorem}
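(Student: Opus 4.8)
The plan is to prove the statement by induction on $n$, propagating the vanishing of $L_\Q^2b_0$ and $L_\Q^2b_1$ along the prescribed sequence of right-angled cones by a Mayer--Vietoris argument of exactly the kind used in the proof of Lemma~\ref{lemma:l2spherecellulations}. The construction starts from the one-skeleton $L'$ of a cell complex $\Lambda'$ that is a $GHS^{n-1}$ with $n-1\geq 2$, all of whose $2$-cells are Euclidean. Hence Theorem~\ref{thm:l2cellulationghs}, applied in ``dimension $n-1$'', gives that $L_\Q^2b_\ast(\Sigma_{L'})$ is concentrated in degree $2$; in particular $L_\Q^2b_0(\Sigma_{L'})=L_\Q^2b_1(\Sigma_{L'})=0$. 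This is the base of the induction on the cones.

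For the inductive step, suppose the nerve $L_{\mathrm{old}}$ reached at some stage satisfies $L_\Q^2b_0(\Sigma_{L_{\mathrm{old}}})=L_\Q^2b_1(\Sigma_{L_{\mathrm{old}}})=0$ and consider coning off the next empty cell. Let $T$ denote the vertex set of the (already subdivided) boundary of that cell and $c$ the new cone point, so that $L_{\mathrm{new}}=L_{\mathrm{old}}\cup CL_T$ with $CL_T=c\ast L_T$, and $CL_T\cap L_{\mathrm{old}}=L_T$ because $L_T$ is the full subcomplex of $L_{\mathrm{old}}$ on $T$ and $c$ is a new vertex. As in the proof of Lemma~\ref{lemma:l2spherecellulations} this yields a Mayer--Vietoris sequence
$$\cdots\longrightarrow L_\Q^2H_j(\Sigma_{L_T})\longrightarrow L_\Q^2H_j(\Sigma_{L_{\mathrm{old}}})\oplus L_\Q^2H_j(\Sigma_{CL_T})\longrightarrow L_\Q^2H_j(\Sigma_{L_{\mathrm{new}}})\longrightarrow\cdots.$$
Since $CL_T$ is the right-angled cone on $L_T$ we have $\Sigma_{CL_T}=\Sigma_{L_T}\times[-1,1]$, so by the K\"{u}nneth formula $L_\Q^2b_0(\Sigma_{CL_T})=L_\Q^2b_1(\Sigma_{CL_T})=0$ once $L_\Q^2b_0(\Sigma_{L_T})=L_\Q^2b_1(\Sigma_{L_T})=0$. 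Granting this, reading the sequence in degrees $0$ and $1$ and using subadditivity of von Neumann dimension forces $L_\Q^2b_0(\Sigma_{L_{\mathrm{new}}})=L_\Q^2b_1(\Sigma_{L_{\mathrm{new}}})=0$; applying this along the whole sequence of cones yields $L_\Q^2b_k(\Sigma_L)=0$ for $k\leq 1$.

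Thus everything reduces to the claim $L_\Q^2b_0(\Sigma_{L_T})=L_\Q^2b_1(\Sigma_{L_T})=0$ for each subcomplex $L_T$ along which we cone. Because each cell being coned is empty, $W_T$ is infinite; since $W_T(\mathbf 1)=|W_T|=\infty$ and $\mathcal{R}$ is downward closed, $\Q\geq\mathbf 1$ gives $\Q\notin\mathcal{R}$, and hence $L_\Q^2b_0(\Sigma_{L_T})=0$ by Proposition~\ref{prop:betti0}. For the vanishing of $L_\Q^2b_1(\Sigma_{L_T})$ there are two cases. If the cell being coned is a $2$-cell, it is Euclidean, $W_T$ is a Euclidean reflection group, and $L_\Q^2b_1(\Sigma_{L_T})=0$ by Theorem~\ref{thm:weightedeuclidean}. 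If it is a $j$-cell with $3\leq j\leq n-1$, then at the moment it is coned $L_T$ is exactly the nerve obtained by running the construction of the theorem on the one-skeleton of the boundary of that $j$-cell: its boundary is a $GHS^{j-1}$ whose $2$-cells are $2$-cells of $\Lambda'$ and hence Euclidean, and because the cones are performed in order of increasing dimension, its empty faces of dimension $<j$ have already been coned off. Since $j-1<n-1$, the induction hypothesis applies and gives $L_\Q^2b_k(\Sigma_{L_T})=0$ for $k\leq 1$. (In the base case $n=3$ only empty $2$-cells arise, so the second case never occurs.)

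The step I expect to require the most care is the identification in the $j\geq 3$ case: one must check that coning the empty faces of a given $j$-cell inside the large complex agrees with doing so inside the boundary of that cell alone, that the full subcomplex on $T$ picks up no extra simplices beyond the subdivided boundary, and that the dimension-by-dimension ordering of the cones indeed leaves every relevant lower cell spherical by the time it is needed, so that at each stage the relevant subcomplex really is the one produced by the construction. The other point to nail down is the weighted Mayer--Vietoris sequence together with the exactness of $\mathcal{N}_\Q(W)\otimes_{\mathbb{R}_\Q(W_T)}(-)$, which is what makes the von Neumann dimensions of the pieces combine as claimed; this is the same mechanism already used in the proof of Lemma~\ref{lemma:l2spherecellulations}.
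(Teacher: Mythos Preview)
Your proof is correct and follows the same strategy as the paper: Mayer--Vietoris bookkeeping along the sequence of right-angled cones, with Theorem~\ref{thm:l2cellulationghs} supplying the vanishing for the starting one-skeleton $L'$. The paper's argument is only a brief sketch, and your explicit induction on $n$---identifying the coned subcomplex $L_T$ for a $j$-cell as itself an instance of the construction in lower dimension---is precisely the natural way to fill in what the paper leaves implicit; the care points you flag (that the full subcomplex on $T$ coincides with the subdivided boundary, and that lower cells have already been coned) are the right ones and are not addressed in the paper either.
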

\begin{proof}
The proof of the theorem follows the strategy of Lemma \ref{lemma:l2spherecellulations}: one performs careful book-keeping using Mayer--Vietoris sequences when constructing $L$ from $L'$. Theorem \ref{thm:l2cellulationghs} tells us that $L'$ originally satisfies $L_\Q^2b_1(\Sigma_{L'})=0$. To construct $L$ from $L'$, we first began by coning empty $2$-cells, then successively coning higher dimensional cells, labeling new edges by $2$. If at each step of this process we employ a Mayer--Vietoris sequence, then Theorem \ref{thm:l2cellulationghs}, in conjunction with the fact that right-angled cones will not develop new homology below dimension $2$, implies that $L_\Q^2b_1(\Sigma_{L})=0$.
\end{proof}

\subsection{Quasi-L\'{a}nner groups.} A $2$-spherical Coxeter group $W$ is \emph{quasi-L\'{a}nner} if it acts properly (but not cocompactly) on hyperbolic space $\mathbb{H}^n$ by reflections with fundamental chamber an $n$-simplex of finite volume. For brevity, we say that $W$ is of type $QL_n$. Quasi-L\'{a}nner groups have been classified and only exist in dimensions $3$ through $10$. For a complete list, see \cite[$\S$6.9]{Humphreys}. We note that the Coxeter group with corresponding nerve $L=K_4(3)$ is on the list.

All non-spherical proper special subgroups of a quasi-L\'{a}nner group are Euclidean and on the list appearing in \cite[pg. 34]{Humphreys}. Moreover, if $W$ is of type $QL_n$, then the only proper infinite special subgroups are those $W_T$ with $|T|=n-1$. Hence, by \cite[Corollary 8.5.5]{Davis}, if $W$ is of type $QL_n$, then $\vcd W=n-1$. With this observation, we prove the following theorem.

\begin{theorem}
\label{thm:l2QL}
Suppose that $W$ is of type $QL_n$. Then $L_\Q^2b_k(\Sigma_L)=0$ whenever $k\geq n-1$ and $\Q\leq\mathbf{1}$, or $k\leq 1$ and $\Q\geq\mathbf{1}$.
\end{theorem}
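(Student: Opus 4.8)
The plan is to prove the two vanishing ranges in the order \emph{first} $k\geq n-1$, $\Q\leq\mathbf 1$ and \emph{then} $k\leq 1$, $\Q\geq\mathbf 1$, the point being that the second range follows from the first together with weighted Poincar\'e duality on the fattened chamber. Throughout I use the two facts recorded just above the statement: $\vcd W=n-1$, and (since quasi-L\'anner groups exist only for $3\leq n\leq 10$) $n\geq 3$. For the range $k\geq n-1$, $\Q\leq\mathbf 1$: degrees $k>n-1$ vanish for every $\Q$ by Corollary \ref{cor:vanishingabovevcd}, so only $k=n-1$ remains, and by Lemma \ref{lemma:pushingupcycles} (applied with $\vcd W=n-1$) it suffices to prove the \emph{unweighted} statement $L_\mathbf 1^2 b_{n-1}(W)=0$.

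To obtain $L_\mathbf 1^2 b_{n-1}(W)=0$, observe that $W$ is a non-uniform lattice in $\mathrm{Isom}(\mathbb H^n)$: it acts properly, with finite stabilizers, on $\mathbb H^n$ with a finite-volume non-compact fundamental domain; equivalently, a torsion-free finite-index subgroup of $W$ is the fundamental group of a complete cusped finite-volume hyperbolic $n$-manifold. By the proportionality principle for $L^2$-Betti numbers of lattices --- equivalently, by the standard computation of the $L^2$-cohomology of a lattice in $O(n,1)$ --- we have $b_k^{(2)}(W)=0$ for all $k\neq n/2$, and since $n\geq 3$ we have $n-1\neq n/2$, so $L_\mathbf 1^2 b_{n-1}(W)=0$. (In the special case where the chamber has exactly one ideal vertex, $L$ is an $(n-1)$-disk and this whole range is immediate from Theorem \ref{thm:singerfordisks} instead.)

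For the range $k\leq 1$, $\Q\geq\mathbf 1$: the case $k=0$ is immediate, since $W$ is infinite gives $W(\Q)\geq W(\mathbf 1)=|W|=\infty$ for $\Q\geq\mathbf 1$, hence $\Q\notin\mathcal R$ and $L_\Q^2 b_0(\Sigma_L)=0$ by Proposition \ref{prop:betti0}. For $k=1$ I would pass to the fattened Davis complex $\fdavis_L$, a homology $n$-manifold with boundary, and analyse $\partial\fdavis_L$ via Proposition \ref{prop:boundaryfdavis}. Because $W$ is quasi-L\'anner, the non-spherical proper special subgroups are exactly the rank-$n$ Euclidean subgroups $W_T$ attached to the ideal vertices of the chamber, so $\mathcal N_P$ equals this set; and any two such $T$ are incomparable, so by Proposition \ref{prop:boundarycomponentsareproducts}(i) the pieces $\mathcal U(W,C_T)$ are pairwise disjoint, whence $\partial\fdavis_L=\bigsqcup_T\mathcal U(W,C_T)$ and $L_\Q^2 b_\ast(\partial\fdavis_L)=\sum_T L_\Q^2 b_\ast(\mathcal U(W,C_T))$. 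By Corollary \ref{cor:l2boundarycomponents}, $L_\Q^2 b_\ast(\mathcal U(W,C_T))=L_\Q^2 b_\ast(\Sigma_{L_T})$, and since $W_T$ is a Euclidean reflection group of rank $n$ acting cocompactly on $\mathbb E^{n-1}$, Theorem \ref{thm:weightedeuclidean} shows that for $\Q\geq\mathbf 1$ the group $L_\Q^2 H_\ast(\Sigma_{L_T})$ is concentrated in degree $n-1$; since $n-1\geq 2$, this gives $L_\Q^2 b_1(\partial\fdavis_L)=0$. Finally I apply Lemma \ref{lemma:h1bdry} with $m=\vcd W=n-1$: since $n-m=1$, part (i) applies once we also know $L_{\Q^{-1}}^2 b_{n-1}(\fdavis_L)=0$; but $L_{\Q^{-1}}^2 b_{n-1}(\fdavis_L)=L_{\Q^{-1}}^2 b_{n-1}(\Sigma_L)$ by Corollary \ref{cor:Hsigma=Hfdavis}, and $\Q\geq\mathbf 1$ forces $\Q^{-1}\leq\mathbf 1$, so this vanishes by the first range. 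Lemma \ref{lemma:h1bdry}(i) then yields $L_\Q^2 b_1(\Sigma_L)=0$, completing the proof.

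I expect the main obstacle to be the single unweighted input $L_\mathbf 1^2 b_{n-1}(W)=0$. The rest is internal to the fattened-Davis-complex machinery, and in fact the two ranges of the statement are exchanged by weighted Poincar\'e duality on $\fdavis_L$ (using that the $L^2_{\Q^{\pm 1}}$-(co)homology of $\partial\fdavis_L$ vanishes below degree $n-1$), so the entire theorem rests on one fact that lies outside that machinery, namely the vanishing of the $L^2$-Betti numbers of a non-uniform hyperbolic lattice away from the middle degree; the hypothesis $n\geq 3$ is exactly what keeps the degrees $1$ and $n-1$ away from $n/2$. A secondary technicality to watch is that when $\dim L=n-1$ one cannot take $P^\ast=\partial\Delta^{|S|-1}$ in the fattening construction, so that case needs either a more careful choice of $P^\ast$ or a direct duality argument exploiting that $W$ is a virtual $PD_n$-pair group.
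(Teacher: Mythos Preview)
Your proof is correct and follows essentially the same route as the paper's: obtain $L_{\mathbf 1}^2 b_{n-1}(W)=0$ from the $L^2$-cohomology of $\mathbb H^n$ (the paper cites Cheeger--Gromov and Dodziuk; you invoke the equivalent proportionality principle for lattices in $O(n,1)$), push down to $\Q\leq\mathbf 1$ with Lemma~\ref{lemma:pushingupcycles}, and for $\Q\geq\mathbf 1$ use the fattened Davis complex together with the fact that every non-spherical proper special subgroup is Euclidean. The one genuine simplification in your write-up is the observation that all $T\in\mathcal N_P$ have the same cardinality $n$ and are therefore pairwise incomparable, so by Proposition~\ref{prop:boundarycomponentsareproducts}(i) the pieces $\mathcal U(W,C_T)$ are disjoint and $\partial\fdavis_L$ is literally their disjoint union; the paper instead routes this through the spectral sequence of Proposition~\ref{prop:specialspectralsequence} (which of course degenerates for exactly this reason). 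Your packaging of the Poincar\'e-duality step via Lemma~\ref{lemma:h1bdry}(i) is the same argument the paper writes out with the long exact sequence of the pair. As for your closing technical worry about $\dim L=n-1$: the paper simply takes $P=\Delta^n$ without further comment, and the proofs in Section~3 go through for $P^\ast=\partial\Delta^n$ since every cell there is a simplex and $L$ is a proper subcomplex (at least one $n$-subset being non-spherical).
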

\begin{proof}
We first suppose that $\Q= \mathbf{1}$. Since $W$ is of type $QL_n$, we can realize a finite volume $n$-simplex in hyperbolic space $\mathbb{H}^n$, with $W$ acting by reflections along codimension-one faces (note that this simplex has some ideal vertices). By a theorem of Cheeger-Gromov \cite{CheegerGromov}, $L_{\mathbf{1}}^2H_k(\Sigma_L)\cong L_{\mathbf{1}}^2\mathcal{H}^k(\mathbb{H}^n)$, where $L_{\mathbf{1}}^2\mathcal{H}^k$ denotes the $L^2$ de Rham cohomology. By a theorem of Dodziuk \cite{Dodziuk}, $L_{\mathbf{1}}^2\mathcal{H}^k(\mathbb{H}^n)=0$ for all $k\geq 0$ if $n$ is odd, and is concentrated in dimension $\frac{n}{2}$ if $n$ is even. In particular, $L_{\mathbf{1}}^2b_{n-1}(\Sigma_L)=0$. The result for $\Q\leq\mathbf{1}$ now follows by Lemma \ref{lemma:pushingupcycles} and the fact that $\vcd W=n-1$.

Now, suppose that $\Q\geq \mathbf{1}$. Consider the fattened Davis complex $\fdavis_L$ with respect to $P=\Delta^n$, the standard $n$-simplex (see Remark \ref{remark:simplexpolytope} and Figure \ref{figure:truncatedtetrahedron}).

\begin{figure}[h]
\[\begin{tikzpicture}[scale=1.6]
\tikzstyle{vertex}=[circle, fill=black, draw, inner sep=0pt, minimum size=1pt]
\tikzstyle{every node}=[draw,circle,fill=white,minimum size=0pt,
                            inner sep=0pt]
\begin{scope}[rotate=-23]

\path[transparent]
    (1,1,1) edge node[pos=0.2] (r1) {} node[pos=0.8] (s1) {} (1,-1,-1)
    (1,1,1) edge node[pos=0.2] (r3) {} node[pos=0.8] (u1) {} (-1,1,-1)
    (1,-1,-1) edge node[pos=0.2] (s2) {} node[pos=0.8] (t2) {} (-1,-1,1)
    (-1,-1,1) edge node[pos=0.2] (t3) {} node[pos=0.8] (u3) {} (-1,1,-1)
    (1,-1,-1) edge node[pos=0.2] (s3) {} node[pos=0.8] (u2) {} (-1,1,-1)
    (1,1,1) edge node[pos=0.2] (r2) {} node[pos=0.8] (t1) {} (-1,-1,1)
    ;

%truncated vertex edges
\path
    (r3) edge (r1)
    (s1) edge (s2)
    (s2) edge (s3)
    (s3) edge (s1)
    (u1) edge (u2)
    (u2) edge (u3)
    (u3) edge (u1)
    (t2) edge (t3)

    ;

\path[gray, dashed]
    (r1) edge (r2)
    (r2) edge (r3)
    (t1) edge (t2)
    (t3) edge (t1)
    ;

%truncated tetrahedron missing edges
\path
    (s3) edge (u2)
    (t3) edge (u3)
    (s2) edge (t2)
    (r3) edge (u1)
    (r1) edge (s1)
;

\path[gray, dashed]
    (r2) edge (t1)
;
\end{scope}
\end{tikzpicture}\]
\caption{:\text{ }$\fchamber$ when $L=K_4(3)$}
\label{figure:truncatedtetrahedron}
\end{figure}

Weighted Poincar\'{e} duality implies that $$L_\Q^2H_1(\fdavis_L,\partial\fdavis_L)\cong L_{\Q^{-1}}^2H_{n-1}(\fdavis_L)\cong L_{\Q^{-1}}^2H_{n-1}(\Sigma_L)=0,$$ so by the long exact sequence for the pair $(\fdavis_L,\partial\fdavis_L)$ it remains to show $L_\Q^2H_1(\partial\fdavis_L)=0$. Proposition \ref{prop:boundaryfdavis} implies that each $C_T$ appearing in $\partial\fchamber$ corresponds to a set $T\in\mathcal{N}_P$ with $W_T$ a Euclidean reflection group. In particular, Corollary \ref{cor:l2boundarycomponents} and Theorem \ref{thm:weightedeuclidean} imply that $L_\Q^2b_1(\mathcal{U}(W,C_T))=0$. Hence the $E_1^{0,1}$ term in the $E_1$ sheet of the spectral sequence of Proposition \ref{prop:specialspectralsequence} is zero. By Proposition \ref{prop:betti0}, the first row of the $E_1$ sheet is also zero, and in particular $E_1^{1,0}$ is zero. Therefore $L_\Q^2b_1(\partial\fdavis_L)=0$.
\end{proof}

Of important note is the case when $W$ is $QL_3$. In this special case, Theorem \ref{thm:l2QL} explicitly computes the $L^2_\Q$-Betti numbers for all $\Q$: they are always concentrated in a single dimension.

\begin{cor}
Suppose that $W$ is of type $QL_3$. Then
\begin{itemize}
  \item If $\Q\in\bar{\mathcal{R}}$, then $L_\Q^2H_\ast(\Sigma_L)$ is concentrated in dimension 0.
  \item If $\Q\notin\mathcal{R}$ and $\Q\leq\mathbf{1}$, then $L_\Q^2H_\ast(\Sigma_L)$ is concentrated in dimension 1.
  \item If $\Q\geq\mathbf{1}$, then $L_\Q^2H_\ast(\Sigma_L)$ is concentrated in dimension 2.
\end{itemize}
\end{cor}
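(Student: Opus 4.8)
The plan is to read off all three cases from the quasi-L\'anner vanishing theorem (Theorem \ref{thm:l2QL}, applied with $n=3$) together with Dymara's computation of $L_\Q^2b_0$ (Proposition \ref{prop:betti0}), after first recording that $\Sigma_L$ is small. For a group of type $QL_3$ the nerve $L$ is the complete graph $K_4$ with no $2$-cells: every triple of generators spans a rank-$3$ Euclidean reflection group and so is non-spherical, while every pair is spherical. Hence $\dim\Sigma_L=2$ and, as noted just before Theorem \ref{thm:l2QL}, $\vcd W=n-1=2$; so $L_\Q^2b_k(\Sigma_L)=0$ for $k\ge 3$ (by dimension, or by Corollary \ref{cor:vanishingabovevcd}), and for every $\Q$ only the degrees $0,1,2$ are available.

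Two of the three cases are then immediate. For $\Q\ge\mathbf 1$, Theorem \ref{thm:l2QL} gives $L_\Q^2b_0(\Sigma_L)=L_\Q^2b_1(\Sigma_L)=0$, so the homology is concentrated in degree $2$. For $\Q\le\mathbf 1$ with $\Q\notin\mathcal R$, Theorem \ref{thm:l2QL} (its clause $k\ge n-1=2$) gives $L_\Q^2b_k(\Sigma_L)=0$ for $k\ge 2$ and Proposition \ref{prop:betti0} gives $L_\Q^2b_0(\Sigma_L)=0$, so the homology is concentrated in degree $1$ (possibly trivial).

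For the case $\Q\in\bar{\mathcal R}$ I would split off the boundary. If $\Q\in\mathcal R$, Proposition \ref{prop:betti0} already yields $L_\Q^2b_0(\Sigma_L)\ne 0$ and $L_\Q^2b_k(\Sigma_L)=0$ for $k>0$, i.e.\ concentration in degree $0$. The only delicate point is $\Q\in\partial\mathcal R$. The quick route is to invoke the computation of \cite{DDJO}, which is carried out for all $\Q\in\bar{\mathcal R}$ and gives concentration in degree $0$. A self-contained alternative: first check $\bar{\mathcal R}\subseteq\{\Q\le\mathbf 1\}$ for a $QL_3$ group --- convergence of $W(\mathbf t)$ forces convergence of every sub-series $W_T(\mathbf t)$, each of the four rank-$3$ Euclidean parabolics $W_T$ has region of convergence contained in the open unit cube, and these four triples cover $S$, so $\mathcal R\subseteq(0,1)^S$; hence $\Q\le\mathbf 1$, Theorem \ref{thm:l2QL} gives $L_\Q^2b_2(\Sigma_L)=0$, and Proposition \ref{prop:betti0} gives $L_\Q^2b_0(\Sigma_L)=0$; finally $L_\Q^2b_1(\Sigma_L)=-\chi_\Q(\Sigma_L)=-1/W(\Q)$ by Proposition \ref{prop:eulerchar}, which, being $\ge 0$, must equal $0$ (equivalently $W(\Q)=+\infty$ on $\partial\mathcal R$). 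Either way $L_\Q^2H_\ast(\Sigma_L)$ is concentrated in degree $0$.

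The one place where care is genuinely needed is this boundary analysis in the third case: the statement is phrased over the closure $\bar{\mathcal R}$, whereas Proposition \ref{prop:betti0} only describes the open region $\mathcal R$. I would settle it either by citing the $\bar{\mathcal R}$-computation of \cite{DDJO} outright, or by supplying the two elementary facts used above --- that an irreducible affine Coxeter group of rank $\ge 3$ has growth series converging only in the open unit cube, and that $W(\mathbf t)$ diverges on $\partial\mathcal R$ (the latter forced here by nonnegativity of $L_\Q^2b_1$ and the Euler characteristic identity). Everything else follows directly from results already established.
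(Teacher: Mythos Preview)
Your approach is essentially the paper's: the corollary is stated there without proof, as an immediate consequence of Theorem~\ref{thm:l2QL} for $n=3$ together with Proposition~\ref{prop:betti0} (and, implicitly, the $\bar{\mathcal R}$ computation of \cite{DDJO}). Your write-up correctly fills in those details.

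Two small points are worth flagging. First, your description of the nerve is not quite right in general: a $QL_3$ group need not have \emph{all} four triples Euclidean --- only the triples corresponding to ideal vertices are Euclidean, while triples corresponding to proper vertices of the simplex are spherical. So $L$ may contain $2$-simplices and $\dim\Sigma_L$ may be $3$, not $2$. This does not damage your argument, since you also invoke $\vcd W=2$ and Corollary~\ref{cor:vanishingabovevcd}, which is the correct justification for $L_\Q^2b_k(\Sigma_L)=0$ when $k\ge 3$; just drop the ``by dimension'' alternative. Second, this same issue undercuts your self-contained treatment of $\Q\in\partial\mathcal R$: the claim that ``these four triples cover $S$'' can fail (a generator corresponding to the facet opposite the unique ideal vertex lies in no Euclidean triple), so the inclusion $\mathcal R\subseteq(0,1)^S$ does not follow from that argument. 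Your primary route --- citing the $\bar{\mathcal R}$ computation of \cite{DDJO} --- is the clean way to handle the closure, and it is what the paper has in mind.
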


Since the $L^2_\Q$-(co)homology is always concentrated in a single dimension, one can use Proposition \ref{prop:eulerchar}, along with Theorem \ref{thm:growthseriesformula}, to obtain explicit formulas for the $L^2_\Q$-Betti numbers.

\subsection{Other $2$-spherical groups.} We now perform computations for other $2$-spherical groups, removing the restriction that the nerve $L$ is a graph. Given a Coxeter system $(W,S)$, we make a particular choice of $P$ for the construction of $\fdavis_L$, namely $P=\Delta^{|S|-1}$, the standard $(|S|-1)$-simplex (see Remark \ref{remark:simplexpolytope}).

While one could argue the following lemma using the spectral sequence, we use a simple Mayer--Vietoris sequence argument to illustrate the technique behind the machinery.

\begin{lemma}
\label{lemma:l2k5}
Suppose that $(W,S)$ is infinite $2$-spherical with $|S|=5$ and $\vcd W\leq 3$. Furthermore, suppose that every infinite special subgroup $W_T$, with $|T|=3,4$, is Euclidean or $QL_3$, and that $L_{\mathbf{1}}^2b_3(\Sigma_L)=0$. Then $L_\Q^2b_\ast(\Sigma_L)$ is concentrated in degree $2$.
\end{lemma}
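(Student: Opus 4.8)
The plan is to realize $\fdavis_L$ using $P=\Delta^{|S|-1}=\Delta^{4}$ as in Remark \ref{remark:simplexpolytope}, so that $\fdavis_L$ is a contractible, locally compact homology $4$-manifold with boundary (Propositions \ref{prop:defretract} and \ref{prop:fdavismnfld}) with $L_\Q^2H_\ast(\fdavis_L)\cong L_\Q^2H_\ast(\Sigma_L)$ (Corollary \ref{cor:Hsigma=Hfdavis}), and then to analyze $\partial\fdavis_L$ via the spectral sequence of Proposition \ref{prop:specialspectralsequence}. Since $W$ is infinite and $\Q\ge\mathbf 1$ we have $\Q\notin\mathcal R$, so $L_\Q^2b_0(\Sigma_L)=0$ by Proposition \ref{prop:betti0}; and $L_\Q^2b_k(\Sigma_L)=0$ for $k>3$ by Corollary \ref{cor:vanishingabovevcd} together with the hypothesis $\vcd W\le 3$. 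It therefore remains to show $L_\Q^2b_1(\Sigma_L)=0$ and $L_\Q^2b_3(\Sigma_L)=0$.

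For the boundary, the key simplification from $|S|=5$ is that, since $(W,S)$ is $2$-spherical, every non-spherical $T\subsetneq S$ has $|T|\in\{3,4\}$; hence $\mathrm{Flag}(\mathcal N_P)$ is at most one-dimensional and the spectral sequence of Proposition \ref{prop:specialspectralsequence} has only the two columns $i=0,1$. By Corollary \ref{cor:l2boundarycomponents} its $E_1$-terms are built from $L_\Q^2H_j(\Sigma_{L_T})$ with $T\in\mathcal N_P$, and each such $W_T$ is Euclidean or of type $QL_3$ by hypothesis. In either case $L_\Q^2b_0(\Sigma_{L_T})=0$ (Proposition \ref{prop:betti0}, as $W_T$ is infinite) and $L_\Q^2b_1(\Sigma_{L_T})=0$ for $\Q\ge\mathbf 1$ (Theorem \ref{thm:weightedeuclidean} in the Euclidean case, where the cohomology concentrates above degree $1$; Theorem \ref{thm:l2QL} in the $QL_3$ case). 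Hence $E_1^{0,0}=E_1^{0,1}=E_1^{1,0}=0$, so that $L_\Q^2b_0(\partial\fdavis_L)=L_\Q^2b_1(\partial\fdavis_L)=0$. Now I would invoke Lemma \ref{lemma:h1bdry} with $n=4$ and $m=\vcd W$: if $m\le 2$ then $n-m\ge 2$ and part (ii) gives $L_\Q^2b_1(\Sigma_L)=0$; if $m=3$ then $n-m=1$ and part (i) applies once we know $L_{\Q^{-1}}^2b_3(\fdavis_L)=L_{\Q^{-1}}^2b_3(\Sigma_L)=0$, which follows from $\Q^{-1}\le\mathbf 1$, $\vcd W=3$, the hypothesis $L_{\mathbf 1}^2b_3(\Sigma_L)=0$, and Lemma \ref{lemma:pushingupcycles}. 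Either way $L_\Q^2b_1(\Sigma_L)=0$.

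It remains to kill degree $3$, and this is where the main obstacle lies; it is vacuous unless $\vcd W=3$, so assume this. Weighted Poincaré--Lefschetz duality on $\fdavis_L$ gives $L_\Q^2H_3(\fdavis_L)\cong L_{\Q^{-1}}^2H_1(\fdavis_L,\partial\fdavis_L)$ with $\Q^{-1}\le\mathbf 1$, and the plan is to show this relative group vanishes by running the long exact sequence of the pair $(\fdavis_L,\partial\fdavis_L)$ in weight $\Q^{-1}$ around degree $1$. Weak exactness reduces this to two facts about the inclusion $\partial\fdavis_L\hookrightarrow\fdavis_L$: that it is weakly surjective on $L_{\Q^{-1}}^2H_1$ and weakly injective on $L_{\Q^{-1}}^2H_0$. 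The second is a connectedness statement; for the first one combines the description of $\partial\fdavis_L$ as a union of the pieces $\mathcal U(W,C_T)$ (Proposition \ref{prop:boundaryfdavis}), each carrying the weighted $L^2$-(co)homology of a Euclidean or $QL_3$ subgroup (Corollary \ref{cor:l2boundarycomponents}), with the companion vanishing $L_{\Q^{-1}}^2b_3(\Sigma_L)=0$ obtained again from the hypothesis via Lemma \ref{lemma:pushingupcycles}, and with weighted Poincaré duality on the boundaryless homology $3$-manifold $\partial\fdavis_L$ (which converts the degree-$(\le 1)$ vanishing above into degree-$(\ge 2)$ vanishing for the parameter $\Q^{-1}$). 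The hard part is carrying out this weak-exactness bookkeeping cleanly — in particular controlling $L_{\Q^{-1}}^2H_1(\Sigma_L)$, the ``dual-parameter'' absolute group that enters the pair sequence. Granting it, $L_\Q^2b_3(\Sigma_L)=0$, and combining with the preceding paragraphs, $L_\Q^2b_\ast(\Sigma_L)$ is concentrated in degree $2$.
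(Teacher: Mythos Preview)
Your argument for $L_\Q^2b_0(\Sigma_L)=L_\Q^2b_1(\Sigma_L)=0$ is essentially the paper's. The only difference is cosmetic: you use the spectral sequence of Proposition~\ref{prop:specialspectralsequence} to kill $L_\Q^2b_1(\partial\fdavis_L)$, whereas the paper writes out the two-piece Mayer--Vietoris sequence for $\partial\fdavis_L=A_3\cup A_4$, where $A_j=\bigsqcup_{T\in\mathcal N_P^{(j)}}\mathcal U(W,C_T)$. The paper explicitly says, in the sentence introducing the lemma, that one could equally well use the spectral sequence and that Mayer--Vietoris was chosen ``to illustrate the technique behind the machinery.'' Your use of Lemma~\ref{lemma:h1bdry} (splitting into $\vcd W\le 2$ and $\vcd W=3$) and of Lemma~\ref{lemma:pushingupcycles} to obtain $L_{\Q^{-1}}^2b_3(\Sigma_L)=0$ from the hypothesis $L_{\mathbf 1}^2b_3(\Sigma_L)=0$ matches the paper exactly.

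Regarding degree $3$: the paper's own proof does not address it. The proof ends after establishing $L_\Q^2b_1(\Sigma_L)=0$ via Lemma~\ref{lemma:h1bdry} and never returns to $b_3$. The lemma is invoked only as the base case of Theorem~\ref{thm:genl2kn}, whose conclusion is merely $L_\Q^2b_k(\Sigma_L)=0$ for $k<2$, so nothing downstream depends on the degree-$3$ part of the statement. When $\vcd W\le 2$ that part is immediate from Corollary~\ref{cor:vanishingabovevcd}; when $\vcd W=3$ neither the paper nor your sketch gives a complete argument, and the route you outline through $L_\Q^2H_3(\fdavis_L)\cong L_{\Q^{-1}}^2H_1(\fdavis_L,\partial\fdavis_L)$ runs straight into the term you yourself flag, namely $L_{\Q^{-1}}^2H_1(\Sigma_L)$ with $\Q^{-1}\le\mathbf 1$, which is not controlled by the hypotheses. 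So your concern is mathematically legitimate, but you should not regard it as an obstacle to matching the paper's proof.
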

\begin{proof}
We wish to reduce the proof to showing that $L_\Q^2b_1(\partial\fdavis_L)=0$. If $\vcd W=2$, then this is accomplished by Lemma \ref{lemma:h1bdry} (ii). If $\vcd W=3$, then according to Lemma \ref{lemma:h1bdry} (i), we reduce the proof to showing $L_\Q^2b_1(\partial\fdavis_L)=0$ if we show that $L_{\Q^{-1}}^2b_3(\Sigma_L)=0$. By Lemma \ref{lemma:pushingupcycles}, we reach this conclusion since by assumption $L_{\mathbf{1}}^2b_3(\Sigma_L)=0$. So, to complete the proof, we must show that $L_\Q^2b_1(\partial\fdavis_L)=0$.

Let $\mathcal{N}_P^{(j)}=\{T\in \mathcal{N}_P\mid\text{Card}(T)=j\}$ and set $$A_j=\bigsqcup_{T\in\mathcal{N}_P^{(j)}}\mathcal{U}(W,C_T).$$

Note that $|S|=5$ and all proper non-spherical subsets $T$ have order $3$ or $4$, so by Proposition \ref{prop:boundaryfdavis}, $\partial\fdavis_L=A_3\cup A_4$. Figure \ref{figure:boundaryk5} illustrates the chamber for $\partial\fdavis_L$ for the case where $L=K_5(3)$.

\begin{figure}[h!]
\[\begin{tikzpicture}
\tikzstyle{vertex}=[circle, draw, inner sep=0pt, minimum size=0pt]
\tikzstyle{every node}=[circle,inner sep=0pt,minimum size=0pt]

	\vertex (v) at (.5,-.5,0) {};
	\vertex (x) at (.5,3,0) {};
	\vertex (y) at (4,-3,.5) {};
    \vertex (z) at (.5,-3,5) {};
    \vertex (w) at (-2.5,-3,-2.5) {};

\path[white]
    (v) edge node[pos=.1] (v1) {} node[pos=.6] (x1) {} (x)
    (v) edge node[pos=.1] (v2) {} node[pos=.75] (y1) {} (y)
    (v) edge node[pos=.1] (v3) {} node[pos=.8] (z1) {} (z)
    (v) edge node[pos=.1] (v4) {} node[pos=.7] (w1) {} (w)
    (x) edge node[pos=.1] (x3) {} node[pos=.95] (y2) {} (y)
    (x) edge node[pos=.1] (x2) {} node[pos=.9] (z2) {} (z)
    (x) edge node[pos=.1] (x4) {} node[pos=.9] (w2) {} (w)
    (y) edge node[pos=.1] (y3) {} node[pos=.92] (z3) {} (z)
    (y) edge node[pos=.1] (y4) {} node[pos=.92] (w3) {} (w)
    (z) edge node[pos=.1] (z4) {} node[pos=.92] (w4) {} (w)
    ;

\path
    (v1) edge node[pos=.2] (v11) {} node[pos=.8] (v21) {} (v2)
    (v1) edge node[pos=.2] (v12) {} node[pos=.8] (v31) {} (v3)
    (v1) edge node[pos=.2] (v13) {} node[pos=.8] (v41) {} (v4)
    (v2) edge node[pos=.2] (v22) {} node[pos=.8] (v32) {} (v3)
    (v2) edge node[pos=.2] (v23) {} node[pos=.8] (v42) {} (v4)
    (v3) edge node[pos=.2] (v33) {} node[pos=.8] (v43) {} (v4)
    ;

\path[draw, line width=1pt, white] (v1) -- (v11);
\path[draw, line width=1pt, white] (v1) -- (v12);
\path[draw, line width=1pt, white] (v1) -- (v13);
\path[draw, line width=1pt, white] (v2) -- (v21);
\path[draw, line width=1pt, white] (v2) -- (v22);
\path[draw, line width=1pt, white] (v2) -- (v23);
\path[draw, line width=1pt, white] (v3) -- (v31);
\path[draw, line width=1pt, white] (v3) -- (v32);
\path[draw, line width=1pt, white] (v3) -- (v33);
\path[draw, line width=1pt, white] (v4) -- (v41);
\path[draw, line width=1pt, white] (v4) -- (v42);
\path[draw, line width=1pt, white] (v4) -- (v43);

\path
    (v11) edge (v12)
    (v12) edge (v13)
    (v13) edge (v11)
    (v21) edge (v22)
    (v22) edge (v23)
    (v23) edge (v21)
    (v31) edge (v32)
    (v32) edge (v33)
    (v33) edge (v31)
    (v41) edge (v42)
    (v42) edge (v43)
    (v43) edge (v41)
    ;

\path
    (x1) edge node[pos=.2] (x11) {} node[pos=.8] (x21) {} (x2)
    (x1) edge node[pos=.2] (x12) {} node[pos=.8] (x31) {} (x3)
    (x1) edge node[pos=.2] (x13) {} node[pos=.8] (x41) {} (x4)
    (x2) edge node[pos=.2] (x22) {} node[pos=.8] (x32) {} (x3)
    (x2) edge node[pos=.2] (x23) {} node[pos=.8] (x42) {} (x4)
    (x3) edge node[pos=.2] (x33) {} node[pos=.8] (x43) {} (x4)
    ;

\path[draw, line width=1pt, white] (x1) -- (x11);
\path[draw, line width=1pt, white] (x1) -- (x12);
\path[draw, line width=1pt, white] (x1) -- (x13);
\path[draw, line width=1pt, white] (x2) -- (x21);
\path[draw, line width=1pt, white] (x2) -- (x22);
\path[draw, line width=1pt, white] (x2) -- (x23);
\path[draw, line width=1pt, white] (x3) -- (x31);
\path[draw, line width=1pt, white] (x3) -- (x32);
\path[draw, line width=1pt, white] (x3) -- (x33);
\path[draw, line width=1pt, white] (x4) -- (x41);
\path[draw, line width=1pt, white] (x4) -- (x42);
\path[draw, line width=1pt, white] (x4) -- (x43);

\path
    (x11) edge (x12)
    (x12) edge (x13)
    (x13) edge (x11)
    (x21) edge (x22)
    (x22) edge (x23)
    (x23) edge (x21)
    (x31) edge (x32)
    (x32) edge (x33)
    (x33) edge (x31)
    (x41) edge (x42)
    (x42) edge (x43)
    (x43) edge (x41)
    ;

%connecting v tetrahedron to x tetrahedron
\path
    (x11) edge (v12)
    (x12) edge (v11)
    (x13) edge (v13)
;

\path
    (y1) edge node[pos=.2] (y11) {} node[pos=.8] (y21) {} (y2)
    (y1) edge node[pos=.2] (y12) {} node[pos=.8] (y31) {} (y3)
    (y1) edge node[pos=.2] (y13) {} node[pos=.8] (y41) {}(y4)
    (y2) edge node[pos=.2] (y22) {} node[pos=.8] (y32) {} (y3)
    (y2) edge node[pos=.2] (y23) {} node[pos=.8] (y42) {} (y4)
    (y3) edge node[pos=.2] (y33) {} node[pos=.8] (y43) {} (y4)
    ;

\path[draw, line width=1pt, white] (y1) -- (y11);
\path[draw, line width=1pt, white] (y1) -- (y12);
\path[draw, line width=1pt, white] (y1) -- (y13);
\path[draw, line width=1pt, white] (y2) -- (y21);
\path[draw, line width=1pt, white] (y2) -- (y22);
\path[draw, line width=1pt, white] (y2) -- (y23);
\path[draw, line width=1pt, white] (y3) -- (y31);
\path[draw, line width=1pt, white] (y3) -- (y32);
\path[draw, line width=1pt, white] (y3) -- (y33);
\path[draw, line width=1pt, white] (y4) -- (y41);
\path[draw, line width=1pt, white] (y4) -- (y42);
\path[draw, line width=1pt, white] (y4) -- (y43);

\path
    (y11) edge (y12)
    (y12) edge (y13)
    (y13) edge (y11)
    (y21) edge (y22)
    (y22) edge (y23)
    (y23) edge (y21)
    (y31) edge (y32)
    (y32) edge (y33)
    (y33) edge (y31)
    (y41) edge (y42)
    (y42) edge (y43)
    (y43) edge (y41)
    ;

%connecting x tetrahedron to y tetrahedron
\path
    (x31) edge [out=-20,in=60] (y21)
    (x32) edge [out=-20,in=60] (y22)
    (x33) edge [out=-20,in=60] (y23)
    ;

%connecting v tetrahedron to y tetrahedron
\path
    (v21) edge (y11)
    (v22) edge (y12)
    (v23) edge (y13)
    ;
\path
    (z1) edge node[pos=.2] (z11) {} node[pos=.8] (z21) {} (z2)
    (z1) edge node[pos=.2] (z12) {} node[pos=.8] (z31) {} (z3)
    (z1) edge node[pos=.2] (z13) {} node[pos=.8] (z41) {} (z4)
    (z2) edge node[pos=.2] (z22) {} node[pos=.8] (z32) {} (z3)
    (z2) edge node[pos=.2] (z23) {} node[pos=.8] (z42) {} (z4)
    (z3) edge node[pos=.2] (z33) {} node[pos=.8] (z43) {} (z4)
    ;

\path[draw, line width=1pt, white] (z1) -- (z11);
\path[draw, line width=1pt, white] (z1) -- (z12);
\path[draw, line width=1pt, white] (z1) -- (z13);
\path[draw, line width=1pt, white] (z2) -- (z21);
\path[draw, line width=1pt, white] (z2) -- (z22);
\path[draw, line width=1pt, white] (z2) -- (z23);
\path[draw, line width=1pt, white] (z3) -- (z31);
\path[draw, line width=1pt, white] (z3) -- (z32);
\path[draw, line width=1pt, white] (z3) -- (z33);
\path[draw, line width=1pt, white] (z4) -- (z41);
\path[draw, line width=1pt, white] (z4) -- (z42);
\path[draw, line width=1pt, white] (z4) -- (z43);

\path
    (z11) edge (z12)
    (z12) edge (z13)
    (z13) edge (z11)
    (z21) edge (z22)
    (z22) edge (z23)
    (z23) edge (z21)
    (z31) edge (z32)
    (z32) edge (z33)
    (z33) edge (z31)
    (z41) edge (z42)
    (z42) edge (z43)
    (z43) edge (z41)
    ;

%connecting y tetrahedron to z tetrahedron
\path
    (y31) edge [out=-100,in=-10] (z31)
    (y32) edge [out=-100,in=-10] (z32)
    (y33) edge [out=-100,in=-10] (z33)
    ;

%connecting x tetrahedron to z tetrahedron
\path
    (x21) edge [out=-130,in=90] (z21)
    (x22) edge [out=-130,in=90] (z22)
    (x23) edge [out=-130,in=90] (z23)
    ;

%connecting v tetrahedron to z tetrahedron
\path
    (v31) edge (z11)
    (v32) edge (z12)
    (v33) edge (z13)
    ;

\path
    (w1) edge node[pos=.2] (w11) {} node[pos=.8] (w21) {} (w2)
    (w1) edge node[pos=.2] (w12) {} node[pos=.8] (w31) {} (w3)
    (w1) edge node[pos=.2] (w13) {} node[pos=.8] (w41) {} (w4)
    (w2) edge node[pos=.2] (w22) {} node[pos=.8] (w32) {} (w3)
    (w2) edge node[pos=.2] (w23) {} node[pos=.8] (w42) {} (w4)
    (w3) edge node[pos=.2] (w33) {} node[pos=.8] (w43) {} (w4)
    ;

\path[draw, line width=1pt, white] (w1) -- (w11);
\path[draw, line width=1pt, white] (w1) -- (w12);
\path[draw, line width=1pt, white] (w1) -- (w13);
\path[draw, line width=1pt, white] (w2) -- (w21);
\path[draw, line width=1pt, white] (w2) -- (w22);
\path[draw, line width=1pt, white] (w2) -- (w23);
\path[draw, line width=1pt, white] (w3) -- (w31);
\path[draw, line width=1pt, white] (w3) -- (w32);
\path[draw, line width=1pt, white] (w3) -- (w33);
\path[draw, line width=1pt, white] (w4) -- (w41);
\path[draw, line width=1pt, white] (w4) -- (w42);
\path[draw, line width=1pt, white] (w4) -- (w43);

\path
    (w11) edge (w12)
    (w12) edge (w13)
    (w13) edge (w11)
    (w21) edge (w22)
    (w22) edge (w23)
    (w23) edge (w21)
    (w31) edge (w32)
    (w32) edge (w33)
    (w33) edge (w31)
    (w41) edge (w42)
    (w42) edge (w43)
    (w43) edge (w41)
    ;

%connecting z tetrahedron to w tetrahedron
\path
    (z41) edge [out=160,in=-150] (w41)
    (z42) edge [out=160,in=-150] (w42)
    (z43) edge [out=160,in=-150] (w43)
    ;

%connecting y tetrahedron to w tetrahedron
\path
    (y41) edge [out=140,in=20] (w31)
    (y42) edge [out=140,in=20] (w32)
    (y43) edge [out=140,in=20] (w33)
    ;

%connecting x tetrahedron to w tetrahedron
\path
    (x41) edge [out=-160,in=100] (w21)
    (x42) edge [out=-160,in=100] (w22)
    (x43) edge [out=-160,in=100] (w23)
    ;
%connecting v tetrahedron to w tetrahedron
\path
    (v41) edge (w11)
    (v42) edge (w12)
    (v43) edge (w13)
    ;

\end{tikzpicture}\]
\caption{: Fundamental chamber for $\partial\fdavis_L$ when $L=K_5(3)$}
\label{figure:boundaryk5}
\end{figure}
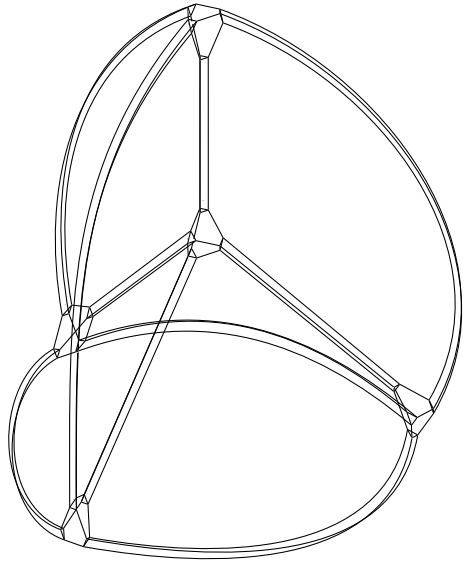

By Proposition \ref{prop:boundarycomponentsareproducts} (i),
\begin{align*}
A_3\cap A_4=\bigsqcup_{\substack{U\in\mathcal{N}_P^{(3)}\\ V\in\mathcal{N}_P^{(4)}\\U\subset V}}\mathcal{U}(W,C_U)\cap\mathcal{U}(W,C_V)
\end{align*}

By Corollary \ref{cor:l2boundarycomponents},
\begin{align*}
L_\Q^2b_k(A_j)&=\sum L_\Q^2b_k(\mathcal{U}(W,C_T))\\
&=\sum L_\Q^2b_k(\fdavis_{L_T})
\end{align*}

and
\begin{align*}
L_\Q^2b_k(A_3\cap A_4)&=\sum L_\Q^2b_k(\mathcal{U}(W,C_T))\\
&=\sum L_\Q^2b_k(\fdavis_{L_T})
\end{align*}

Here $L_T$ is the subcomplex of $L$ corresponding to the infinite subgroup $W_T$, which is either Euclidean or of type $QL_3$. By Theorem \ref{thm:weightedeuclidean} and Theorem \ref{thm:l2QL}, $L_\Q^2b_k(\fdavis_{L_T})=0$ for $k<2$. Hence \[\tag{$\diamond$}L_\Q^2H_k(A_j)=0 \text{ for } j=3,4 \text{ and } L_\Q^2H_k(A_3\cap A_4)=0 \text{ for } k<2. \]

Now, consider the Mayer--Vietoris sequence:
$$\begin{tikzcd}[column sep=small]
\cdot\cdot\cdot \arrow{r} & L_\Q^2H_k(A_3\cap A_4) \arrow{r} & L_\Q^2H_k(A_3)\oplus L_\Q^2H_k(A_4)\arrow{r} & L_\Q^2H_k(\partial\fdavis_L) \arrow{r} & \cdot\cdot\cdot
\end{tikzcd}$$

Inputting $(\diamond)$ into this sequence yields $$L_\Q^2H_k(\partial\fdavis_L)=0\text{ for } k<2.$$

Lemma \ref{lemma:h1bdry} now concludes that $L_\Q^2b_1(\Sigma_L)=0$.
\end{proof}

\begin{theorem}
\label{thm:genl2kn}
Suppose that $(W,S)$ is infinite $2$-spherical with $|S|\geq 5$. Suppose furthermore that:
\begin{enumerate}
  \item For every $T\subseteq S$ with $|T|\geq 5$, $\vcd W_T\leq |T|-2$.
  \item $L_\mathbf{1}^2b_{|S|-2}(\Sigma_L)=0$.
  \item Every infinite subgroup $W_T$, with $|T|=3,4$, is Euclidean or $QL_3$.
\end{enumerate}
Then $L_\Q^2b_k(\Sigma_{L})=0 \text{ for } k<2$.
\end{theorem}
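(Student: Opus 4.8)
The plan is to argue by induction on $|S|$, taking Lemma \ref{lemma:l2k5} as the base case $|S|=5$; indeed, when $|S|=5$ conditions (1)--(3) are exactly the hypotheses of Lemma \ref{lemma:l2k5} (note $|S|-2=3$). Before the induction proper, note that $L_\Q^2b_0(\Sigma_L)=0$: since $W$ is infinite the series $W(\mathbf{1})$ diverges, so $\mathbf{1}\notin\mathcal{R}$, and as $\mathcal{R}$ is closed under replacing a parameter by a smaller one, $\Q\geq\mathbf{1}$ forces $\Q\notin\mathcal{R}$; Proposition \ref{prop:betti0} then gives the claim. So the whole content is to prove $L_\Q^2b_1(\Sigma_L)=0$.

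Realize $\fdavis_L$ with $P=\Delta^{|S|-1}$ as in Remark \ref{remark:simplexpolytope}; by Proposition \ref{prop:fdavismnfld} it is a homology $n$-manifold with boundary for $n=|S|-1$, and condition (1) (with $T=S$) gives $m:=\vcd W\leq|S|-2=n-1$. Now apply Lemma \ref{lemma:h1bdry}. If $n-m\geq 2$, part (ii) shows it suffices to prove $L_\Q^2b_1(\partial\fdavis_L)=0$. If $n-m=1$, i.e.\ $\vcd W=|S|-2$, part (i) shows it suffices to prove both $L_\Q^2b_1(\partial\fdavis_L)=0$ and $L_{\Q^{-1}}^2b_{|S|-2}(\fdavis_L)=0$; the latter equals $L_{\Q^{-1}}^2b_{|S|-2}(\Sigma_L)$ by Corollary \ref{cor:Hsigma=Hfdavis}, and since $\Q^{-1}\leq\mathbf{1}$ and $\vcd W=|S|-2$, Lemma \ref{lemma:pushingupcycles} deduces its vanishing from condition (2). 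In either case the problem collapses to showing $L_\Q^2b_1(\partial\fdavis_L)=0$.

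To handle $\partial\fdavis_L$ I would run the Mayer--Vietoris spectral sequence of Proposition \ref{prop:specialspectralsequence} for the cover $\{\mathcal{U}(W,C_T)\}_{T\in\mathcal{N}_P}$ (equivalently, iterate a Mayer--Vietoris argument over the pieces $A_j=\bigsqcup_{T\in\mathcal{N}_P^{(j)}}\mathcal{U}(W,C_T)$, exactly as in the proof of Lemma \ref{lemma:l2k5}). By Corollary \ref{cor:l2boundarycomponents} the entries of the rows $E_1^{\ast,0}$ and $E_1^{\ast,1}$ are direct sums of terms $L_\Q^2b_0(\Sigma_{L_T})$ and $L_\Q^2b_1(\Sigma_{L_T})$ over non-spherical $T\subsetneq S$, so both rows vanish --- and hence $H_0^W(\partial\fdavis_L)$ and $H_1^W(\partial\fdavis_L)$ vanish --- provided $L_\Q^2b_k(\Sigma_{L_T})=0$ for $k<2$ and every non-spherical $T\subsetneq S$. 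For $|T|\in\{3,4\}$ condition (3) makes $W_T$ Euclidean or quasi-L\'{a}nner, and Theorem \ref{thm:weightedeuclidean} (which, for $\Q\geq\mathbf{1}$, concentrates $L_\Q^2H_\ast(\Sigma_{L_T})$ in a degree $\geq 2$) or Theorem \ref{thm:l2QL} gives the desired vanishing. For $5\leq|T|<|S|$ I would invoke the inductive hypothesis for $W_T$: conditions (1) and (3) are inherited by any special subgroup, and condition (2) for $W_T$, namely $L_{\mathbf{1}}^2b_{|T|-2}(\Sigma_{L_T})=0$, holds automatically by Corollary \ref{cor:vanishingabovevcd} whenever $\vcd W_T<|T|-2$. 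This closes the induction.

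The main obstacle is the residual sub-case just flagged: a proper special subgroup $W_T$ with $|T|\geq 5$ and $\vcd W_T=|T|-2$, for which one genuinely needs $L_{\mathbf{1}}^2b_{|T|-2}(\Sigma_{L_T})=0$; the cleanest remedy is to carry condition (2) through the induction for every $T\subseteq S$ with $|T|\geq 5$ (the stated hypothesis being the case $T=S$), or to check this top-degree vanishing directly for the relevant subgroups. A subsidiary point to verify is that the construction of $\fdavis_L$ is available here, i.e.\ that $\dim L<|S|-1$; this is automatic once $W$ is irreducible and satisfies (1) (affine and L\'{a}nner systems, the potential obstructions, are excluded by (1)), while a reducible $W$ is handled directly via the K\"{u}nneth formula applied to its factors together with Theorem \ref{thm:weightedeuclidean}.
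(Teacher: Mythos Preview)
Your argument follows exactly the paper's route: induction on $|S|$ with Lemma~\ref{lemma:l2k5} as the base case, reduction via Lemma~\ref{lemma:h1bdry} (using Lemma~\ref{lemma:pushingupcycles} and condition~(2) when $\vcd W=|S|-2$) to $L_\Q^2b_1(\partial\fdavis_L)=0$, and then the spectral sequence of Proposition~\ref{prop:specialspectralsequence} together with Corollary~\ref{cor:l2boundarycomponents} and Proposition~\ref{prop:betti0} to kill $E_1^{0,1}$ and $E_1^{1,0}$.

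You are in fact more careful than the paper on the point you flag. The paper simply asserts that ``every non-spherical special subgroup $W_U$, with $4<|U|<|S|$ satisfies the inductive hypothesis'' and proceeds; it does not verify condition~(2) for such $W_U$. So the obstacle you isolate---a proper $W_T$ with $|T|\geq 5$ and $\vcd W_T=|T|-2$, where one genuinely needs $L_{\mathbf{1}}^2b_{|T|-2}(\Sigma_{L_T})=0$---is present in the paper's own proof as well. Your first remedy (require condition~(2) for every $T\subseteq S$ with $|T|\geq 5$) is the natural fix for the induction as written. The paper instead resolves this \emph{a posteriori}: Corollary~\ref{cor:genl2kn} shows, via the coning argument of Lemma~\ref{lemma:l2spherecellulations} combined with Theorem~\ref{thm:singerq=1}, that condition~(2) is redundant given conditions~(1) and~(3), and that same argument supplies the missing vanishing for each proper $W_T$.

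Your subsidiary concern about the availability of $\fdavis_L$ is not an obstacle: the paper takes $P=\Delta^{|S|-1}$ as in Remark~\ref{remark:simplexpolytope} without further comment, and since $W$ is infinite this choice is valid.
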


\begin{proof}
The statement for $L_\Q^2b_0(\Sigma_L)$ follows from Proposition \ref{prop:betti0}. So, we turn our attention to showing $L_\Q^2b_1(\Sigma_L)=0$. The proof of the theorem is now by induction on $|S|$, Lemma \ref{lemma:l2k5} serving as the base case. By Lemma \ref{lemma:pushingupcycles}, since $\vcd W\leq |S|-2$, it follows that $L_{\Q^{-1}}^2b_{|S|-2}(\Sigma_L)=0$. Furthermore, $\fdavis_L$ has dimension $|S|-1$, so by Lemma \ref{lemma:h1bdry} it now suffices to show that $L_\Q^2b_1(\partial\fdavis_L)=0$. By assumption, every non-spherical special subgroup $W_U$ with $|U|=3,4$ is Euclidean or $QL_3$. Thus every non-spherical special subgroup $W_U$, with $4<|U|<|S|$ satisfies the inductive hypothesis. Therefore by induction, Theorem \ref{thm:weightedeuclidean}, and Theorem \ref{thm:l2QL}, for any $T\in\mathcal{N}_P$ we have that $L_\Q^2b_1(\Sigma_{L_T})=0$ (Here $L_T$ is the subcomplex of $L$ corresponding to the special subgroup $W_T$).

Hence by Corollary \ref{cor:l2boundarycomponents} (i), for every $T\in\mathcal{N}_P$

$$L_\Q^2b_1(\mathcal{U}(W,C_T))= L_\Q^2b_1(\Sigma_{L_T})=0.$$

This implies that the $E_1^{0,1}$ term in the $E_1$ sheet of the spectral sequence of Proposition \ref{prop:specialspectralsequence} is zero. By Proposition \ref{prop:betti0}, the first row of the $E_1$ sheet is also zero, and in particular $E_1^{1,0}$ is zero. Therefore $L_\Q^2b_1(\partial\fdavis_L)=0$.
\end{proof}

With the help of Theorem \ref{thm:singerq=1}, we drop condition $2$ in Theorem \ref{thm:genl2kn}.

\begin{cor}
\label{cor:genl2kn}
Suppose that $(W,S)$ is infinite $2$-spherical with $|S|\geq 5$. Suppose furthermore that:
\begin{enumerate}
  \item For every $T\subseteq S$ with $|T|\geq 5$, $\vcd W_T\leq |T|-2$.
  \item Every infinite subgroup $W_T$, with $|T|=3,4$, is Euclidean or $QL_3$.
\end{enumerate}
Then $L_\Q^2b_k(\Sigma_{L})=0 \text{ for } k<2$.
\end{cor}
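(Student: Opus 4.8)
The plan is to deduce the corollary from Theorem~\ref{thm:genl2kn} by checking that hypothesis~(2) of that theorem --- that $L_\mathbf{1}^2b_{|S|-2}(\Sigma_L)=0$ --- is a consequence of hypotheses~(1) and~(2) as stated here. Applying hypothesis~(1) with $T=S$ gives $\vcd W\le |S|-2$. If this inequality is strict, then $L_\mathbf{1}^2b_{|S|-2}(\Sigma_L)=0$ is immediate from Corollary~\ref{cor:vanishingabovevcd}, and we invoke Theorem~\ref{thm:genl2kn} to finish. So the only case that requires genuine work is $\vcd W=|S|-2$.

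In that case the plan is to identify the topological type of the nerve precisely enough to apply Theorem~\ref{thm:singerq=1}. Since $(W,S)$ is $2$-spherical, the $1$-skeleton of $L$ is the complete graph on $S$; combining this with the maximality $\vcd W=|S|-2$ through Davis's description of $\vcd W$ in terms of the reduced cohomology of the full subcomplexes and links of $L$ (\cite[Corollary 8.5.5]{Davis}), the equality severely constrains the combinatorial type of $L$. I would then use hypothesis~(2) of the corollary --- every infinite rank-$3$ or rank-$4$ special subgroup is Euclidean or $QL_3$ --- to eliminate the remaining configurations (the ones in which completeness of $L^{(1)}$ would force some rank-$4$ parabolic to be a L\'anner group), concluding that $L$ is a triangulation of either the $(|S|-3)$-sphere or the $(|S|-3)$-disk; in the latter case $\Sigma_L$ is a homology $(|S|-2)$-manifold with boundary. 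Theorem~\ref{thm:singerq=1}, applied with its parameter ``$n$'' equal to $|S|-2$, then gives $L_\mathbf{1}^2H_k(\Sigma_L)=0$ for all $k\ge |S|-3$; in particular $L_\mathbf{1}^2b_{|S|-2}(\Sigma_L)=0$, which is exactly hypothesis~(2) of Theorem~\ref{thm:genl2kn}. That theorem then yields $L_\Q^2b_k(\Sigma_L)=0$ for $k<2$.

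The main obstacle is the middle step: passing from the numerical equality $\vcd W=|S|-2$ to a statement about the homeomorphism (or homology-manifold) type of $L$. The cohomological characterization of $\vcd$ by itself allows ``hat''-type configurations, where $L$ is a sphere or disk with an extra cone glued along a subcomplex, and the content is showing that such configurations are incompatible with $L^{(1)}$ being complete once every infinite rank-$3$ and rank-$4$ parabolic is required to be Euclidean or $QL_3$ --- in effect a finite check over the relevant low-rank special subgroups. Once $L$ is known to be a sphere or disk, the argument closes with a direct appeal to Theorems~\ref{thm:singerq=1} and~\ref{thm:genl2kn}, requiring no further $L^2$-computation.
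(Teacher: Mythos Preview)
Your reduction to the case $\vcd W=|S|-2$ matches the paper exactly, and your overall plan of feeding hypothesis~(2) of Theorem~\ref{thm:genl2kn} back in is correct. The gap is the middle step: you assert that the equality $\vcd W=|S|-2$, together with completeness of $L^{(1)}$ and the rank-$3$/$4$ constraint, forces $L$ itself to be a sphere or a disk, up to a ``finite check.'' This is not established, and there is no reason to expect it to reduce to a finite check: the hypotheses control only the rank-$3$ and rank-$4$ parabolics, while the simplicial structure of $L$ in higher dimensions depends on which larger subsets of $S$ are spherical, and $|S|$ is unbounded. The cohomological formula for $\vcd$ does not by itself pin down the homeomorphism type of $L$ to this extent, and you have not supplied the promised elimination argument.

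The paper sidesteps this entirely. Rather than analyzing $L$, it \emph{enlarges} $L$ to a triangulated sphere $L'$ by successively performing right-angled cones on empty simplices (first empty $2$-simplices, then empty $3$-simplices, and so on), exactly as in the proof of Lemma~\ref{lemma:l2spherecellulations}. Because $L^{(1)}$ is complete, this process terminates with $L'$ a triangulation of $S^{|S|-2}$, so $\Sigma_{L'}$ is an $(|S|-1)$-manifold and Theorem~\ref{thm:singerq=1} gives $L_\mathbf{1}^2b_{|S|-2}(\Sigma_{L'})=0$. The Mayer--Vietoris bookkeeping from Lemma~\ref{lemma:l2spherecellulations} then shows that each coning step can only increase $L_\mathbf{1}^2b_{|S|-2}$, so $L_\mathbf{1}^2b_{|S|-2}(\Sigma_L)=0$ as well. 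This argument requires no structural information about $L$ beyond the completeness of its $1$-skeleton, which is exactly what $2$-sphericality provides.
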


\begin{proof}
Note that condition $2$ in Theorem \ref{thm:genl2kn} is vacuously satisfied if $\vcd W\leq |S|-3$, so we suppose that $\vcd W=|S|-2$. We must show that $L_\mathbf{1}^2b_{|S|-2}(\Sigma_L)=0$, and to do this, we use an argument analogous to the one in Lemma \ref{lemma:l2spherecellulations}. We first begin by coning empty $2$-simplices of $L$, and then empty $3$-simplices, and so on, until all empty simplices have been coned off. We then label all new edges by $2$. In this way we obtain a newly promoted nerve $L'$ which is a triangulation of $S^{|S|-2}$, and in particular, $\Sigma_{L'}$ is an $(|S|-1)$-manifold. By Theorem \ref{thm:singerq=1}, $L_\mathbf{1}^2b_{|S|-2}(\Sigma_{L'})=0$, and using the arguments of Lemma \ref{lemma:l2spherecellulations}, we can conclude that $L^2_\mathbf{1} b_{|S|-2}(\Sigma_{L})=0$.
\end{proof}

As a corollary to Theorem \ref{thm:genl2kn}, we also obtain a specialized version of Conjecture \ref{conj:Singer} where $n=4$ and $W$ is $2$-spherical.

\begin{cor}
\label{cor:kns3}
Suppose that $(W,S)$ is $2$-spherical with $|S|\geq 6$ and that the nerve $L$ is a triangulation of $S^3$. Furthermore, suppose that every infinite special subgroup $W_T$, with $|T|=3,4$, is Euclidean or $QL_3$. Then $$L_\Q^2b_k(\Sigma_{L})=0 \text{ for } k<2.$$
\end{cor}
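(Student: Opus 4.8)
The plan is to deduce this directly from Corollary \ref{cor:genl2kn}; the whole content of the argument is the verification of its hypotheses. First, $W$ is infinite: a triangulation of $S^3$ is not a simplex, hence not the nerve of a spherical Coxeter group, and $(W,S)$ is $2$-spherical by hypothesis. Also $|S|\geq 6\geq 5$, and condition $(2)$ of Corollary \ref{cor:genl2kn} --- that every infinite $W_T$ with $|T|=3,4$ be Euclidean or of type $QL_3$ --- is exactly the standing assumption. So the only point to check is condition $(1)$: that $\vcd W_T\leq |T|-2$ for every $T\subseteq S$ with $|T|\geq 5$. Granting this, Corollary \ref{cor:genl2kn} yields $L_\Q^2b_k(\Sigma_L)=0$ for $k<2$.

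Since the nerve of a Coxeter system is a flag complex, $L$ is a flag triangulation of $S^3$; in particular $\dim L=3$, so every full subcomplex $L_T$ has dimension at most $3$. As $\Sigma_{L_T}$ is then a contractible complex of dimension at most $4$ carrying a proper $W_T$-action, $\vcd W_T\leq 4$, so condition $(1)$ is automatic when $|T|\geq 6$. The case $|T|=5$, where one needs the sharper bound $\vcd W_T\leq 3$, is the crux.

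Fix $T\subseteq S$ with $|T|=5$; since $|S|\geq 6$, the complex $L_T$ is a \emph{proper} full subcomplex of $L$. Applying to $W_{L_T}$ the description of the virtual cohomological dimension in terms of the integral reduced cohomology of certain subcomplexes of $L_T$ provided by \cite[Corollary 8.5.5]{Davis}, it suffices to show that every such subcomplex $A$ satisfies $\tilde H^k(A;\mathbb Z)=0$ for all $k\geq 3$. Any such $A$ is a subcomplex of $L_T$, hence a \emph{proper} subcomplex of the triangulated $3$-sphere $L$: it omits a vertex $v$ of $L$, and since every vertex of a triangulated closed $3$-manifold lies in a $3$-simplex, $A$ omits a $3$-simplex of $L$. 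Because $L$ has no $4$-simplices, $Z_3(L;\mathbb Z)=H_3(L;\mathbb Z)\cong\mathbb Z$ is generated by the fundamental class, which has full support, so no nonzero $3$-chain supported on $A$ can be a cycle; thus $H_3(A;\mathbb Z)=0$, and $H_k(A;\mathbb Z)=0$ for $k\geq 4$ as well. By Alexander duality in $S^3$, $\tilde H_2(A;\mathbb Z)\cong\tilde H^0(S^3-A;\mathbb Z)$ is free abelian, so the universal coefficient theorem gives $\tilde H^3(A;\mathbb Z)\cong\mathrm{Ext}\big(H_2(A;\mathbb Z),\mathbb Z\big)=0$, while $\tilde H^k(A;\mathbb Z)=0$ for $k\geq 4$ for dimension reasons. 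Hence $\vcd W_T\leq 3=|T|-2$, establishing $(1)$.

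The main obstacle is precisely this last estimate: it is the only place where the hypothesis that $L$ is an honest $3$-sphere, rather than merely a $3$-dimensional complex, is used, and it is what forces $|S|\geq 6$ --- for $|S|=5$ one would have $L=\partial\Delta^4$ and $\vcd W=4>|S|-2$, so $(1)$ fails at $T=S$. Everything else is bookkeeping. As an alternative to citing Corollary \ref{cor:genl2kn}, one may invoke Theorem \ref{thm:genl2kn} directly: since $L$ triangulates $S^3$, Theorem \ref{thm:singerq=1} gives $L_\mathbf{1}^2H_k(\Sigma_L)=0$ for $k\geq 3$, and $|S|-2\geq 4$, so hypothesis $(2)$ of that theorem holds automatically.
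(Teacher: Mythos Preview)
Your proof is correct and proceeds along the same lines as the paper's: verify the hypotheses of Theorem~\ref{thm:genl2kn} (equivalently Corollary~\ref{cor:genl2kn}) and conclude. The paper's own proof is a single sentence --- it records $\vcd W=4$ since $L$ triangulates $S^3$ and asserts that the hypotheses of Theorem~\ref{thm:genl2kn} follow --- whereas you supply the details, most notably the Alexander-duality argument showing $\vcd W_T\leq 3$ when $|T|=5$, a step the paper leaves entirely to the reader.
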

\begin{proof}
Since $L$ is a triangulation of $S^3$, it follows that $\vcd W=4$. In particular, $W$ satisfies the hypothesis of Theorem \ref{thm:genl2kn}.
\end{proof}

\begin{remark}
 Figure \ref{figure:kns3} gives examples of Coxeter diagrams whose corresponding Coxeter system $(W,S)$ has $|S|=6$ and satisfies the hypothesis of Corollary \ref{cor:kns3} (if two vertices are not connected, then the implied label between them is $2$). The author does not know whether there exist examples whenever $|S|\geq 7$.
\end{remark}

\begin{figure}[h]
\[\begin{tikzpicture}
\tikzstyle{vertex}=[circle, draw, inner sep=0pt, minimum size=4pt]
\tikzstyle{every node}=[circle,inner sep=0pt,minimum size=0pt]

\vertex (a) at (0,0) {};
\vertex (b) at (0,1) {};
\vertex (c) at (1,.5) {};
\vertex (d) at (2,.5) {};
\vertex (e) at (3,0) {};
\vertex (f) at (3,1) {};

\path
    (a) edge node[pos=.5,left=0.5pt] (q) {$q$} (b)
    (a) edge node[pos=.5,below=0.5pt] (r) {$r$} (c)
    (b) edge node[pos=.5,above=0.5pt] (s) {$s$} (c)
    (c) edge node[pos=.5,above=0.5pt] (m) {$m$} (d)
    (d) edge node[pos=.5,below=0.5pt] (t) {$t$} (e)
    (d) edge node[pos=.5,above=0.5pt] (u) {$u$} (f)
    (e) edge node[pos=.5,right=0.5pt] (v) {$v$} (f)
;
\end{tikzpicture}\]
$\frac{1}{q} +\frac{1}{r}+\frac{1}{s}=1$, \hskip2mm $\frac{1}{t} +\frac{1}{u}+\frac{1}{v}=1$, \hskip2mm {\small $m=2,3,4$
\vskip2mm
\begin{center}\begin{itemize}
\item If $m=3$, then $s,r,u,t\neq 6$ and either $s,r\neq 4$ or $u,t\neq 4$.
\item If $m=4$, then $s,r,u,t\neq 4,6$.
\end{itemize}\end{center}}
\caption{:\text{ }$2$-spherical Coxeter diagrams satisfying the hypothesis of Corollary \ref{cor:kns3}}
\label{figure:kns3}
\end{figure}

\end{document}